\documentclass{article}
\usepackage{graphicx} 
\usepackage[utf8]{inputenc}
\usepackage{enumitem,amssymb,url}
\usepackage{amsmath,amsxtra,amssymb,amsfonts,latexsym, mathrsfs, dsfont,bm,enumitem, amsthm, mathtools}
\usepackage{enumitem}
\usepackage{color,soul}
\usepackage{cite}
\usepackage{hyperref}
\usepackage{geometry}[margin=1in]

\title{A Whitney Extension Problem for Manifolds}
\author{Kevin O'Neill}

\newtheorem{theorem}{Theorem}[section]

\newtheorem{problem}[theorem]{Problem}

\newtheorem{corollary}[theorem]{Corollary}
\newtheorem{lemma}[theorem]{Lemma}

\theoremstyle{definition}
\newtheorem{definition}{Definition}[section]

\newtheorem{remark}[definition]{Remark}

\theoremstyle{definition}

\newcommand{\N}{\mathbb{N}}
\newcommand{\Z}{\mathbb{Z}}
\newcommand{\Q}{\mathbb{Q}}
\newcommand{\R}{\mathbb{R}}


\newcommand{\dist}{\text{dist}}

\newcommand{\p}{\mathcal{P}}

\renewcommand{\H}{\mathcal{H}}
\renewcommand{\d}{\partial}
\newcommand{\da}{\partial^\alpha}


\newcommand{\cyl}{\mathbf{cyl}}


\renewcommand{\P}{\mathcal{P}}






\newcommand{\kb}{{\bar{k}}}
\newcommand{\eps}{\epsilon}
\newcommand{\one}{\mathbf{1}}

\DeclareMathOperator{\Col}{Col}
\DeclareMathOperator{\Gr}{Gr}
\DeclareMathOperator{\Graff}{Graff}

\newcommand{\M}{\mathcal{M}}

\begin{document}

\maketitle

\begin{abstract}
    The purpose of this paper is to address a manifold-based version of Whitney's extension problem: Given a compact set $E\subset\R^n$, how can we tell if there exists a $d$-dimensional, $C^m$-smooth manifold $\mathcal{M}\supset E$? We provide an answer for compact manifolds with boundary in terms of a Glaeser refinement much like that used in the solution of the classical Whitney extension problem and a topological condition. This condition is the existence of a continuous selection for Grassmannian-valued functions, meant to reflect the collection of possible tangent spaces. We demonstrate the necessity of this condition in general and its non-redundancy in an example, while also showing it need not be checked when $d=1$.
\end{abstract}

\section{Introduction}

Let $C^m(\R^n)$ denote the set of $m$-times continuously differentiable functions from $\R^n$ to $\R$.

The Whitney extension problem \textemdash first posed by Whitney in 1934 \cite{W34-2} \textemdash asks the following:

\begin{problem}[Whitney Extension Problem]\label{prob:WEP}
    Given integers $m,n\ge1$, a compact subset $E\subset \R^n$, and a function $f:E\to\R$, how can we tell if there exists $F\in C^m(\R^n)$ such that $F(x)=f(x)$ for all $x\in E$?
\end{problem}

If such an $F$ exists, we call it an \textit{extension} of $f$.

The case of $n=1$ was solved by Whitney \cite{W34-2}, before Glaeser solved the $m=1$ case \cite{G58}. Fefferman \cite{F06} (see also \cite{F05-J},\cite{F05-Sh}) solved the general case following notable progress by Brudnyi-Shvartsman (see \cite{BS01}, for instance). We provide a summary of Fefferman's solution here before providing specifics in Section \ref{sec:Fefferman solution}.

Let $\P^m(\R^n)$ denote the vector space of real-valued polynomials in $n$ variables and of degree at most $m$. For each $x\in E$, define
\begin{equation}\label{eq:define usual bundle}
    H(x)=\{P\in \P^m(\R^n):P(x)=f(x)\}
\end{equation}
If $f$ has an extension $F$, then clearly $J_xF\in H(x)$ for all $x\in E$, where $J_xF$ is the $m$-th degree Taylor polynomial of $F$ at $x$. (We omit the dependence on $m$ in our notation since $m$ will be fixed.)

For each $x\in E$, determine a new set of polynomials $\tilde{H}(x)$ consisting of the elements of $H(x)$ which are compatible with elements of $H(y)$ according to Taylor's theorem for all $y$ near $x$. The collection $(\tilde{H}(x))_{x\in E}$ is called the \textit{Glaeser refinement} of $(H(x))_{x\in E}$. A key property of Glaeser refinement is that it is computable using only linear algebra and limits. (See Section \ref{sec:Fefferman solution}, also \cite{FI20-book}.) The Glaeser refinement is repeated a finite number of times, terminating in a set $H_*(x)$ for each $x\in E$. The main result of \cite{F06} is that $f$ has an extension if and only if $H_*(x)$ is nonempty for all $x\in E$. We will make use of a vector-valued version of this theorem established in\cite{FL14}.

The purpose of this paper is to address the following variant of the Whitney extension problem, on the topic of manifolds:

\begin{problem}\label{prob:manifold WEP}
    Let $d,m,n\ge1$ be integers satisfying $d<n$. Given a compact subset $E\subset\R^n$, how can we tell if there exists a $d$-dimensional, $C^m$-smooth manifold $\mathcal{M}$ such that $\mathcal{M}\supset E$?
\end{problem}

Problem \ref{prob:manifold WEP} appears to have first been formally stated in the collection of Open Problems connected with the Second Workshop on Whitney Problems in 2009 \cite{OpenProbsList}, although it was left open to determine an exact formulation of the problem. A similar version was also stated in \cite{FI20-book}.

Related questions have been answered in prior work, largely in discrete settings. In \cite{fefferman2016testing}, an algorithm is developed to find a manifold which nearly optimally interpolates a finite set of points. \cite{fefferman2022fitting} also constructs an interpolating manifold under additional assumptions with an improvement in runtime. For work on an intrinsic version of the problem in which one attempts to construct a Riemannian manifold matching a collection of distances, see \cite{Intrinsic1} and \cite{Intrinsic2}. However, the current paper appears to be the first to address the case of infinite $E$. 

For positive integers $d_1,d_2\ge1$, let $\P^m(\R^{d_1},\R^{d_2})$ denote the vector space of $\R^{d_2}$-valued polynomials in $d_1$ variables and of degree at most $m$ and $C^m(\R^{d_1},\R^{d_2})$ denote the set of $m$-times continuously differentiable functions from $\R^{d_1}$ to $\R^{d_2}$. To determine the existence of a $d$-dimensional, $C^m$-smooth manifold containing a set $E$, we assign to each $x\in E$ a subset $H(x)\subset O(n)\times \P^m(\R^d,\R^{n-d})$. As a $d$-dimensional manifold in $\R^n$ is locally the graph of a $C^m(\R^d,\R^{n-d})$ function in some coordinate system, we use the orthogonal group, $O(n)$ to determine an orthonormal coordinate system for $\R^n$. The element $(Q,P)\in O(n)\times \P^m(\R^d,\R^{n-d})$ is meant to represent a possible Taylor polynomial $P$ of such a function in the coordinate system determined by $Q$. An advantage of this approach is that it does not require one to compute any change of coordinates formulas for jets, simply acknowledge that they exist (see Lemma \ref{lemma:jet realization well-defined}).

The sets $H(x)$ are constructed as follows: For each $Q\in O(n)$, determine a coordinate system for $\R^n$. Then view $x$ as plot of a point $(y,f(y))$ in that coordinate system. Following \eqref{eq:define usual bundle}, include in $H(x)$ all pairs $(Q,P)$ where $P\in \P^m(\R^d,\R^{n-d})$ satisfies $P(y)=f(y)$. Do this for all $Q\in O(n)$.

We define a notion of Glaeser refinement for this collection $(H(x))_{x\in E}$ and show in Lemma \ref{lemma:termination} that this refinement terminates in a finite number of steps, leaving us with a set $H_*(x)$ for each $x\in E$. Our main result, Theorem \ref{thm:main}, states that there is a $d$-dimensional, $C^m$-smooth, compact manifold with boundary $\mathcal{M}\supset E$ if and only if $H_*(x)$ is nonempty for all $x\in E$ and a certain topological condition is satisfied. We will refer to such an $\M$ as a \textit{manifold extension} of $E$, specifying the particular qualities of $\M$ (possession of boundary, etc.) when required. Theorem \ref{thm:main} is the result of recognizing and addressing a novel topological obstacle and developing a technical framework for connecting prior work on the Whitney extension problem to the study of manifolds.

We now take a moment to motivate this topological condition with an example. Consider the subset of $\R^4$
\begin{equation*}
    E_1=\{(x_1,x_2,0,0):x_1^2+x_2^2\le 1\} \cup \{(\cos t,\sin t, s\cos (t/2),s\sin (t/2):0\le t\le 2\pi, 0\le s\le 1\}.
\end{equation*}
We ask whether there exists a 3-dimensional, $C^1$ manifold containing $E_1$. To begin, let us determine, for each point $p\in E_1$, whether there exists $\delta>0$ and a 3-dimensional, $C^1$ manifold $\mathcal{M}_p$ such that
\begin{equation*}
    \mathcal{M}_p\cap B(p,\delta)\supset E_1\cap B(p,\delta),
\end{equation*}
where $B(x,r)$ refers to the open ball of radius $r>0$ centered at $x\in \R^n$. We will also take note of some constraints on $T_p\mathcal{M}_p$, where $T_x\mathcal{M}$ refers to the tangent space of the manifold $\mathcal{M}$ at the point $x\in \mathcal{M}$.

If $p$ is of the form $(\cos t_0,\sin t_0,s_0\cos (t_0/2),s_0\sin (t_0/2))$ for fixed $0\le t_0\le 2\pi$ and $0<s_0\le 1$, then such an $\M_p$ exists, with 
\begin{equation*}
    \mathcal{M}_p=\{(r\cos t,r\sin t, s\cos (t/2),s\sin (t/2)):t_0-\delta\le t\le t_0+\delta, 0<s_0-\delta\le s\le s_0+\delta,1-\delta<r<1+\delta\}
\end{equation*}
serving as one possible example.

For any $p$ of the form $(\cos t_0,\sin t_0,0,0)$ we have the manifold
\begin{equation*}
    \mathcal{M}_p=\{(r\cos t,r\sin t, s\cos (t/2),s\sin (t/2)):t_0-\delta\le t\le t_0+\delta, -\delta<s<\delta,1-\delta<r<1+\delta\}.
\end{equation*}
The tangent space $T_p\mathcal{M}_p$ is uniquely determined since
\begin{equation*}
    p+c(1,0,0,0)\in E_1
\end{equation*}
for arbitrarily small values of $c$ and $E_1\cap B(p,\delta)$ contains a two-dimensional subset of $\{(x,y,0,0):x,y\in\R\}$ for all $\delta>0$.

Thus,
\begin{equation*}
    T_p\mathcal{M}_p=p+Span\{(1,0,0,0),(0,1,0,0),(0,0,\cos (t_0/2), \sin (t_0/2))\}
\end{equation*}
for any choice of $\mathcal{M}_p$.

The remaining points of $E_1$ are of the form $p=(x_1,x_2,0,0)$ for $x_1^2+x_2^2<1$. For these points, we may choose
\begin{equation}\label{eq:M_p tangent space}
    p+Span\{(1,0,0,0),(0,1,0,0),(0,0,\cos (\theta/2),\sin (\theta/2))\}
\end{equation}
to be both $\mathcal{M}_p$ and $T_p\mathcal{M}_p$, where $\theta\in[0,2\pi]$ is allowed to be arbitrary. Furthermore, whatever the choice of $\M_p$, $T_p\M_p$ must be of the form \eqref{eq:M_p tangent space} for some $\theta\in[0,2\pi]$ since
\begin{equation*}
    p+c(1,0,0,0),p+c(0,1,0,0)\in E_1
\end{equation*}
for arbitrarily small values of $c$ and a vector space is uniquely determined by an orthonormal basis.

We have shown two things thus far, that for each $p\in E_1$, the intersection of $E_1$ with a neighborhood of $p$ lies in a 3-dimensional, $C^1$ manifold, and that in some cases the tangent space of such a manifold must satisfy certain constraints. 

If there exists a single 3-dimensional, $C^1$ manifold $\mathcal{M}\supset E_1$, then its tangent space must vary continuously. In this example, this means, for each $p=(\cos t,\sin t,0,0)$, we must choose $\theta(p)\in[0,2\pi]$ such that
\begin{equation*}
    (\cos \theta(p),\sin\theta(p))=\pm (\cos (t/2),\sin (t/2)) 
\end{equation*}
and
\begin{equation*}
    \theta:\{(x_1,x_2:x_1^2+x_2^2\le 1\}\to \Gr(2,1) \text{ is continuous},
\end{equation*}
where $\Gr(n,d)$ denotes the Grassmannian of $d$-dimensional planes through the origin in $\R^n$. Here, we identify $\pm (\cos (t/2),\sin (t/2))$ with the line through the origin and $\pm (\cos (t/2),\sin (t/2))$.

However, this implies no such $\M$ exists as the restriction of $\theta(\cdot)$ to $\{(x_1,x_2,0,0):x_1^2+x_2^2=1\}$ (a space homeomorphic to $S^1$) is a non-trivial loop in $\Gr(2,1)$; a continuous extension of $\theta$ to $\{(x_1,x_2,0,0):x_1^2+x_2^2\le 1\}$ would imply a contraction of the loop.

This example may be extended to the case of fitting $d$-dimensional manifolds in $\R^n$ whenever $3\le d<n$ by forcing the tangent spaces at $(\cos t,\sin t,0,\ldots,0)$ to be $\R^2+V(t)$, where $V(t)$ is a nontrivial loop in $\Gr(n-2,d-2)$. A similar argument applies as such Grassmannians are not simply connected.

Interesting phenomena may occur with regards to which particular loops in the Grassmannian are trivial or nontrivial, as seen in the following variation of the above example, due to Shmuel Weinberger. Let $E_1'=\iota(E_1)\subset\R^5$ where $\iota:(x_1,x_2,x_3,x_4)\mapsto (x_1,x_2,x_3,x_4,x_5)$. Then, there is no 3-dimensional, $C^1$-manifold containing $E_1'$, as the loop defined by
\begin{equation}\label{eq:single loop}
    t\mapsto (\cos(t/2),\sin(t/2),0)
\end{equation}
remains a nontrivial loop in $\Gr(3,1)$, which is homeomorphic to $\mathbb{RP}^2$, the two-dimensional real projective space.

The story is different for
\begin{equation*}
    E_2=\{(x_1,x_2,0,0):x_1^2+x_2^2\le 1\} \cup \{(\cos t,\sin t, s\cos t,s\sin t:0\le t\le 2\pi, 0\le s\le 1\}
\end{equation*}
in place of $E_1$. There is no 3-dimensional $C^1$ manifold containing $E_2\subset \R^4$, as
\begin{equation*}
    t\mapsto (\cos t,\sin t)
\end{equation*}
is a nontrivial loop in $\Gr(2,1)$. Nevertheless, there \textit{is} a 3-dimensional, $C^1$ manifold with boundary containing $\iota(E_2)\in \R^5$, as the loop
\begin{equation*}
    t\mapsto (\cos t,\sin t,0)
\end{equation*}
is the loop in \eqref{eq:single loop} traced out twice and the fundamental group of $\mathbb{RP}^2$ is $\Z_2$.

The above examples are in contrast to the usual notion of extension (or interpolation) of smooth functions in Euclidean space, where local extensions (or interpolants) may be pasted together via a partition of unity to form a global extension (or interpolant) without worry of topological obstacle.

We are now ready to somewhat formally state the topological condition in Theorem \ref{thm:main}. At each $x\in E$, let $\hat{H}(x)\subset \Gr(n,d)$ such that $p+\hat{H}(x)$ is the set of possible tangent planes determined by the collections of coordinate frames and Taylor polynomials found in $H_*(x)$. The topological condition is that there exists a continuous $f:E\to \Gr(n,d)$ such that $f(x)\in \hat{H}(x)$ for all $x\in E$. 

The manifold with boundary guaranteed by our main result need not be connected. In the construction, we first paste together of the graphs of $C^m$ functions on open sets, then obtain a compact manifold with boundary by intersecting the result with a smooth hypersurface. To determine the existence of a connected manifold will likely require stronger analysis of global topological issues.

A theme in the solution to the Whitney extension problem is that the answer (yes or no extension) is computable with only linear algebra and the ability to take a limit. The present form of Glaeser refinement and the computation of $\hat{H}(x)$ have this same property, though we do not provide a complete solution regarding the topological problem.

In Section \ref{sec:Fefferman solution}, we provide background on some tools necessary for our analysis as well as the classical Whitney extension problem for $C^m(\R^n,\R^d)$, including a formal definition of the aforementioned Glaeser refinement and the solution given in Theorem \ref{thm:v-valued}. The interested reader is encouraged to consult \cite{F09-Int} and \cite{FI20-book} for more on the topic.

Section \ref{sec:mfld GR} begins with a formal discussion of our version of Glaeser refinement for Problem \ref{prob:manifold WEP}, including a proof that it terminates after a controlled number of iterations. It ends with a statement of our main theorem.

In Section \ref{sec:technical results}, we prove a few technical results that will be useful later on. As a corollary, we deduce that in order to determine the existence of a manifold extension, we need only consider a finite set of $Q\in O(n)$, reducing our computations to, essentially, a finite number of the usual Glaeser refinements (see Theorem \ref{thm:finite number Q's}).

The proof of Theorem \ref{thm:main} begins in Section \ref{sec:local interpolants}, where we use elements of $H_*(x)$ to construct local interpolants (or local manifolds) by direction application of Theorem \ref{thm:v-valued}. These interpolants are carefully chosen to be compatible using our topological condition before being pasted together into a manifold in Section \ref{sec:pasting}. To smooth out the boundary, we intersect the manifold with a generic $C^\infty$ surface. The pasting process is adapted from a similar method in \cite{fefferman2016testing}. This will complete the proof of Theorem \ref{thm:main}.

Next, we demonstrate in Section \ref{sec:topologicla condition} that for $d=1$, one need not check the topological condition to determine the existence of a manifold extension. A key tool here is a selection theorem of Michael \cite{michael1956continuous} which provides sufficient conditions for one to be able to continuously choose a function value from a collection of subsets when the codomain provides topological obstacles. While the $d=1$ case may not require this machinery, the approach will shed light on why the topological condition need be checked in higher dimensions and assist in identifying obstacles for selection in the case $d\ge2$.

As one last remark, we note that in the solution to the classical Whitney extension problem, a finiteness principle solving a discrete version of the problem was first proven, then used to address the continuous case. Here, we jump ahead to the continuous version of the problem since we are able to use the solution to the classical Whitney extension problem as a black box. It may be interesting to see what discrete versions of our results may hold.

\section{Background Material}\label{sec:Fefferman solution}

\subsection{Basic Notation}

We use $C$ and $c$ to denote arbitrary large and small constants, respectively. A constant $C=C(S)$ depends only on the parameters $S$. $C_1,c_1,\ldots$ will denote recurring constants.

Let $N,D\in\N$. If $F:\R^N\to\R^D$ is differentiable, we write its gradient, $\nabla F:\R^N\to\R^{D\times N}$ as the matrix-valued function satisfying
\begin{equation*}
    F(y)=F(x)+\nabla F(x)(y-x)+o(|y-x|),
\end{equation*}
where $(x-y)$ is viewed as a column vector in $\R^N$ in the above matrix-vector multiplication. $\nabla^kF$ is the order $k+1$ tensor consisting of the $k$-th order derivatives of $F$. If $F$ is $C^m$ for all $m\ge0$, then we say $F$ is $C^\infty$.

For a particular $k$-th order derivative, we use the notation $\da F$, where $\alpha=(\alpha_1,\ldots,\alpha_N)$ is a multi-index such that $\alpha_1+\ldots+\alpha_N=k$. Note that $\da F$ is an $\R^D$-valued function.

On $\Gr(n,d)$, introduce the distance function
\begin{equation}\label{eq:Grass dist}
    \dist(W,W')=\|\Pi_W-\Pi_{W'}\| \text{ for }W,W'\in \Gr(n,d),
\end{equation}
where $\Pi_W$ refers to the orthogonal projection of $\R^n$ onto $W$ and $\|\cdot\|$ is any norm on the space of linear operators on $\R^n$.

\begin{lemma}\label{lemma:Grass bases}
    Let $d,n\in\N$. There exist constants $C=C(d,n)$ and $\eps_0=\eps_0(d,n)>0$ such that whenever $W,W'\in \Gr(n,d)$ satisfy $\dist(W,W')\le\epsilon\le \eps_0$, there exists $Q\in O(n)$ such that $QW'=W$ and $\|Q-I_n\|\le C\epsilon$.
\end{lemma}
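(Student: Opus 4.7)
The plan is to construct $Q$ explicitly by assembling two isometries obtained via polar decomposition: one mapping $W'$ onto $W$, the other mapping $W'^\perp$ onto $W^\perp$. The motivation is that when $\dist(W,W')$ is small, $W$ is the graph over $W'$ of a linear map of small norm, so the orthogonal projection $\Pi_W$ restricted to $W'$ is nearly the inclusion, and in particular is a linear isomorphism onto $W$.

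First, I would set $L = \Pi_W|_{W'} : W' \to W$ and observe that for a unit vector $v \in W'$, one has $\Pi_{W^\perp} v = (\Pi_{W^\perp} - \Pi_{W'^\perp})v = -(\Pi_W - \Pi_{W'})v$, since $\Pi_{W'^\perp}v = 0$. Hence $\|\Pi_{W^\perp} v\| \le \epsilon$, and the Pythagorean identity gives $\|Lv\|^2 = 1 - \|\Pi_{W^\perp} v\|^2 \in [1-\epsilon^2, 1]$. This makes $L$ an isomorphism with $\|L - \iota\| \le \epsilon$, where $\iota : W' \hookrightarrow \R^n$ denotes the inclusion. The identical argument produces an isomorphism $L^\perp = \Pi_{W^\perp}|_{W'^\perp} : W'^\perp \to W^\perp$ with $\|L^\perp - \iota^\perp\| \le \epsilon$.

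Next, I would take polar decompositions $L = U|L|$ and $L^\perp = U^\perp |L^\perp|$, yielding isometric isomorphisms $U : W' \to W$ and $U^\perp : W'^\perp \to W^\perp$. The positive operator $|L|^2 = L^*L$ on $W'$ satisfies $\langle (|L|^2 - I_{W'})v, v\rangle = \|Lv\|^2 - \|v\|^2 \in [-\epsilon^2,0]$ for unit $v$, so $\||L|^2 - I_{W'}\| \le \epsilon^2$. A Taylor expansion of $\sqrt{\,\cdot\,}$ around $1$ (valid once $\eps_0$ is chosen small enough that $|L|^2$ lies in a safe neighborhood of $I_{W'}$) then gives $\||L| - I_{W'}\| \le C\epsilon^2$, and analogously for $|L^\perp|$. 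I would then define $Q = U\Pi_{W'} + U^\perp \Pi_{W'^\perp}$. Because $U$ and $U^\perp$ are isometries onto the complementary subspaces $W$ and $W^\perp$, $Q$ is orthogonal, and $QW' = U(W') = W$ by construction.

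Finally, to estimate $\|Q - I_n\|$, for $v = v_1 + v_2 \in W' \oplus W'^\perp$ I would write $(Q - I_n)v = (U - \iota)v_1 + (U^\perp - \iota^\perp)v_2$ and decompose each summand as $U - \iota = (U - L) + (L - \iota)$. Step~1 gives $\|L - \iota\| \le \epsilon$, while $U - L = U(I_{W'} - |L|)$ has norm bounded by $C\epsilon^2$ from Step~2; the same estimates hold on $W'^\perp$. Combining these with $\|v_1\|^2 + \|v_2\|^2 = \|v\|^2$ produces $\|(Q - I_n)v\| \le C\epsilon\|v\|$ as desired. The entire argument is essentially bookkeeping, with no genuine obstacle; the only subtle point is choosing $\eps_0$ small enough to apply the functional calculus for $\sqrt{\,\cdot\,}$, which also automatically ensures $L$ and $L^\perp$ are isomorphisms.
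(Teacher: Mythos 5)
Your proof is correct. The overall strategy coincides with the paper's: both exploit that, when $\dist(W,W')\le\epsilon$, the orthogonal projection onto $W$ carries $W'$ almost isometrically onto $W$ (and likewise $W'^\perp$ onto $W^\perp$), correct this near-isometry to an exact one, and assemble the two pieces into the desired $Q\in O(n)$. The difference is in the correction device: the paper first rotates so that $W'=\mathrm{Span}\{e_1,\ldots,e_d\}$, projects the standard basis vectors onto $W$ and $W^\perp$, and runs Gram--Schmidt with an inductive $C\epsilon$ bookkeeping to obtain orthonormal columns for $Q$; you instead work coordinate-free with the restricted projections $L=\Pi_W|_{W'}$ and $L^\perp=\Pi_{W^\perp}|_{W'^\perp}$ and take their polar factors $U,U^\perp$, defining $Q=U\Pi_{W'}+U^\perp\Pi_{W'^\perp}$. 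Your route avoids both the preliminary reduction and the Gram--Schmidt induction, and it even shows the correction from $L$ to the isometry $U$ costs only $O(\epsilon^2)$, so the loss of the constant $C$ comes purely from $\|L-\iota\|\le\epsilon$; the paper's route is more elementary (no polar decomposition or functional calculus) at the price of more explicit estimates. One cosmetic remark: the lemma allows an arbitrary norm on operators, and both your argument and the paper's implicitly use the operator norm; since all norms on a finite-dimensional space are comparable with constants depending only on $n$, this only affects $C(d,n)$ and is harmless.
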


\begin{proof}
    By a rotation in $\R^n$, we may assume without loss of generality that $W'=Span\{e_1,\ldots,e_k\}$, where $e_1,\ldots,e_n$ is the standard basis for $\R^n$. Define
    \begin{equation*}
        v_i=\Pi_W(e_i), 1\le i\le d.
    \end{equation*}
    By hypothesis,
    \begin{equation}\label{eq:v_i close to e_i}
        \|v_i-e_i\|=\|\Pi_W(e_i)-\Pi_{W'}(e_i)\|\le \dist(W,W')\|e_i\|\le\epsilon
    \end{equation}
    for $1\le i\le d$. We run the Gram-Schmidt process on $\{v_1,\ldots,v_d\}$ to create an orthonormal basis $\{w_1,\ldots,w_d\}$ for $W$.

    First, take $u_1=v_1$. By \eqref{eq:v_i close to e_i}, $\|v_1-e_1\|\le \eps$. Now suppose that for all $1\le i\le k$ (where $k<d$), there exists $C$ such that
    \begin{equation}\label{eq:close basis vector}
        \|u_i-e_i\|\le C\eps.
    \end{equation}
    Note that \eqref{eq:close basis vector} implies
    \begin{equation}\label{eq:u_i norm near 1}
        |\|u_i\|-1|\le C\eps.
    \end{equation}
    The Gram-Schmidt process gives
    \begin{equation*}
        u_{k+1}=v_{k+1}-\sum_{i=1}^k\frac{\langle v_{k+1},u_i\rangle}{\langle u_i,u_i\rangle}u_i,
    \end{equation*}
    and one sees by \eqref{eq:v_i close to e_i} that
    \begin{align*}
        \|u_{k+1}-e_{k+1}\|&\le \|u_{k+1}-v_{k+1}\|+\|v_{k+1}-e_{k+1}\|\\
        &\le kC\eps+C\eps\le C\eps.
    \end{align*}
    Define $w_i=\frac{u_i}{\|u_i\|}$ to create an orthonormal basis and observe $\|w_i-e_i\|\le C\eps$ by \eqref{eq:close basis vector} and \eqref{eq:u_i norm near 1}.

    Repeating this process for the orthogonal complements of $W$ and $W'$, we find an orthonormal basis $\{v_{d+1},\ldots,v_n\}$ for $W^\perp$ such that
    \begin{equation}\label{eq:final basis close}
        \|v_i-e_i\|<C\eps \text{ for }d+1\le i\le n.
    \end{equation}

    Form an orthogonal matrix $Q$ with columns $v_1,\ldots, v_n$. Then $QW'=W$ and $\|Q-I_n\|\le C\eps$ by \eqref{eq:final basis close}.    
\end{proof}

Let $Q\in O(n)$ such that the columns of $Q$ are $v_1,\ldots,v_n$. (Here and in the sequel, we identify an orthogonal transformation with its matrix representation with respect to the standard ordered basis of $\R^n$.) Define $\Pi:O(n)\to \Gr(n,d)$ by
\begin{equation}\label{eq:define Pi}
    \Pi(Q)=Span\{v_1,\ldots,v_d\}.
\end{equation}
The dimension $d$ will be fixed, thus given by context.

The affine Grassmannian $\Graff(n,d)$ consists of all $d$-planes in $\R^n$ (containing the origin or not). It has a natural topology under which i) the map $\mathcal{M}\to \Graff(n,d)$ by $x\mapsto T_x\mathcal{M}$ is continuous for any $d$-dimensional $C^1$ manifold $\mathcal{M}\subset\R^n$ and ii) the natural projection $p:\Graff(n,d)\to \Gr(n,d)$ by
\begin{equation}\label{eq:natural projection}
    p(x+W)=W
\end{equation}
for $x\in\R^n$, $W\in \Gr(n,d)$ is continuous.

\subsection{Manifolds and Transversality}

Given $x\in\R^n$, $\delta>0$, we let $B(x,\delta)$ denote the open ball of radius $\delta$ with center $x$. If $A\subset\R^n$ is closed, define $\dist(x,A)=\inf_{y\in A} |x-y|$.

We say $\M\subset\R^n$ is a $d$-dimensional, $C^m$ manifold if for all $x\in \M$, there exists $\delta>0$ such that $B(x,\delta)\cap \M$ is the graph of a $C^m(\R^d,\R^{n-d})$ function in some coordinate system, or equivalently, if there exists $\delta>0$ such that $B(x,\delta)\cap \M=F(U)$ for some open $U\subset \R^d$ and injective $C^m$ function $F:U\to\M$. A manifold need not be closed as a subset of $\R^n$.

We say $\M\subset\R^n$ is a $d$-dimensional, $C^m$ manifold with boundary if for all $x\in \M$, there exists $\delta>0$ such that $\M=F(U)$ for some injective $C^m$ function $F:U\to\M$ where $U$ is either an open subset of $\R^d$ or a (relatively) open subset of $\R^d_+:=\{(x_1,\ldots,x_d)\in\R^d:x_d\ge0\}$. If $\M$ is a closed subset of $\R^n$, we refer to it as a compact manifold with boundary.

If $\mathcal{M}$ is a $C^1$ manifold and $x\in \M$, we denote the tangent space of $\M$ at $x$ by $T_x\M$. If $\M\subset \R^N\times \R^D$ is the graph of $F\in C^1(\R^N,\R^D)$, then
\begin{equation}\label{eq:tangent space of graph}
    T_{(x,F(x))}\M=(x,F(x))+\Col\left(\begin{bmatrix}
    I_N\\
    \nabla F(x)
\end{bmatrix}\right),
\end{equation}
where $I_N$ is the $N\times N$ identity matrix and $\Col(A)$ refers to the column space of a matrix $A$.

We say two submanifolds $\mathcal{M}$ and $\mathcal{N}$ of $\R^n$ \textit{intersect transversely} if for all $z\in \M\cap\mathcal{N}$, $T_z\M+T_z\mathcal{N}=\R^n$, where the addition is the standard sum of vector subspaces.

The following is a standard result on the transverse intersection of manifolds (see Theorem 6.30b) in \cite{lee2012smooth}, for instance, for a slightly more general result).

\begin{theorem}\label{thm:transverse intersection is manifold}
    Suppose $\mathcal{M}$ and $\mathcal{N}$ are $C^\infty$ submanifolds of $\R^n$. If $\mathcal{M}$ and $\mathcal{N}$ intersect transversely, then $\mathcal{M}\cap \mathcal{N}$ is a $C^\infty$ manifold.
\end{theorem}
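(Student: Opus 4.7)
The plan is to reduce the statement to the implicit function theorem applied locally at each point of $\M\cap\mathcal{N}$. Fix $z\in\M\cap\mathcal{N}$, set $k=\dim\M$ and $\ell=\dim\mathcal{N}$, and first write each manifold locally as a regular level set: choose a neighborhood $U$ of $z$ and a $C^\infty$ submersion $f:U\to\R^{n-k}$ with $\M\cap U=f^{-1}(0)$, and likewise $g:V\to\R^{n-\ell}$ with $\mathcal{N}\cap V=g^{-1}(0)$. (That embedded $C^\infty$ submanifolds admit such local defining submersions is standard.) Combine them into $F=(f,g):U\cap V\to\R^{2n-k-\ell}$ so that $F^{-1}(0)=(\M\cap\mathcal{N})\cap(U\cap V)$.

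The essential step, and the only place the transversality hypothesis is used, is to show that $\nabla F(z)$ is surjective. Since $\Ker(\nabla f(z))=T_z\M$ and $\Ker(\nabla g(z))=T_z\mathcal{N}$, the block form of $\nabla F(z)$ gives $\Ker(\nabla F(z))=T_z\M\cap T_z\mathcal{N}$. Transversality supplies $T_z\M+T_z\mathcal{N}=\R^n$, whence by the standard dimension formula
\begin{equation*}
    \dim(T_z\M\cap T_z\mathcal{N})=\dim T_z\M+\dim T_z\mathcal{N}-n=k+\ell-n.
\end{equation*}
Rank-nullity then yields $\operatorname{rank}(\nabla F(z))=n-(k+\ell-n)=2n-k-\ell$, which matches the dimension of the codomain of $F$, so $\nabla F(z)$ is surjective.

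With surjectivity in hand, the implicit function theorem (after possibly shrinking $U\cap V$) expresses $F^{-1}(0)$ as the graph of a $C^\infty$ function on an open subset of $\R^{k+\ell-n}$, exhibiting $\M\cap\mathcal{N}$ near $z$ as a $C^\infty$ submanifold of dimension $k+\ell-n$. Since this holds at every $z\in\M\cap\mathcal{N}$, the intersection is a $C^\infty$ manifold, completing the argument. I expect no real obstacle beyond the rank computation above: existence of local defining submersions and the appeal to the implicit function theorem are both routine, so the entire proof hinges on correctly identifying $\Ker(\nabla F(z))$ and feeding the transversality condition into the dimension formula.
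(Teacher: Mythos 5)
Your argument is correct: writing each submanifold locally as the zero set of a $C^\infty$ submersion, observing $\Ker(\nabla F(z))=T_z\M\cap T_z\mathcal{N}$, and using transversality plus rank-nullity to get surjectivity of $\nabla F(z)$ before invoking the implicit function theorem is exactly the standard proof of this fact. The paper itself offers no proof, citing Theorem 6.30(b) of Lee's book, and your reasoning is essentially the argument given there (regular level set of the combined defining map), so there is nothing to flag; the only unstated case, an empty intersection or $k+\ell<n$, is vacuous since transversality at an actual intersection point already forces $k+\ell\ge n$.
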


The following is a standard application of Thom's transversality theorem \cite{thom} (see Example 3.1 of \cite{greenblatt2015introduction}):

\begin{theorem}\label{thm:a.e. is transverse}
    Let $\mathcal{M}$ and $\mathcal{N}$ be $C^\infty$ submanifolds of $\R^n$. Then, for almost every $v\in\R^n$, $v+\M$ intersects $\mathcal{N}$ transversely.
\end{theorem}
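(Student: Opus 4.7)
The plan is to deduce the statement as a direct instance of the parametric transversality theorem, which is the standard packaging of Thom's transversality theorem for use in exactly this setting. I will introduce the translation family as a smooth map of two variables and observe that, because translation is a diffeomorphism in the parameter, the joint map is automatically a submersion. Parametric transversality then furnishes a measure-zero exceptional set of translation parameters, which is exactly what the theorem claims.

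More concretely, I would first define the smooth map
\begin{equation*}
    F\colon \M\times\R^n\to\R^n,\qquad F(x,v)=x+v.
\end{equation*}
For each fixed $x\in\M$ the partial map $v\mapsto F(x,v)$ is a translation, hence a diffeomorphism of $\R^n$, so the derivative $DF(x,v)$ is surjective at every point. In particular $F$ is a submersion, and any submersion into $\R^n$ is automatically transverse to the $C^\infty$ submanifold $\mathcal{N}\subset\R^n$.

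Next I invoke the parametric transversality theorem (a standard corollary of Thom's transversality theorem; see e.g.\ the reference \cite{greenblatt2015introduction} already cited): if $F\colon X\times V\to Y$ is $C^\infty$ and transverse to a $C^\infty$ submanifold $Z\subset Y$, then for almost every $v\in V$ the slice map $F_v\colon X\to Y$, $F_v(x)=F(x,v)$, is transverse to $Z$. Applied with $X=\M$, $V=\R^n$, $Y=\R^n$, $Z=\mathcal{N}$, this yields that for almost every $v\in\R^n$ the map $x\mapsto x+v$ is transverse to $\mathcal{N}$. Since $v+\M$ is exactly the image of this embedding and its tangent space at $x+v$ is the translate of $T_x\M$, transversality of the map is equivalent to the geometric transversality $T_{x+v}(v+\M)+T_{x+v}\mathcal{N}=\R^n$ at every point $x+v\in(v+\M)\cap\mathcal{N}$, which is the conclusion.

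The only step requiring real care is the passage from parametric transversality to \emph{Lebesgue-almost every} $v$: in the reference formulations the exceptional set is typically described as meager or of measure zero in an open subset of the parameter manifold. Since $V=\R^n$ here and the statement is local in $v$, it suffices to exhaust $\M$ by countably many relatively compact open pieces and apply parametric transversality on each, using that a countable union of measure-zero sets in $\R^n$ is still measure zero. Under the standing $C^\infty$ hypothesis this is immediate, and the conclusion follows.
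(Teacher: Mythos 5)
The paper does not prove this statement; it simply cites it as a standard application of Thom's transversality theorem (Example 3.1 of \cite{greenblatt2015introduction}), and your argument is exactly that standard parametric-transversality proof: the translation family $F(x,v)=x+v$ is a submersion, hence transverse to $\mathcal{N}$, and the parametric transversality theorem (Sard applied to the projection of $F^{-1}(\mathcal{N})$ onto the parameter) gives a Lebesgue-null exceptional set of $v$. Your proof is correct; the final exhaustion of $\M$ by relatively compact pieces is a harmless extra precaution, since Sard's theorem already yields a measure-zero exceptional set for any second-countable $\M$.
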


\subsection{Solution to the Classical Whitney Extension Problem}

Fix integers $D,m,N\ge 1$. Let $\p^m(\R^N,\R^D)=\p^m(\R^N)\oplus\ldots\oplus\p^m(\R^N)$ denote the vector space of $\R^D$ valued-valued polynomials in $N$ real variables and of degree at most $m$. If $F\in C^m(\R^N,\R^D)$, we denote the $m$-th degree Taylor polynomial of $F$ by $J_xF$. Note that $J_xF\in \p^m(\R^N,\R^D)$. We call $J_xF$ the \textit{jet} of $F$ at $x$.

Given a compact set $E\subset\R^N$, we define a \textit{bundle} $\H=(H(x))_{x\in E}$ to be a collection of affine subspaces $H(x)\subset \p^m(\R^N,\R^D)$ (called \textit{fibers}). We say $F\in C^m(\R^N,\R^D)$ is a \textit{section} of $\H$ if $J_xF\in H(x)$ for all $x\in E$.

If $(H(x))_{x\in E}$ is a bundle, we define a second bundle, called the Glaeser refinement of $(H(x))_{x\in E}$ and denoted $\tilde{\H}=(\tilde{H}(x)))_{x\in E}$, as follows:

For all $x_0\in E$, $\tilde{H}(x_0)$ consists of the elements $P_0=(P_0^1,\ldots,P_0^D)\in H(x_0)$ such that for all $\eps>0$, there exists $\delta>0$ such that for all $x_1,\ldots,x_{\kb}\in E\cap B(x_0,\delta)$ there exist $P_i=(P_i^1,\ldots,P_i^D)\in H(x_i)$ ($1\le i\le \kb)$ satisfying
\begin{equation*}
    |\da (P_i^l-P_j^l)(x_i)|\le \eps|x_i-x_j|^{m-|\alpha|}
\end{equation*}
for all $0\le i,j\le \kb, 1\le l\le D$, and $|\alpha|\le m$. Here, $\kb=\kb(m,N,D)$ is a positive integer. (It is elementary to verify that each fiber $\tilde{H}(x)$ is an affine space so $\tilde{\H}$ is indeed a bundle by definition.) Let $(H_1(x))_{x\in E},(H_2(x))_{x\in E},\ldots$ denote the iterated Glaeser refinements of $(H_0(x))_{x\in E}=(H(x))_{x\in E}$.

An important property of Glaeser refinement is that it is computable with only linear algebra and the ability to take a limit. To see this, fix $x_0\in E, P_0\in H(x_0)$ and for $x_1,\ldots,x_\kb\in E$ define
\begin{equation*}
    \mathcal{Q}(P_1,\ldots,P_{\kb};x_1,\ldots,x_{\kb})=\sum_{\substack{0\le i,j\le \kb\\ x_i\neq x_j}}\sum_{|\alpha|\le m} \sum_{1\le l\le D}\left[\frac{\da(P^l_i-P^l_j)(x_i)}{|x_i-x_j|^{m-|\alpha|}}\right]^2.
\end{equation*}

For fixed $x_1,\ldots,x_\kb$, $\mathcal{Q}(P_1,\ldots,P_{\kb};x_1,\ldots,x_{\kb})$ is a positive semidefinite quadratic form. Since each $H(x_i)$ is an affine space,
\begin{equation*}
    \mathcal{Q}_{\min}(x_1,\ldots,x_{\kb})=\min\{\mathcal{Q}(P_1,\ldots,P_{\kb};x_1,\ldots,x_{\kb}):P_i\in H(x_i), 1\le i\le \kb\}
\end{equation*}
may be computed with linear algebra. We see that $P_0\in \tilde{H}(x_0)$ if and only if $\mathcal{Q}_{\min}(x_1,\ldots,x_{\kb})\to 0$ as $(x_1,\ldots,x_{\kb})\to (x_0,\ldots,x_0)$ in $E^{\kb}$.

We note that $(\tilde{H}(x))_{x\in E}$ has a section if and only if $(H(x))_{x\in E}$ has a section. If $(H(x))_{x\in E}$ has a section $F$, then one may take $P_i=J_{x_i}F$ in the definition of Glaeser refinement to show $J_xF\in \tilde{H}(x)$ for all $x\in E$. The reverse direction is trivial since $\tilde{H}(x)\subset H(x)$ for all $x\in E$.

We say $(H(x))_{x\in E}$ is \textit{Glaeser-stable} if it is equal to its Glaeser refinement. The following lemma was proven in \cite{FL14} and adapted from the corresponding lemma in \cite{F06}, which in turn was adapted from \cite{G58} and \cite{BMP03}.

\begin{lemma}\label{lemma:old termination}
    Let $E\subset\R^N$ be compact and $(H(x))_{x\in E}$ be a bundle. Then, there exists $l_*=l_*(m,N,D)$ such that $H_k(x)=H_{l_*}(x)$ for all $k\ge l_*$. In particular, $(H_{l_*}(x))_{x\in E}$ is Glaeser-stable.
\end{lemma}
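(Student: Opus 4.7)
The proof strategy rests on the observation that each fiber $H_l(x)$ is an affine subspace of $\mathcal{P}^m(\R^N, \R^D)$, whose ambient dimension is bounded by $D_0 := D \binom{N+m}{N}$. Since $H_{l+1}(x) \subseteq H_l(x)$ with both affine, the inclusion is proper only when $\dim H_{l+1}(x) < \dim H_l(x)$ (with the convention $\dim \varnothing = -1$). Consequently the pointwise sequence $(\dim H_l(x))_l$ is non-increasing in $\{-1, 0, 1, \ldots, D_0\}$ and stabilizes for each $x$; the task is to upgrade this pointwise stabilization to uniform termination after $l_* = l_*(m, N, D)$ steps, independent of $E$.

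My first step would be to establish upper semicontinuity of $d_l(x) := \dim H_l(x)$ on $E$. Using the $\mathcal{Q}_{\min}$ characterization of Glaeser refinement described in the exposition, one shows that whenever $y_j \to x_0$ in $E$ and $P_j \in H_l(y_j)$ with $P_j \to P^*$ in $\mathcal{P}^m(\R^N, \R^D)$, we have $P^* \in H_l(x_0)$. A routine linear-algebra argument then gives that $E_{l, k} := \{x \in E : d_l(x) \geq k\}$ is closed in $E$ for every $k$, so the dimension stratification of $E$ consists of nested closed sets that weakly decrease under further refinement.

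My second step would be a Noetherian descent on a finite combinatorial invariant. I would attach to each bundle $\mathcal{H}$ in the iterated sequence a quantity $\Phi(\mathcal{H})$ taking values in a well-ordered set $W$ of cardinality bounded only in terms of $(m, N, D)$. Following the template of \cite{G58}, \cite{BMP03}, and \cite{F06}, a natural choice of $\Phi$ is a lexicographically ordered tuple recording topological or dimension-theoretic invariants of the stratification $(E_{l, D_0}, E_{l, D_0-1}, \ldots, E_{l, 0})$. Verification that (i) proper Glaeser refinement strictly decreases $\Phi$ and (ii) $\Phi$ takes at most $C(m, N, D)$ distinct values then forces the refinement to stabilize after at most $l_*(m, N, D)$ steps.

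The main obstacle is (i): translating a pointwise strict decrease $\dim H_{l+1}(x_0) < \dim H_l(x_0)$ at a single $x_0$ into a strict global drop in $\Phi$. Here upper semicontinuity of $d_l$ together with compactness of $E$ is indispensable, ensuring that the topmost stratum at which the dimension actually drops is closed and its complexity can be controlled from above. I would follow the cited template, making the routine modifications needed to accommodate the vector-valued jets treated in \cite{FL14}.
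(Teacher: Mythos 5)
First, note that the paper does not prove this lemma at all: it is imported from \cite{FL14} (adapted from \cite{F06}, \cite{G58}, \cite{BMP03}), and the closest in-paper argument is the proof of the Termination Lemma for M-bundles (Lemma \ref{lemma:termination}), which records exactly three properties \textemdash locality of refinement, the inequality $\dim \tilde{H}(x)\le \liminf_{y\to x,\,y\in E}\dim H(y)$, and stability of full fibers \textemdash and then defers to the standard argument in \cite{FI20-book}. Your proposal is not that argument, and its key step is false. The claimed upper semicontinuity of $d_l(x)=\dim H_l(x)$ (equivalently, closedness of $E_{l,k}=\{x: d_l(x)\ge k\}$) does not hold: the fibers of the initial bundle are completely arbitrary affine spaces with no relation between different points, and since $H_l(x)\subset H_0(x)$ this defect persists under refinement. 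Concretely, take $E=\{0\}\cup\{1/j:j\ge 1\}\subset\R$, $H_0(1/j)=\{P:P(1/j)=0\}$ and $H_0(0)=\{P:P(0)=1\}$; then $H_1(1/j)=H_0(1/j)$ has dimension $D_0-D\ge 0$ while $H_1(0)=\varnothing$, so $\{x:d_1(x)\ge 0\}$ is not closed and no limit-of-jets closedness of the graph of $H_1$ holds. The correct substitute is the displayed liminf inequality above, which compares the \emph{next} refinement at $x$ with the \emph{current} fibers at nearby $y$; it is neither upper nor lower semicontinuity of a fixed $d_l$, and your subsequent steps lean on the false version.

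The second step is also not substantiated, and I do not believe it can be made to work in the form proposed. A proper refinement forces a dimension drop only at some single point $x_0$, and since the strata $E_{l,k}$ are in general arbitrary (non-closed) subsets of $E$, there is no evident invariant $\Phi$ of the stratification that takes boundedly many values in terms of $(m,N,D)$ and strictly decreases at every non-stable step; for instance the refinement can be proper at every step $1,\dots,l_*$ while $\max_x d_l(x)$ and the coarse structure of the strata never change (drops can cascade through different points at different times). The actual template you cite does something different: using locality, the liminf inequality, and the fact that full fibers are stable, one proves by induction on $j$ a statement of the shape ``if $\dim H_{2j+1}(x)$ exceeds $D_0-j$ then the bundle has already stabilized in a neighborhood of $x$,'' which yields the uniform bound $l_*\approx 2D_0+1$ with $D_0=\dim\p^m(\R^N,\R^D)$, without ever exhibiting a globally decreasing well-ordered invariant. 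To repair your write-up you would need to replace Step 1 by the liminf inequality (proved directly from the definition or from the $\mathcal{Q}_{\min}$ characterization) and replace the Noetherian descent of Step 2 by this neighborhood-stabilization induction.
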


Given $f:E\to \R^D$, define
\begin{equation*}
    H(x)=\{P\in\p^m(\R^N,\R^D):P(x)=f(x)\}.
\end{equation*}

The question of whether there exists $F\in C^m(\R^N,\R^D)$ such that $F(x)=f(x)$ for all $x\in E$ is equivalent to whether $(H(x))_{x\in E}$ has a section. By Lemma \ref{lemma:old termination}, this is the case if and only if $(H_{l_*}(x))_{x\in E}$ has a section. This question is answered by the following theorem from \cite{FL14}.

\begin{theorem}\label{thm:v-valued}
    Let $E\subset\R^N$ and $(H(x))_{x\in E}$ be a Glaeser-stable bundle such that each fiber $H(x)$ is nonempty. Then $(H(x))_{x\in E}$ has a section.

    Furthermore, given $x_0\in E$ and $P\in H(x_0)$, one may choose $F$ such that $J_{x_0}F=P$.
\end{theorem}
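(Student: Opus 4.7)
The plan is to follow the approach of Fefferman-Luli \cite{FL14}, which adapts Fefferman's solution of the classical Whitney extension problem to the vector-valued bundle setting. The strategy has three main parts: (i) a finiteness principle reducing global existence of a section to existence of sections over bounded-size finite subsets; (ii) verification of the finite-subset hypothesis using the Glaeser-stability assumption; and (iii) a modification of the bundle at $x_0$ to force $J_{x_0} F = P$.

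For step (i), I would establish a finiteness principle: there exists $\ksh = \ksh(m, N, D)$ such that $(H(x))_{x \in E}$ admits a section provided that for every $S \subset E$ with $\abs{S} \le \ksh$, the restricted bundle $(H(x))_{x \in S}$ admits a section $F_S$ with $C^m$-norm bounded by a constant depending only on $(m, N, D)$. The proof uses a Whitney/Calder\'on-Zygmund decomposition of $\R^N \setminus E$ into cubes, associates to each cube a local polynomial approximant obtained from the finite-subset hypothesis, and assembles these via a smooth partition of unity subordinate to the decomposition. An Arzel\`a-Ascoli extraction along a refining sequence $S_1 \subset S_2 \subset \ldots$ exhausting $E$ produces a global $C^m$ limit; compactness on each finite-dimensional fiber $H(x) \subset \P^m(\R^N, \R^D)$ forces its jet at each $x$ to lie in $H(x)$.

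For step (ii), I would exploit Glaeser stability iteratively. Given a finite $S = \{x_1, \ldots, x_k\} \subset E$ with $k \le \ksh$ and any $P_1 \in H(x_1)$, Glaeser stability says that for each $\eps > 0$ there exist compatible selections $P_i \in H(x_i)$ near $x_1$ satisfying
\begin{equation*}
    \abs{\da(P_i^l - P_j^l)(x_i)} \le \eps \abs{x_i - x_j}^{\ma}.
\end{equation*}
Applying this successively at each point of $S$ and using compactness on the finite-dimensional fibers $H(x_i)$, one produces a Whitney field on $S$; the classical Whitney extension theorem for (finite) Whitney fields then yields the required $F_S$ with controlled $C^m$-norm.

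For step (iii), to force $J_{x_0} F = P$, I would replace $H(x_0)$ by $\{P\}$, leave $H(x)$ unchanged for $x \ne x_0$, and iterate Glaeser refinement until stabilization (which terminates by Lemma \ref{lemma:old termination}). The Glaeser stability of the original bundle together with $P \in H(x_0)$ ensures the stabilized modified bundle still has nonempty fibers at every point, so steps (i)-(ii) apply to produce $F$. The main obstacle is the finiteness principle of step (i): making $\ksh$ depend only on $(m, N, D)$ and not on the geometry of $E$ or the bundle is the deep combinatorial core of the theory, requiring the machinery of shape fields and calibrations developed by Brudnyi-Shvartsman and Fefferman, which I would treat as a black box.
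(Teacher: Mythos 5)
The paper does not actually prove Theorem \ref{thm:v-valued}: it is imported as a black box from \cite{FL14} (the $D=1$ case from \cite{F06}, with the vector-valued case obtained by the ``gradient trick''), so a blind proof has to reconstruct that machinery. Within your sketch, step (iii) is correct and can be made cleaner than you state: after replacing $H(x_0)$ by the singleton $\{P\}$ the bundle is \emph{already} Glaeser-stable (for $x\neq x_0$ one may always shrink $\delta$ below $|x-x_0|$, so the refinement there never sees the modified fiber, and at $x_0$ the survival of $P$ is exactly the Glaeser stability of the original bundle), so no re-iteration is needed and the jet-prescription clause does reduce to the existence clause.

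The genuine gap is step (i), which is the load-bearing part of your plan: the finiteness principle you state is false for plain $C^m$. Uniform $C^m$-norm bounds over all finite subsets of bounded cardinality characterize (up to constants) extension in $C^{m-1,1}$ or $C^{m,\omega}$, not in $C^m$. For example, with $N=D=1$, $m=1$, $E=[0,1]$ and $f(x)=|x-\tfrac12|$, every finite subset admits a $C^\infty$ interpolant with $C^1$ norm at most an absolute constant (smooth the piecewise-linear interpolant), yet no $C^1$ function agrees with $f$ on $E$; consistently, this bundle is not Glaeser-stable, but the point is that your implication ``bounded finite-subset interpolants $\Rightarrow$ global section'' fails, and an Arzel\`a--Ascoli limit of uniformly $C^m$-bounded finite interpolants only yields a $C^{m-1,1}$ function, whose $m$-th derivatives need not be continuous. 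The $\eps$--$\delta$, asymptotic content of Glaeser stability must therefore enter the construction of the section itself (this is how \cite{F06} proceeds: local sections built at small scales with $\eps$-gains, patched by partitions of unity, plus the gradient trick in \cite{FL14} for $D>1$), rather than only being used to verify a finite-subset hypothesis. Relatedly, in step (ii) the claim that $F_S$ can be taken with $C^m$-norm bounded by a constant depending only on $(m,N,D)$ cannot be right: any bound must depend on $E$ and on the fibers (already for two points carrying very different admissible values), and Glaeser stability only furnishes compatible jets for points clustered in a small ball around a single point, not a single controlled Whitney field on an arbitrary finite set. As it stands, steps (i)--(ii) would prove at best a Lipschitz-type statement and cannot yield the $C^m$ theorem.
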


The $D=1$ case was first proven in \cite{F06} and was used directly to establish Theorem \ref{thm:v-valued} via the `gradient trick.'

Clearly, if any $H(x)$ is empty, then $(H(x))_{x\in E}$ cannot have a section. Thus, Theorem \ref{thm:v-valued} provides a complete solution to Problem \ref{prob:WEP}.

\section{Glaeser Refinement for Manifolds}\label{sec:mfld GR}

Fix integers $d,m,n\ge1$ satisfying $d<n$ throughout the remainder of the paper.

Given $Q\in O(n)$, define a coordinate system centered at the origin with axes $(q_1,\ldots.q_n)$, where $q_1,\ldots,q_n$ are the columns of $Q$. We split the coordinates into the first $d$ (viewed as independent variables) and the last $n-d$ (viewed as dependent variables), writing, for $x\in \R^n$,
\begin{equation*}
    x_Q=(q_1\cdot x,\ldots,q_d\cdot x)
\end{equation*}
and
\begin{equation*}
    x^\perp_Q=(q_{d+1}\cdot x,\ldots,q_n\cdot x).
\end{equation*}

That is,
\begin{equation}\label{eq:def coords}
    \begin{bmatrix}
        x_Q\\x_Q^\perp
    \end{bmatrix}=Q^Tx,
\end{equation}
where $x_Q\in\R^d,x_Q^\perp\in\R^{n-d}$, and $A^T$ refers to the transpose of the matrix $A$.

The original vector $x$ may be recovered from $x_Q$ and $x^\perp_Q$ by the formula
\begin{equation*}
    x=\begin{bmatrix} q_1 & \ldots & q_d \end{bmatrix} x_Q + \begin{bmatrix} q_{d+1} & \ldots & q_n \end{bmatrix} x^\perp_Q=Q\begin{bmatrix}
        x_Q\\ x_Q^\perp
    \end{bmatrix},
\end{equation*}
where $x_Q$ and $x^\perp_Q$ are interpreted as column vectors for matrix multiplication. We call $x_Q$ and $x_Q^\perp$ the $Q$-coordinates of $x$ and the resulting coordinate system the $Q$-coordinate system. The definition of the $Q$-coordinate system depends on $d$, though we hide this dependence in our notation as $d$ is fixed.

Given $Q\in O(n)$ and $F\in C^m(\R^d,\R^{n-d})$, let $G_Q(F)$ be the graph of $F$ in the $Q$-coordinate system, that is,
\begin{equation*}
    G_Q(F):=\{Q[(v,F(v))]:v\in \R^{n-d}\}.
\end{equation*}
Note that $G_Q(F)$ is a $C^m$ manifold defined so that $x\in G_Q(F)$ if and only if $F(x_Q)=x_Q^\perp)$.

At each point $x\in E$, let
\begin{equation}\label{eq:def mfld fiber}
    H(x)\subset O(n)\times \P^m(\R^d,\R^{n-d})
\end{equation}
such that 
\begin{equation}\label{eq:def H^Q}
    H^Q(x):=\{P:(Q,P)\in H(x)\}
\end{equation}
is an affine space for fixed $Q\in O(n)$ and all $x\in E$. In this case, we say $(H(x))_{x\in E}$ is an \textit{M-bundle} and each $H(x)$ is a \textit{fiber} of the M-bundle at $x$. We say an M-bundle $(H(x))_{x\in E}$ is \textit{nontrivial} if $H(x)$ is nonempty for all $x\in E$. We will occasionally refer to a pair $(Q,P)$ as a \textit{jet}.

Intuitively, the idea is that given a manifold $\M\ni x$ and $Q\in O(n)$ such that $\M$ is locally the graph of a $C^m$ function $F$ in $Q$ coordinates near $x$, $(Q,J_{x_Q}F)$ should be considered `a' jet of $\M$ at $x$. The graph $G_Q(J_{x_Q}F)$ well-approximates the graph $G_Q(F)$ near $x$.

A natural choice of M-bundle is
\begin{equation}\label{eq:def H_0}
    H_0(x)=\{(Q,P)\in O(n)\times P^{m}(\R^d,\R^{n-d}): P(x_Q)=x_Q^\perp\}.
\end{equation}

The purpose of an M-bundle is to keep track of all such possible approximations. We cannot fix a single $Q$ since it is possible that a given tangent space may not be viewable as a function in $Q$ coordinates for fixed $Q$.

However, we should acknowledge that by including all $Q\in O(n)$ as possibilities for coordinate systems, we are including a lot of redundancies. First, a portion of a manifold may be viewed as the graph of a function with many different choices for independent and dependent variables. For instance, if $\M=\{(x,y,z):x=y=z\}\subset\R^3$, then $\M$ may be viewed as a function of $x$, writing $\M=G_{Q_1}(F)$, where
\begin{equation*}
    Q_1=I_3, F(t)=(t,t),
\end{equation*}
or as a function of $y$ by $\M=G_{Q_2}(F)$, or a function of $z$ by $\M=G_{Q_3}(F)$, where
\begin{equation*}
    Q_2=\begin{bmatrix}
        0&1&0\\1&0&0\\0&0&1
    \end{bmatrix}, Q_3=\begin{bmatrix}
        0&1&0\\0&0&1\\1&0&0
    \end{bmatrix}.
\end{equation*}

Second, there are natural symmetries due to changes of coordinates within the independent variables. For instance, in the case where $d=2$ and $n=3$, $G_{Q_4}(y_1+y_2^2)=G_{Q_5}(y_1^2+y_2)$, where
\begin{equation*}
    Q_4=I_3, Q_5=\begin{bmatrix}
        0&1&0\\1&0&0\\0&0&1
    \end{bmatrix}.
\end{equation*}
Similar redundancies can occur by elements of $O(n)$ which permute the last $n-d$ coordinates, rather than the first $d$.

One could avoid the redundant information found in an M-bundle by forming equivalence classes in $O(n)\times\P^m(\R^d,\R^{n-d})$ (see Lemma \ref{lemma:jet realization well-defined} and Corollary \ref{cor:only cor?} as a starting point). However, the calculations involved may be particularly difficult (see the proof of Lemma \ref{lemma:jet realization well-defined}), so we will keep the extra information and develop tools for translating from one coordinate system to another. Of particular relevance to this issue will be the notion of a consistent M-bundle, defined later in this section.

\begin{definition}
Given an M-bundle $(H(x))_{x\in E}$, we define its Glaeser refinement $(\tilde{H}(x))_{x\in E}$ as follows.

Following \eqref{eq:def H^Q}, write $H(x)=\bigcup_{Q\in O(n)}H^Q(x)$ for $x\in E$. We say $P_0\in\widetilde{H^Q}(x_0)$ if and only if
\begin{enumerate}
    \item $P_0\in H^Q(x_0)$ and
    \item For all $\epsilon>0$, there exists $\delta>0$ such that if $x_1,\ldots,x_{\kb}\in E\cap B(x_0,\delta)$, there exist $P_i\in H^Q(x_i)$ such that
    \begin{equation}\label{eq:def GR 2}
        |\da (P_i-P_j)((x_i)_Q)|\le\epsilon|(x_i)_Q-(x_j)_Q|^{m-|\alpha|}
        \text{for all }|\alpha|\le m, 0\le i,j,\le \kb.
    \end{equation}
\end{enumerate}
Lastly, let
\begin{equation*}
    \tilde{H}(x)=\bigcup_{Q\in O(n)}\widetilde{H^Q}(x).
\end{equation*}
\end{definition}

Observe that $\widetilde{H^Q}(x)=\tilde{H}^Q(x)$, where $(\tilde{H}(x))_{x\in E}$ is the Glaeser refinement of $(H(x))_{x\in E}$ and
\begin{equation*}
    \tilde{H}^Q(x)=\{P:(Q,P)\in \tilde{H}(x)\}
\end{equation*}
as in \eqref{eq:def H^Q}; we will use both notations interchangeably, referring to $(\tilde{H}^Q(x))_{x\in E}$ as the Glaeser refinement of $(H^Q(x))_{x\in E}$. We say an M-bundle is \textit{Glaeser-stable} if it is its own Glaeser refinement.

The definition of Glaeser refinement for $H^Q(x)$ is nearly equivalent to the definition of Glaeser refinement given in Section 2 for the usual type of bundle. However, for bundles we consider $x_i\in\R^d$ and $B(x_0,\delta)$ is a $d$-dimensional ball. For M-bundles, $x_i\in\R^n$ and $B(x_0,\delta)$ is an $n$-dimensional ball. The compatibility of the $P_i$ may be tested through consideration of the $(x_i)_Q\in\R^d$, though the original $x_i\in\R^n$ are still relevant.

To see this difference expressed in an example, consider the Dirichlet function
\begin{equation*}
    f(x)=\begin{cases}
        1 & x\in \Q\\
        0 & x\notin \Q
    \end{cases},
\end{equation*}
which is known to be discontinuous. In particular, there is no $C^1$ extension of $f:[0,1]\to\R$ to the entire real line. Defining $H(x)=\{f(x)+m(t-x):m\in\R\}$ for $x\in [0,1]$, one may readily verify that the Glaeser refinement of the bundle $(H(x))_{x\in E}$ has empty fibers; for each $x\in [0,1]$ there exist arbitrarily close $t$ to $x$ such that $|f(t)-f(x)|=1$. (Here, $f(x)+m(t-x)$ is viewed as a polynomial in $t$.)

Now consider the graph of the Dirichlet function: the set
\begin{equation*}
    E=\{(x,1):x\in\Q\cap[0,1]\}\cup\{(x,0):x\in[0,1]\setminus\Q\}\subset\R^2.
\end{equation*}
We define an M-bundle as in \eqref{eq:def H_0} and observe that
\begin{equation*}
    H^{I_2}((x,y))=\begin{cases}
        \{1+m(t-x):m\in\R\} & x\in \Q\\
        \{0+m(t-x):m\in\R\} & x\notin\Q
    \end{cases}
\end{equation*}
for $(x,y)\in E$.

This M-bundle remains nontrivial after Glaeser refinement because the points $(x,0)$ and $(x',1)$ are far apart in $\R^2$, even when $x$ is near $x'$. For Glaeser refinement, $H^{I_2}(x)$ need only be checked against $H^{I_2}(x')$ if both $x$ and $x'$ are rational, or if both $x$ and $x'$ are irrational. We note there is a $C^1$ manifold containing $E$, for instance, $\{(x,0):x\in(-.5,1.5)\}\cup\{(x,0):x\in(-.5,1.5)\}$.

\begin{lemma}\label{lemma:affine space refinement}
    If $(H(x))_{x\in E}$ is an M-bundle, then so is $(\tilde{H}(x))_{x\in E}$.
\end{lemma}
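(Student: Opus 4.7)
The goal is to show that for every $Q \in O(n)$ and every $x \in E$, the fiber $\tilde{H}^Q(x)$ is an affine subspace of $\p^m(\R^d, \R^{n-d})$. If $\tilde{H}^Q(x_0)$ is empty there is nothing to check (treating the empty set as an affine space by convention), so the substantive content is closure under affine combinations. The plan is to mimic the standard verification from Section~\ref{sec:Fefferman solution} that the usual Glaeser refinement preserves the affine bundle structure, keeping track of the fact that here the compatibility condition is phrased using the $Q$-coordinates $(x_i)_Q \in \R^d$ while the points $x_i$ themselves live in $\R^n$.

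Fix $Q \in O(n)$ and $x_0 \in E$, let $P_0, P_0' \in \tilde{H}^Q(x_0)$, and let $\lambda, \mu \in \R$ with $\lambda + \mu = 1$. First I would note that $\lambda P_0 + \mu P_0' \in H^Q(x_0)$ since $(H(x))_{x\in E}$ is an M-bundle, so $H^Q(x_0)$ is affine. Then given $\eps > 0$, I would apply the Glaeser condition to $P_0$ and to $P_0'$ with tolerance $\eps' := \eps/(|\lambda| + |\mu| + 1)$ (the $+1$ handles the degenerate case where $\lambda$ or $\mu$ vanishes), producing $\delta, \delta' > 0$ with the property that for any $x_1, \ldots, x_{\kb} \in E \cap B(x_0, \min(\delta, \delta'))$ there exist witnesses $P_i \in H^Q(x_i)$ compatible with $P_0$ and witnesses $P_i' \in H^Q(x_i)$ compatible with $P_0'$, each to tolerance $\eps'$.

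The natural candidates for the affine-combination witnesses are $P_i^* := \lambda P_i + \mu P_i'$, which lie in $H^Q(x_i)$ because each $H^Q(x_i)$ is affine. Since $\da$ is linear, for $0 \le i, j \le \kb$ one has
\begin{equation*}
\da(P_i^* - P_j^*)((x_i)_Q) = \lambda \, \da(P_i - P_j)((x_i)_Q) + \mu \, \da(P_i' - P_j')((x_i)_Q),
\end{equation*}
and the triangle inequality combined with the two Glaeser estimates bounds this by $(|\lambda| + |\mu|)\eps' \cdot |(x_i)_Q - (x_j)_Q|^{\ma} \le \eps \cdot |(x_i)_Q - (x_j)_Q|^{\ma}$, as required. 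This shows $\lambda P_0 + \mu P_0' \in \tilde{H}^Q(x_0)$, so each fiber $\tilde{H}^Q(x_0)$ is affine and $(\tilde{H}(x))_{x \in E}$ is an M-bundle.

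There is no real obstacle here; the argument is purely a transcription of the classical observation that Glaeser refinement of an affine bundle is affine, using the linearity of $\da$ and the definition of $H^Q$. The only minor subtlety is cosmetic: the ambient points $x_i$ lie in $\R^n$ rather than $\R^d$, but since the compatibility inequality only involves their $Q$-projections $(x_i)_Q$ and the $P_i$ live in $\p^m(\R^d, \R^{n-d})$, the linear-algebra part of the argument is unchanged from the usual setting.
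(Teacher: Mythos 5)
Your proposal is correct and follows essentially the same argument as the paper: take affine combinations of the witnesses $P_i$, $P_i'$ guaranteed by the Glaeser condition for $P_0$ and $P_0'$ (with tolerance scaled by roughly $|\lambda|+|1-\lambda|$), and use linearity of $\da$ plus the triangle inequality. Your explicit remarks that the combination lies in $H^Q(x_0)$ and that one takes the minimum of the two $\delta$'s are small points the paper leaves implicit, but the proof is the same.
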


\begin{proof}
    The part of this statement requiring proof is that $\tilde{H}^Q(x)$ is an affine space.

    Let $x_0\in E, Q\in O(n)$ and suppose $P_0,P_0'\in \tilde{H}^Q(x_0)$. We want to show $\lambda P_0+(1-\lambda)P_0'\in \tilde{H}^Q(x_0)$ for $\lambda\in\R$.
    
    Let $\epsilon>0$. There exists $\delta>0$ such that for $x_1,\ldots,x_\kb\in E\cap B(x_0,\delta)$ there exist $P_i,P_i'\in H^Q(x_i)$ ($1\le i\le \kb$) satisfying
    \begin{equation*}
        |\da (P_i-P_j)((x_i)_Q)|\le\epsilon/R|(x_i)_Q-(x_j)_Q|^{m-|\alpha|}
        \text{for all }|\alpha|\le m, 0\le i,j,\le \kb
    \end{equation*}
    and
    \begin{equation*}
        |\da (P'_i-P'_j)((x_i)_Q)|\le\epsilon/R|(x_i)_Q-(x_j)_Q|^{m-|\alpha|}
        \text{for all }|\alpha|\le m, 0\le i,j,\le \kb,
    \end{equation*}
where $R=R(\lambda)$ is to be determined.

    Define $P_i''=\lambda P_i+(1-\lambda)P_i'$ for $0\le i\le \kb$. Then,
    \begin{align*}
        |\da (P''_i-P''_j)((x_i)_Q)|&\le |\lambda|\cdot |\da (P_i-P_j)((x_i)_Q)|+|1-\lambda|\cdot |\da (P'_i-P'_j)((x_i)_Q)|\\
        &\le (|\lambda|+|1-\lambda|)\epsilon/R|(x_i)_Q-(x_j)_Q|^{m-|\alpha|}\\
        &=\epsilon|(x_i)_Q-(x_j)_Q|^{m-|\alpha|}
    \end{align*}
    for all $|\alpha|\le m, 0\le i,j,\le \kb$ by setting $R=(|\lambda|+|1-\lambda|)$. Thus, $P_0''\in \tilde{H}^Q(x_0)$.
\end{proof}

To address the computability of Glaeser refinement for M-bundles, fix $x_0\in E, Q\in O(n), P_0\in H^Q(x_0)$ and for $x_1,\ldots,x_\kb\in E$ define
\begin{equation*}
    \mathcal{Q}(P_1,\ldots,P_{\kb};x_1,\ldots,x_{\kb})=\sum_{\substack{0\le i,j\le \kb\\ x_i\neq x_j}}\sum_{|\alpha|\le m} \sum_{1\le l\le n-d}\left[\frac{\da(P^l_i-P^l_j)((x_i)_Q)}{|(x_i)_Q-(x_j)_Q|^{m-|\alpha|}}\right]^2.
\end{equation*}

As before, for fixed $x_0,\ldots,x_\kb$, $\mathcal{Q}(P_1,\ldots,P_{\kb};x_1,\ldots,x_{\kb})$ is a positive semidefinite quadratic form in $P_1,\ldots,P_{\kb}$. Since each $H^Q(x_i)$ is an affine space,
\begin{equation*}
    \mathcal{Q}_{\min}(x_1,\ldots,x_{\kb})=\min\{\mathcal{Q}(P_1,\ldots,P_{\kb};x_1,\ldots,x_{\kb}):P_i\in H^Q(x_i), 1\le i\le \kb\}
\end{equation*}
may be computed with linear algebra. We see that $P_0\in \tilde{H}^Q(x_0)$ if and only if $\mathcal{Q}_{\min}(x_1,\ldots,x_{\kb})\to 0$ as $(x_1,\ldots,x_{\kb})\to (x_0,\ldots,x_0)$ in $E^{\kb}$. As in the case of bundles, the Glaeser refinement of M-bundles may also be computed using linear algebra and a limit. While this is done here independently for an infinite number of $Q\in O(n)$, Theorem \ref{thm:finite number Q's} will show this may be reduced to a finite number for the purposes of determining the existence of a manifold extension.

\begin{lemma}[Termination Lemma]\label{lemma:termination}
Let $E\subset\R^n$ be compact and $(H_0(x))_{x\in E}$ be an M-bundle with iterated Glaeser refinements $(H_k(x))_{x\in E}$, $k\ge1$. There exists $l^*=l(n,m,d)$ such that $H_k(x)=H_{l^*}(x)$ for all $k\ge l^*$ and $x\in E$.
\end{lemma}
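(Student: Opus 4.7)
The plan is to reduce to the classical termination lemma, Lemma \ref{lemma:old termination}, applied separately for each coordinate system $Q\in O(n)$.

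First, I would observe that the M-bundle Glaeser refinement decouples across $Q\in O(n)$: the fiber $\widetilde{H^Q}(x_0)$ depends only on the $Q$-slice $(H^Q(y))_{y\in E}$, not on $(H^{Q'}(y))_{y\in E}$ for $Q'\ne Q$. Therefore $H_k(x)=\bigcup_{Q\in O(n)}H_k^Q(x)$ for every $k$, and it suffices to show that for each fixed $Q\in O(n)$, $(H_k^Q(x))_{x\in E}$ stabilizes after at most $l^*=l^*(m,n,d)$ iterations, uniformly in $Q$ and $E$.

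Fixing $Q$, by Lemma \ref{lemma:affine space refinement} the sequence $(H_k^Q(x))_{x\in E}$ is a decreasing chain of affine subspaces of the finite-dimensional space $\P^m(\R^d,\R^{n-d})$, which has dimension $(n-d)\binom{m+d}{d}$. The only formal differences from the classical setup of Lemma \ref{lemma:old termination} are that the base set $E$ lies in $\R^n$ rather than $\R^d$, and that the compatibility test \eqref{eq:def GR 2} uses an $n$-dimensional neighborhood $B(x_0,\delta)$ with the projected scaling factor $|(x_i)_Q-(x_j)_Q|^{m-|\alpha|}$. The combinatorial argument that proves Lemma \ref{lemma:old termination}, going back to \cite{BMP03} and adapted in \cite{F06,FL14}, proceeds by assigning to each $x\in E$ a ``type'' recording the affine position of the fiber within the ambient polynomial space, showing the set of possible types has cardinality depending only on the ambient dimension, and exhibiting that each non-stable iteration strictly decreases the type at some point. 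Since this bookkeeping relies only on the affine-subspace structure, the monotonicity $H_{k+1}^Q(x)\subset H_k^Q(x)$, and the local (limit) nature of the refinement --- and not on any specific metric --- the argument transfers to the M-bundle setting and gives $l^*=l^*(m,d,n-d)=l^*(m,n,d)$.

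The step I expect to be the main obstacle is verifying that the ``type strictly decreases'' portion of the argument in \cite{FL14} survives the switch from the usual Euclidean distance on $E\subset\R^d$ to the hybrid pair ``$n$-ball neighborhood with $d$-projected scaling'' used for M-bundles. What needs to be checked is that the classical argument uses only (i) the affine-linear nature of the compatibility constraint on $(P_0,\ldots,P_{\kb})$ for fixed test points, which is transparent from \eqref{eq:def GR 2}, and (ii) the simultaneous vanishing $|(x_i)_Q-(x_j)_Q|,\ |x_i-x_j|\to 0$ as $\delta\to 0$, which is automatic since projection is $1$-Lipschitz. Once these are in place the adaptation is mechanical, and the resulting bound $l^*=l^*(m,n,d)$ is independent of both $Q$ and $E$, as required.
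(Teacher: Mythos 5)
Your proposal is correct and follows essentially the same route as the paper: first observe that the refinement decouples over $Q\in O(n)$ so it suffices to stabilize each slice $(H_k^Q(x))_{x\in E}$ uniformly in $Q$, then note that the classical termination argument (Glaeser/Bierstone--Milman--Paw\l{}ucki, as in \cite{F06,FL14}) transfers because it uses only the affine structure of the fibers, monotonicity, and the local/limit nature of the refinement. The paper packages this last step as three properties (locality, semicontinuity of fiber dimension, stability of full fibers) and cites the standard argument, which is the same mechanism you describe.
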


\begin{proof}
Since each $H_{k+1}^Q(x)$ depends only on $H^Q_k(y)$ for $y$ near $x$ (and not the whole of $H_k(y)$), it suffices to show that $H^Q_k(x)=H^Q_{l^*}(x)$ for all $k\ge l^*,x\in E$, and $Q\in O(n)$.

We observe that Glaeser refinement has the following three properties:
\begin{enumerate}
    \item If $(H^Q(x))_{x\in E}$ and $(\hat{H}^Q(x))_{x\in E}$ agree in a neighborhood of a point $x_0\in E$, then so do their Glaeser refinements.
    \item Each $H^Q(x)$ is an affine space, thus has a well-defined dimension. We have
    \begin{equation*}
        \dim \tilde{H}^Q(x)\le \liminf_{y\to x, y\in E}\dim H^Q(y).
    \end{equation*}
    \item If $H^Q(x)=\P^m(\R^d,\R^{n-d})$ for all $x\in E$, then the Glaeser refinement of $(H^Q(x))_{x\in E}$ is itself.
\end{enumerate}

From these three properties, one can arrive at the desired conclusion using a standard argument (see the proof of Lemma 4.1 in \cite{FI20-book}).

\end{proof}

Having proven termination of Glaeser refinement for general M-bundles, for the remainder of the paper fix $(H_0(x))_{x\in E}$ as in \eqref{eq:def H_0}, using $(H_1(x))_{x\in E}, (H_2(x))_{x\in E}$, and so on to denote its Glaeser refinements.

As mentioned in the Introduction, we may naively hope to prove that there exists a manifold extension if and only if the Glaeser refinement terminates in a nontrivial M-bundle, but this statement is false. We now introduce the ideas needed to formalize the topological condition.

Given a compact set $E\subset\R^n$, and $K(x)\subset \Gr(n,d)$ for $x\in E$, we say $(K(x))_{x\in E}$ is a \textit{Gr-bundle}. A \textit{section} of such a Gr-bundle is a continuous function $f:E\to \Gr(n,d)$ such that $f(x)\in K(x)$ for all $x\in E$. To determine continuity, we use the topology on $\Gr(n,d)$ induced by the metric in \eqref{eq:Grass dist}.

Let $Q\in O(n)$ and $A$ be an $(n-d)\times d$ matrix. Define
\begin{equation*}
    Q\oplus A:=\Col\left(Q\begin{bmatrix}
        I_d\\ A
    \end{bmatrix}\right)\in \Gr(n,d).
\end{equation*}
Note that for $x\in\R^n$ and $F\in C^m(\R^d,\R^{n-d})$, $Q\oplus \nabla F(x_Q)$ is parallel to the tangent plane of $G_Q(F)$ at $x$; in particular, by \eqref{eq:tangent space of graph} $T_xG_Q(F)=x+Q\oplus \nabla F(x_Q)$. That is,
\begin{equation*}
    p(T_xG_Q(F))=Q\oplus \nabla F(x_Q),
\end{equation*}
where $p$ is as in \eqref{eq:natural projection}.

Given an M-bundle $\mathcal{H}=(H(x))_{x\in E}$, define
\begin{equation*}
    \hat{H}(x)=\{W\in \Gr(n,d):\text{ there exists } (Q,P)\in H(x)\text{ such that } Q\oplus \nabla P(x_Q)=W \}
\end{equation*}
and define a Gr-bundle $\widehat{\mathcal{H}}=(\hat{H}(x))_{x\in E}$. This Gr-bundle is meant to represent the collection of possible tangent spaces determined by the jets in the M-bundle $\mathcal{H}$. Note that the $\oplus$ operation used to define $\widehat{\mathcal{H}}$ is computable with just linear algebra.

\begin{lemma}\label{lemma:forward direction}
    Let $E\subset\R^n$ be compact and suppose $E\subset\M$, where $\M$ is either a $C^m$, $d$-dimensional manifold or a $C^m$, $d$-dimensional compact manifold with boundary. Then,
    \begin{enumerate}
        \item The M-bundle $\mathcal{H}_*=(H_{l^*}(x))_{x\in E}$ is nontrivial and
        \item The Gr-bundle $\widehat{\mathcal{H}_*}$ has a section.
    \end{enumerate}
    Furthermore, if $x\in E$ and $Q\in O(n)$ such that
\begin{equation*}
    \mathcal{M}\cap B(x,\delta)=G_Q(F)\cap B(x,\delta)
\end{equation*}
for some $\delta>0$ and $F\in C^m(\R^d,\R^{n-d})$, then $(Q,J_xF)\in H_{l^*}(x)$.
\end{lemma}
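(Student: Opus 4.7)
The plan is to establish the \emph{furthermore} clause first, which immediately gives part~(1), and then leverage it together with the continuity of tangent spaces to deduce part~(2). The key analytic input is Taylor's theorem for $C^m$ functions, and the main technical obstacle is propagating a single local graph representation of $\M$ to all nearby points of $E$ in order to drive the Glaeser induction, as well as handling boundary points of $\M$. For the latter, at a boundary point $x$ I would take the given local $C^m$ parametrization $F_0 : U \to \M$ with $U$ relatively open in $\R^d_+$, extend $F_0$ to a $C^m$ map on an open subset of $\R^d$ (via Seeley's extension, say), and apply the inverse function theorem to extract a coordinate frame $Q \in O(n)$ and a function $F \in C^m(\R^d, \R^{n-d})$ with $\M \cap B(x, \delta_0) \subset G_Q(F)$ on a small ball; for interior points, such data exist directly by definition. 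Crucially, the same $(Q, F)$ then serves as local data at every $y \in E \cap B(x, \delta_0)$ on the ball $B(y, \delta_0 - |y - x|)$.

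\textbf{Induction on the refinement index.} I claim by induction on $k$ that for every $x \in E$ and every local data $(Q, F, \delta_0)$ as above, $(Q, J_{x_Q} F) \in H_k(x)$. The base case $k = 0$ is immediate since $J_{x_Q} F(x_Q) = F(x_Q) = x_Q^\perp$. For the inductive step, fix $\epsilon > 0$; Taylor's theorem applied to the $C^m$ function $F$ on a compact neighborhood of $x_Q$ produces $\delta \in (0, \delta_0)$ such that
\[
|\da (J_{y_Q} F - J_{y'_Q} F)(y_Q)| \le \epsilon \, |y_Q - y'_Q|^{\ma}
\]
for all $y, y' \in B(x, \delta) \cap E$ and $|\alpha| \le m$. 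Given $x_1, \dots, x_{\kb} \in E \cap B(x, \delta)$, the inductive hypothesis applied at each $x_i$ with the same $(Q, F)$ as local data yields $P_i := J_{(x_i)_Q} F \in H_k^Q(x_i)$, and the display verifies condition~(2) of the Glaeser refinement at $x$. Hence $(Q, J_{x_Q} F) \in H_{k+1}(x)$. Iterating to $k = l^*$ proves the \emph{furthermore} statement, and part~(1) follows since every $x \in E$ admits some local graph representation.

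\textbf{Section of $\widehat{\mathcal{H}_*}$.} Define $f : E \to \Gr(n, d)$ by $f(x) = p(T_x \M)$, with $p$ as in \eqref{eq:natural projection}. The map $x \mapsto T_x \M$ into $\Graff(n, d)$ is continuous by the remark following \eqref{eq:natural projection} (using that $\M$ is at least $C^1$), and composing with the continuous projection $p$ makes $f$ continuous. To see $f(x) \in \hat{H}_{l^*}(x)$, apply the \emph{furthermore} clause at $x$ to obtain $(Q, J_{x_Q} F) \in H_{l^*}(x)$ from any local graph representation and compute via \eqref{eq:tangent space of graph}:
\[
Q \oplus \nabla (J_{x_Q} F)(x_Q) = Q \oplus \nabla F(x_Q) = p(T_x G_Q(F)) = p(T_x \M) = f(x),
\]
so $f(x) \in \hat{H}_{l^*}(x)$ by the definition of $\hat{H}$, completing the sketch.
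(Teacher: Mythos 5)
Your proposal is correct and follows essentially the same route as the paper's proof: fix a local graph representation $(Q,F)$ of $\M$ near $x$, use Taylor's theorem for $F$ together with the fact that the \emph{same} $(Q,F)$ serves as local data at all nearby points of $E$ to push $(Q,J_{x_Q}F)$ through each Glaeser refinement by induction, and then obtain the Gr-bundle section as $x\mapsto p(T_x\M)$ using continuity of the tangent space and of $p$. Your explicit treatment of boundary points (extending the parametrization and invoking the inverse function theorem to get a graph containing $\M$ locally) is a detail the paper passes over silently, but it does not change the argument.
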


\begin{proof}
    Given $x_0\in E$, choose $Q$, $\delta_0>0$, and $F\in C^m(\R^d,\R^{n-d})$ such that
    \begin{equation*}
        \M\cap B(x_0,\delta_0)=G_Q(F)\cap B(x_0,\delta_0),
    \end{equation*}
that is,
    \begin{equation}\label{eq:new ref!}
        y_Q^\perp=F(y_Q)
    \end{equation}
    for all $y\in \M\cap B(x_0,\delta_0)$.

    Fix $\epsilon>0$. By Taylor's theorem applied to $F$, there exists $0<\delta<\delta_0$ such that $s,t\in B((x_0)_Q,\delta)$ implies
\begin{equation}\label{eq:Taylor applied to F}
    |\da (J_sF-J_tF)(s)|\le\epsilon|s-t|^{m-|\alpha|}
        \text{for all }|\alpha|\le m, 0\le i,j,\le \kb.
\end{equation}

Given $x_1,...,x_{\kb}\in E\cap B(x_0,\delta)$, let
    \begin{equation}\label{eq:pick P_i on mfld}
        P_i=J_{(x_i)_Q}(F((x_i)_Q),
    \end{equation}
for $0\le i\le \kb$, noting that $P_i((x_i)_Q)=(x_i)_Q^\perp$ by \eqref{eq:new ref!}, thus $(Q,P_i)\in H_0(x_i)$. Thus, as $(x_1)_Q,...,(x_{\kb})_Q\in E\cap B((x_0)_Q,\delta)$, \eqref{eq:Taylor applied to F} implies $(Q,P_0)\in H_1(x_0)$ by definition of Glaeser refinement.

By similar argument (noting that $\M$ is also locally the graph of a $C^m$ function in $Q$ coordinates at each $y\in E\cap B(x_0,\delta_0)$), we see that $(Q,P_y)\in H_1(y)$ for all $y\in E\cap B(x_0,\delta_0)$, where 
\begin{equation}\label{eq:choose P_y}
    P_y=J_{y_Q}F(y_Q).
\end{equation}

By the above reasoning with $H_k$ and $H_{k+1}$ in place of $H_0$ and $H_1$, respectively, $(Q,P_y)\in H_k(y)$ for all $k\ge0$. In particular, $(Q,P_0)\in H_{l^*}(x_0)$ and since $x_0\in E$ was arbitrary, the M-bundle $(H_{l^*}(x))_{x\in E}$ is nontrivial. This establishes the first point.


As before, let $x\in E$, choose $Q$, $\delta>0$, and $F\in C^m(\R^d,\R^{n-d})$ such that
    \begin{equation*}
        \M\cap B(x,\delta)=G_Q(F)\cap B(x,\delta),
    \end{equation*}
    
By the prior argument, $(Q,J_{x_Q}F)\in H_{l^*}(x)$. By \eqref{eq:tangent space of graph},
\begin{equation*}
    T_x\M=x + Q\oplus \nabla F(x_Q)= x + Q\oplus \nabla J_{x_Q}F(x_Q).
\end{equation*}
thus, the element of $\Gr(n,d)$ parallel to $T_x\M$ lies in $\hat{H}_{l^*}(x)$. Since $T_x\M$ varies continuously in $x$, and the natural projection $p$ in \eqref{eq:natural projection} is continuous,
\begin{equation*}
    p(T_x\M)=Q\oplus \nabla J_{x_Q}F(x_Q)\in \hat{H}(x)
\end{equation*}
is a section of $(\hat{H}_{l^*}(x))_{x\in E}$.

\end{proof}

At this point, we have shown that if $E$ has a manifold extension, then repeated Glaeser refinements of $(H_0(x))_{x\in E}$ terminate in a nontrivial Glaeser stable M-bundle $(H_{l^*}(x))_{x\in E}$ such that $(\hat{H}_{l^*}(x))_{x\in E}$ has a section. To complete our answer to Problem \ref{prob:manifold WEP}, we show that any such M-bundle implies a manifold extension of $E$.

The proof leads to a slightly stronger result; to state it, we require some more terminology.

An M-bundle is \textit{proper} if $P(x_Q)=x_Q^\perp$ for all $(Q,P)\in H(x)$ and all $x\in E$, that is, if $H(x)\subset H_0(x)$ for all $x\in E$. If an M-bundle is proper, so are all its Glaeser refinements.

Let $(H(x))_{x\in E}$ be a proper M-bundle. Let $x\in \R^n$, $P\in H^Q(x)$, and choose $F\in C^m(\R^d,\R^{n-d})$ such that $J_{x_Q}F=P$. If $R\in O(n)$ such that $G_Q(F)$ is locally the graph of a $C^m$ function $F'$ in $R$ coordinates in a neighborhood of $x$, we define the \textit{realization of the jet} $(Q,P)$ at $x$ in the $R$ coordinate system as
\begin{equation*}
    J_{x,R}(Q,P)=(R,J_{x_R}F')\in O(n)\times \P^m(\R^d,\R^{n-d}).
\end{equation*}
This object is shown to be well-defined (independent of choice of $F$) in Lemma \ref{lemma:jet realization well-defined}.

We say an M-bundle $(H(x))_{x\in E}$ is \textit{consistent} if, whenever $x\in E$, $(Q,P)\in H(x)$, and $G_Q(P)$ is locally the graph of a $C^m$ function in $R$ coordinates in a neighborhood of $x$ for $R\in O(n)$, then $J_{x,R}(Q,P)\in H(x)$. We will show that $(H_{l^*}(x))_{x\in E}$ is consistent in Lemma \ref{lemma:diff Q okay}.

\begin{remark}\label{rmk:only remark (so far)}
    Observe that if $\M$ is locally the graph of a $C^m$ function $F$ in $Q$ coordinates and a $C^m$ function $F'$ in $R$ coordinates, then
    \begin{equation*}
        p(T_x\M)=Q\oplus \nabla F(x_Q)
    \end{equation*}
    and
    \begin{equation*}
        p(T_x\M)=R\oplus \nabla F'(x_R).
    \end{equation*}
    Thus, when $J_{x,R}(Q,P)=(R,P')$, we have
    \begin{equation*}
        Q\oplus \nabla P(x_Q)=R\oplus \nabla P'(x_R).
    \end{equation*}
\end{remark}

The following theorem is the main result of this paper.

\begin{theorem}\label{thm:main}
Fix positive integers $m$ and $d\le n$ and let $E\subset \R^n$ be compact. Let $\mathcal{H}=(H(x))_{x\in E}$ be a nontrivial, Glaeser-stable, proper, consistent M-bundle.

If the Gr-bundle $\widehat{\mathcal{H}}=(\hat{H}(x))_{x\in E}$ has a section, then there exists a $C^m$, $d$-dimensional, compact manifold with boundary $\M$ such that $E\subset\mathcal{M}\subset\R^n$.

Furthermore, if for $x_0\in E$, $W_0\in \hat{H}(x_0)$ and $\widehat{\mathcal{H}}$ has a section $f$ such that $f(x_0)=W_0$, then for all $(Q,P)\in H(x)$ satisfying $Q\oplus \nabla P((x_0)_Q)=W_0$, one may take $\mathcal{M}$ such that
\begin{equation}\label{eq:REF_tilde}
    \mathcal{M}\cap B(x_0,\delta)=G_Q(F)\cap B(x_0,\delta)
\end{equation}
for some $\delta>0$ and $F\in C^m(\R^d,\R^{n-d})$ satisfying $J_{(x_0)_Q}F=P$.
\end{theorem}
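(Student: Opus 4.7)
The plan is to construct local graph interpolants at each $x\in E$, paste them via a partition of unity into a global $C^m$ manifold containing $E$, then cut out a compact piece with a transversely intersecting hypersurface. Fix a continuous section $f:E\to \Gr(n,d)$ with $f(x)\in \hat H(x)$. Near a given $y_0\in E$ I would first choose $Q\in O(n)$ with $\Pi(Q)=f(y_0)$ and $(Q,P_{y_0})\in H(y_0)$ realizing $Q\oplus\nabla P_{y_0}((y_0)_Q)=f(y_0)$; by continuity of $f$ and Lemma \ref{lemma:Grass bases}, $Q$-coordinates remain adapted throughout a ball $B(y_0,\delta)$, so that every $y\in E\cap B(y_0,\delta)$ carries a jet of the form $(Q,P_y)\in H(y)$ with $Q\oplus\nabla P_y(y_Q)$ close to $f(y)$ (this is where consistency of $\mathcal H$ enters: the intrinsically defined tangent plane can be re-expressed over a fixed $Q$). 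The restricted bundle $(H^{Q}(y))_{y\in E\cap B(y_0,\delta)}$ is then a Glaeser-stable $\P^m(\R^d,\R^{n-d})$-valued bundle in the classical sense, so Theorem \ref{thm:v-valued} furnishes $F_{y_0}\in C^m(\R^d,\R^{n-d})$ whose jets realize $(Q,P_y)$ for all such $y$, and its ``furthermore'' clause lets me impose $J_{(x_0)_Q}F_{x_0}=P$ at the distinguished point $x_0$ to secure \eqref{eq:REF_tilde}.

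Next, by compactness I cover $E$ by finitely many such balls $B_i=B(y_i,\delta_i)$ with associated pairs $(Q_i,F_i)$. On each pairwise overlap, after possibly shrinking $\delta_i$, Lemma \ref{lemma:Grass bases} and Lemma \ref{lemma:jet realization well-defined} let me re-express $F_j$ in $Q_i$-coordinates as a $C^m$ function $\widetilde F_j^{(i)}$ whose graph coincides with $G_{Q_j}(F_j)$ in a neighborhood of $E\cap B_i\cap B_j$. The crucial claim is that at each $y\in E\cap B_i\cap B_j$ the functions $F_i$ and $\widetilde F_j^{(i)}$ have identical $m$-jets: they pass through $y$ by properness, and the selection $f(y)$ pins down a single tangent plane, which combined with Glaeser-stability and consistency forces all higher-order derivatives inside the affine fiber $H^{Q_i}(y)$ to match. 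This pointwise jet-agreement on $E$ is the content of the topological hypothesis.

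Step three is the pasting. On each $B_i$, working in $Q_i$-coordinates, select a smooth partition of unity $\{\phi_i\}$ subordinate to $\{B_i\}$ and form $\sum_j \phi_j\,\widetilde F_j^{(i)}$ on the overlap region. At any $y\in E\cap B_i$, each summand evaluates to $y_{Q_i}^\perp$, so $y$ remains on the pasted graph; jet-agreement on $E$ combined with a Taylor-remainder estimate shows the pasted function is $C^m$ with the prescribed jet at every point of $E$. Following the method of \cite{fefferman2016testing}, after further shrinking I verify that in a full neighborhood of $E$ the result is the graph of a single $C^m$ function in locally fixed coordinates, hence a $d$-dimensional $C^m$ manifold $\M_0\supset E$. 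Finally, enclose $E$ in a smoothly bounded open set $U$, apply Theorem \ref{thm:a.e. is transverse} to translate $\partial U$ slightly so it meets $\M_0$ transversely, and invoke Theorem \ref{thm:transverse intersection is manifold} to conclude that $\overline U\cap \M_0$ is a compact $C^m$ manifold with boundary containing $E$; the local structure \eqref{eq:REF_tilde} at $x_0$ is preserved provided $x_0\in U$ and $\delta$ is chosen small enough that $B(x_0,\delta)\subset U$.

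The main obstacle is step three: ensuring that the convex-combination pasting does not merely produce a function-like object on $E$ but actually yields a $C^m$ manifold in a full neighborhood of $E$, without spurious self-intersections arising from the rotation between the $Q_i$-frames. The bookkeeping required to express all local graphs as graphs over a single varying frame — and to verify injectivity and smoothness of the result — is where consistency of $\mathcal H$ is used most heavily; it is also the reason the cut to a compact manifold with boundary is performed only at the end, as extending $\M_0$ beyond a neighborhood of $E$ is neither possible nor needed.
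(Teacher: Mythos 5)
Your overall architecture (local interpolants from Lemma \ref{lemma:construct local interpolant}/Theorem \ref{thm:v-valued} chosen compatibly with the section $f$, a finite cover, a partition-of-unity pasting, and a final cut by a transverse hypersurface via Theorems \ref{thm:a.e. is transverse} and \ref{thm:transverse intersection is manifold}) matches the paper's, but the pasting step contains a genuine gap, and it is exactly the step you flag as the ``main obstacle.'' Your ``crucial claim'' --- that at each $y\in E\cap B_i\cap B_j$ the interpolants $F_i$ and $\widetilde F_j^{(i)}$ have identical $m$-jets because properness fixes the value and the selection $f(y)$ pins the tangent plane, with Glaeser-stability and consistency forcing the higher derivatives to match --- is false in general. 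The fibers $H^{Q_i}(y)$ are affine spaces that can contain many jets with the same value and even the same gradient but different higher-order terms; moreover the section $f$ only constrains the frames/gradients at the chosen center points (and, after the paper's construction, only up to an error $c_0$ at nearby points of $E$), so not even the first derivatives of two local interpolants need agree at a common point of $E$. Nothing in the hypotheses forces jet agreement on overlaps, and the topological hypothesis is not ``the content'' of such an agreement: it is used only to choose coordinate frames coherently enough that neighboring graphs have nearly parallel tangent planes and can be re-expressed over a common frame.

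Even setting that aside, pointwise jet agreement on $E$ would not rescue the coordinate-wise averaging $\sum_j\phi_j\widetilde F_j^{(i)}$: the convex combination of dependent coordinates is frame-dependent, so the pasted graphs produced in $Q_i$-coordinates on $B_i$ and in $Q_j$-coordinates on $B_j$ need not coincide on $B_i\cap B_j$ away from $E$, and no globally well-defined manifold results. The paper's resolution --- the idea missing from your proposal --- is to first build an auxiliary approximating manifold $\M_{put}$ from the weighted squared-distance function $G$ of \eqref{eq:def F^o} via Theorem \ref{thm:testing} (adapted from \cite{fefferman2016testing}), equip it with a disc bundle $D_{put}$, realize each local graph $G_i$ as a local section $J_i$ of that single bundle, and average the sections fiberwise with a partition of unity on $\M_{put}$ as in \eqref{eq:pasting of local sections}; this averaging is intrinsic, so no inter-chart consistency of the pasted object has to be checked, and $E$ lies on the result simply because every defined $J_i$ passes through each point of $E$. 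A secondary omission: the paper arranges the $F_i$ to be $C^\infty$ away from $E$ (property \eqref{eq:property 2.7}) precisely so that the boundary cut can invoke the $C^\infty$ transversality statements; your $\M_0$ is only $C^m$ near $\partial U$, so Theorems \ref{thm:transverse intersection is manifold} and \ref{thm:a.e. is transverse} as stated do not directly apply.
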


In particular, Theorem \ref{thm:main} applies in the case $(H(x))_{x\in E}=(H_{l^*}(x))_{x\in E}$, which is proper by definition, consistent by Lemma \ref{lemma:diff Q okay}, and Glaeser-stable by Lemma \ref{lemma:termination}.

We can do better in the case $m=1$.

By a \textit{section} of an M-bundle $(H(x))_{x\in E}$, we mean a manifold $\mathcal{M}\supset E$ such that the following holds. Let $x\in E$ and choose $Q\in O(n)$ such that
\begin{equation*}
    \mathcal{M}\cap B(x,\delta)=G_Q(F)\cap B(x,\delta)
\end{equation*}
for some $F\in C^m(\R^d,\R^{n-d})$ and $\delta>0$. Then, $(Q,J_{x_Q}F)\in H(x)$.

The above definition suggests we only consider proper M-bundles, so that the section contains the set $E$. It also suggests M-bundles should be consistent since any small neighborhood of a manifold may be viewed as the graph of a function in many different coordinate systems; the definition of a section requires that the resulting jet lie in $H(x)$ for any such coordinate system.

\begin{theorem}\label{thm:m=1 case}
    Let $m=1$. Fix positive integers $d\le n$ and let $E\subset \R^n$. Let $\mathcal{H}=(H(x))_{x\in E}$ be a nontrivial, Glaeser-stable, proper, consistent M-bundle.

If the Gr-bundle $\widehat{\mathcal{H}}=(\hat{H}(x))_{x\in E}$ has a section, then $\mathcal{H}$ has a section.

Furthermore, if $W\in \hat{H}(x)$ and $\widehat{\mathcal{H}}$ has a section $f$ such that $f(x)=W$, then for all $(Q,P)\in H(x)$ satisfying $Q\oplus \nabla P(x_Q)=W$, one may take the section of $\mathcal{H}$ to be a compact manifold with boundary $\mathcal{M}$ such that
\begin{equation*}
    \mathcal{M}\cap B(x,\delta)=G_Q(F)\cap B(x,\delta)
\end{equation*}
for some $\delta>0$ and $F\in C^m(\R^d,\R^{n-d})$ satisfying $J_{x_Q}F=P$.
\end{theorem}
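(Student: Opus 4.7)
The approach is to reduce the problem to the classical Whitney extension theorem (Theorem \ref{thm:v-valued}) on local patches, using the section $f$ of $\widehat{\mathcal{H}}$ to promote the M-bundle $\mathcal{H}$ to a classical $1$-jet field. By Lemma \ref{lemma:Grass bases} and continuity of $f$, I would cover $E$ by open sets $\{U_i\}$ on each of which a single $R_i \in O(n)$ makes $f(y)$ the graph $R_i \oplus A(y)$ of a continuous matrix-valued function $A\colon E \cap U_i \to \R^{(n-d) \times d}$. Since $\mathcal{H}$ is proper and consistent, the polynomial $P_y(z) := y_{R_i}^\perp + A(y)(z - y_{R_i})$ lies in $H^{R_i}(y)$ for each $y \in E \cap U_i$, giving a distinguished singleton-fiber classical bundle $K_i(y) = \{P_y\}$ on each patch.

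Next I would verify that each $K_i$ is Glaeser-stable as a classical bundle. The key point is that the chosen jets $\{P_y\}$ come from the Glaeser-stable M-bundle $\mathcal{H}$ under a fixed coordinate choice, so classical Glaeser compatibility for $K_i$ follows from Glaeser compatibility for $\mathcal{H}$ in the $R_i$-coordinate system. Theorem \ref{thm:v-valued} then supplies, on each $U_i$, a $C^1$ function $F_i$ with $J_{y_{R_i}} F_i = P_y$ for all $y \in E \cap U_i$, so the graph $G_{R_i}(F_i)$ is a local manifold extension of $E \cap U_i$ whose tangent plane at each $y$ is exactly $f(y)$. For the Furthermore clause, I would anchor $F_i$ at the prescribed point $x$ via the ``matching at a point'' conclusion of Theorem \ref{thm:v-valued}, using the given jet $(Q, P)$.

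Then I would paste these local graphs into a single manifold $\mathcal{M}$ via a smooth partition of unity $\{\phi_j\}$ as in the proof of Theorem \ref{thm:main}. Crucially, at any $y \in E$, every contributing interpolant, pulled back to a common coordinate system $R$, yields a function $G_j$ with $G_j(y_R) = y_R^\perp$ and $\nabla G_j(y_R) = A(y)$. The pasted function $G = \sum_j \phi_j G_j$ then satisfies
\begin{equation*}
    \nabla G(y_R) = \sum_j G_j(y_R)\,\nabla \phi_j(y_R) + \sum_j \phi_j(y_R)\,\nabla G_j(y_R) = 0 + A(y) = A(y),
\end{equation*}
using $\sum_j \phi_j \equiv 1$ and $\sum_j \nabla \phi_j \equiv 0$, so $p(T_y \mathcal{M}) = f(y)$ for every $y \in E$. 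After smoothing the boundary by intersecting with a generic $C^\infty$ hypersurface chosen, via Theorem \ref{thm:a.e. is transverse}, to miss $E$, properness together with consistency of $\mathcal{H}$ (the latter applied through Lemma \ref{lemma:jet realization well-defined}) upgrades the tangent-plane match to the full statement $(R, J_{y_R} G) \in H(y)$ for any local coordinate representation of $\mathcal{M}$ near $y$, proving that $\mathcal{M}$ is a section of $\mathcal{H}$.

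The main obstacle is verifying the classical Glaeser-stability of the singleton-fiber bundles $K_i$: one must translate the Glaeser-stability hypothesis on $\mathcal{H}$, which is stated across all $Q \in O(n)$, into classical Glaeser-stability in the single coordinate system $R_i$. This reduction is clean for $m=1$ because the tangent plane uniquely determines the $1$-jet once the constant term is pinned down by properness, but it breaks down for $m \ge 2$, where the jet carries higher-order information beyond the tangent plane and the singleton-fiber reduction is unavailable.
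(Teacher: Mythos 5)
Your proposal is correct in outline, but it takes a genuinely different route from the paper's. The paper does \emph{not} use the section $f$ of $\widehat{\mathcal{H}}$ to pin down the $1$-jet of the local interpolants at every point of $E$; it only uses $f$ to align the coordinate frames of nearby patches, constructs local interpolants whose jets merely lie in the (possibly large) affine fibers $H^{Q_i}(x)$, and then shows that the pasting over the reference manifold $\M_{put}$ produces, at each point of $E$, a jet equal to a convex combination $\sum_i\Theta_i(0)\nabla F_i(0)$ of the local jets, which stays in the fiber because each $H^{Q}(x)$ is an affine space (consistency then handles other coordinate systems). You instead use $f$ to reduce to a singleton-fiber classical bundle: properness pins the constant term, $f(y)$ pins the gradient, and for $m=1$ that is the whole jet, so all local interpolants agree to first order along $E$ and pasting preserves the jet trivially. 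What your route buys is that the final step needs no affine-combination argument and no second-order analysis of the projection onto $\M_{put}$; what it costs is the one step you correctly flag as the main obstacle, and which deserves to be written out rather than asserted: classical Glaeser stability of the singleton bundle $K_i$ is \emph{not} an immediate restatement of the M-bundle's stability, because the M-bundle condition only produces \emph{some} compatible jets $P_j\in H^{R_i}(x_j)$, not your distinguished ones $P_{x_j}$. The bridge is the estimate
\begin{equation*}
    \bigl|\bigl(P_{x_i}-P_{x_j}\bigr)((x_i)_{R_i})\bigr|
    \le \bigl|\bigl(P_i-P_j\bigr)((x_i)_{R_i})\bigr|
    + \|\nabla P_j - A(x_j)\|\,\bigl|(x_i)_{R_i}-(x_j)_{R_i}\bigr|,
\end{equation*}
where $\|\nabla P_j - A(x_j)\|\le \|\nabla P_j - A(y_0)\| + \|A(y_0)-A(x_j)\|$ is small by the first-order Glaeser bound against the base jet $P_{y_0}$ together with (uniform) continuity of $f$; with this, your reduction does go through for $m=1$, and your closing remark correctly identifies why it is unavailable for $m\ge 2$. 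Two smaller points you defer implicitly and should cite: injectivity of $y\mapsto y_{R_i}$ on each patch (it follows from properness plus stability, as in Lemma \ref{lemma:construct local interpolant}), and the fact that pasting graphs over \emph{different} coordinate systems is well defined, for which you still need the paper's $\M_{put}$/disc-bundle (or an equivalent common-base) construction rather than a bare partition of unity on $\R^n$.
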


We conjecture Theorem \ref{thm:m=1 case} holds for $m\ge2$; however, this does not appear to follow from our current pasting process in Section \ref{sec:pasting}.

\section{Some Technical Lemmas}\label{sec:technical results}

\begin{lemma}\label{lemma:classify compatbility}
    Let $Q,R\in O(n)$, $x\in\R^n$, and $F\in C^m(\R^d,\R^{n-d})$. Then, there exist $\delta>0$ and $F'\in C^m(\R^d,\R^{n-d})$ such that
    \begin{equation*}
        G_Q(F)\cap B(x,\delta)=G_R(F')\cap B(x,\delta)
    \end{equation*}
    if and only if the top $d\times d$ minor of
    \begin{equation*}
        A:=R^TQ\begin{bmatrix}
        I_d\\ \nabla F(x_Q)
    \end{bmatrix}
    \end{equation*}
    is invertible.
\end{lemma}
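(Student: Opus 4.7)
The plan is to set up the standard parametrization of $G_Q(F)$ and reduce the equivalence to one application of the inverse function theorem. Define $\Phi:\R^d\to\R^n$ by $\Phi(v)=Q\begin{bmatrix} v \\ F(v)\end{bmatrix}$; this is a $C^m$ injective parametrization of $G_Q(F)$ with $\Phi(x_Q)=x$. Let $\pi:\R^n\to\R^d$ denote the first block of the $R$-coordinate map, $\pi(y)=\begin{bmatrix}I_d & 0\end{bmatrix} R^Ty=y_R$. Then $G_Q(F)$ is locally near $x$ the graph of a $C^m$ function in $R$ coordinates if and only if $\pi\circ\Phi:\R^d\to\R^d$ is a local $C^m$ diffeomorphism at $x_Q$.

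By the inverse function theorem, this in turn holds if and only if $\nabla(\pi\circ\Phi)(x_Q)$ is invertible. Using the chain rule together with $\nabla\Phi(x_Q)=Q\begin{bmatrix}I_d\\ \nabla F(x_Q)\end{bmatrix}$, one computes
\begin{equation*}
\nabla(\pi\circ\Phi)(x_Q)=\begin{bmatrix}I_d & 0\end{bmatrix}R^TQ\begin{bmatrix}I_d\\ \nabla F(x_Q)\end{bmatrix}=\begin{bmatrix}I_d & 0\end{bmatrix}A,
\end{equation*}
which is precisely the top $d\times d$ minor of $A$. The lemma will follow once this reformulation is combined with a construction of $F'$ in one direction and a tangent-space argument in the other.

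For the forward direction, suppose the top $d\times d$ minor of $A$ is invertible. The inverse function theorem produces open neighborhoods $U\ni x_Q$ and $V\ni x_R$ together with a $C^m$ diffeomorphism $\pi\circ\Phi:U\to V$ whose inverse we denote $\Psi:V\to U$. Setting $\tilde F'(w):=\begin{bmatrix}0 & I_{n-d}\end{bmatrix}R^T\Phi(\Psi(w))$ on $V$, we obtain $R\begin{bmatrix}w\\ \tilde F'(w)\end{bmatrix}=\Phi(\Psi(w))\in G_Q(F)$ for every $w\in V$. A smooth cutoff equal to $1$ on a smaller neighborhood of $x_R$ and supported in $V$ extends $\tilde F'$ to some $F'\in C^m(\R^d,\R^{n-d})$; choosing $\delta>0$ small enough ensures that every point of $B(x,\delta)$ lying on either graph has $R$-projection inside the set where $F'=\tilde F'$, yielding $G_Q(F)\cap B(x,\delta)=G_R(F')\cap B(x,\delta)$.

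Conversely, if such $F'$ and $\delta$ exist, then the two graphs coincide as $C^m$ manifolds inside $B(x,\delta)$, so their common tangent space at $x$ is on the one hand $x+\Col(Q\begin{bmatrix}I_d\\ \nabla F(x_Q)\end{bmatrix})$ and on the other hand the graph of the linear map $\nabla F'(x_R)$ in $R$ coordinates, which projects bijectively onto $\R^d$ under $\pi$. The matrix of $\pi$ restricted to the $Q$-side tangent subspace (expressed in the basis given by the columns of $Q\begin{bmatrix}I_d\\ \nabla F(x_Q)\end{bmatrix}$) is $\begin{bmatrix}I_d & 0\end{bmatrix}A$, which must therefore be invertible. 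The main obstacle is the forward-direction bookkeeping—producing a \emph{global} $F'\in C^m(\R^d,\R^{n-d})$ and verifying the ball-equality after cutting off—but this is routine once the inverse function theorem has been applied.
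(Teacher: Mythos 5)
Your proof is correct and takes essentially the same approach as the paper's: both reduce the statement to whether the tangent plane of $G_Q(F)$ at $x$ (the column space of $A$ written in $R$-coordinates) is a graph over the first $d$ of the $R$-coordinates, i.e.\ to invertibility of the top $d\times d$ block of $A$. The only difference is level of detail: the paper argues purely at the level of column spaces via Gaussian elimination and leaves the inverse-function-theorem passage between the tangent-plane condition and the local-graph condition implicit, whereas you carry that step out explicitly (constructing $F'$ and giving the tangent-space converse), with both arguments tacitly using $x\in G_Q(F)$, as holds in all of the lemma's applications.
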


In either case, we say $G_Q(F)$ and $R$ are \textit{compatible} near $x$.

\begin{proof}
    The tangent plane $T_x[G_Q(F)]$ is parallel to the plane spanned by the columns of $Q\begin{bmatrix}
        I_d\\ \nabla F(x_Q)
    \end{bmatrix}$. We ask if there exists an $(n-d)\times d$ matrix $B$ such that $R\begin{bmatrix}
            I_d\\ B
        \end{bmatrix}$
    has the same column space as $Q\begin{bmatrix}
        I_d\\ \nabla F(x_Q)
    \end{bmatrix}$, so that $T_x[G_Q(F)]$ may be viewed as an affine function in $R$ coordinates. Equivalently, we ask that
    \begin{equation*}
        \Col(A)=\Col\left(\begin{bmatrix}
            I_d\\ B
        \end{bmatrix}\right).
    \end{equation*}
    By basic properties of Gaussian elimination, this is true if and only if the top $d\times d$ minor of $A$ is invertible.
\end{proof}

\begin{lemma}\label{lemma:jet realization well-defined}
    Let $x\in\R^n, (Q,P)\in H_0(x)$. If $G_Q(P)$ and $R\in O(n)$ are compatible near $x$, then $J_{x,R}(Q,P)$ is well-defined.

    Furthermore,
    \begin{equation}\label{eq:still in H_0}
        J_{x_R}F'(x_R)=x_R^\perp;
    \end{equation}
    thus, $(R,J_{x_R}F')=J_{x,R}(Q,P)\in H_0(x)$.
\end{lemma}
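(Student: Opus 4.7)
The plan is to express the change from $Q$- to $R$-coordinates as an explicit chain-rule computation and then reduce well-definedness to the standard fact that the $m$-jet of a composition (or local inverse) depends only on the $m$-jets of its components.

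First, I would write $R^TQ = \begin{bmatrix} A_{11} & A_{12} \\ A_{21} & A_{22} \end{bmatrix}$ with $A_{11}\in\R^{d\times d}$ and $A_{22}\in\R^{(n-d)\times(n-d)}$. A point of $G_Q(F)$ with $Q$-coordinates $(z,F(z))$ then has $R$-coordinates $(\phi(z),\psi(z))$, where $\phi(z):=A_{11}z+A_{12}F(z)$ and $\psi(z):=A_{21}z+A_{22}F(z)$. Observe that $\nabla\phi(x_Q)=A_{11}+A_{12}\nabla F(x_Q)$ is precisely the top $d\times d$ minor of the matrix appearing in Lemma \ref{lemma:classify compatbility}, so compatibility of $G_Q(F)$ with $R$ says exactly that $\nabla\phi(x_Q)$ is invertible; note also that this condition depends on $F$ only through $\nabla F(x_Q)=\nabla P(x_Q)$, hence only on $P$. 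The inverse function theorem then furnishes a local $C^m$ inverse $\phi^{-1}$ with $\phi^{-1}(x_R)=x_Q$, and $F':=\psi\circ\phi^{-1}$ is the local graph description in $R$-coordinates, so $J_{x,R}(Q,P)=(R,J_{x_R}F')$ by definition.

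For well-definedness, I would take two choices $F_1,F_2\in C^m(\R^d,\R^{n-d})$ with $J_{x_Q}F_1=J_{x_Q}F_2=P$ and show that the corresponding $F_1',F_2'$ satisfy $J_{x_R}F_1'=J_{x_R}F_2'$. Since $\phi_i,\psi_i$ depend affinely on $F_i$, one immediately has $J_{x_Q}\phi_1=J_{x_Q}\phi_2$ and $J_{x_Q}\psi_1=J_{x_Q}\psi_2$. The key step is to invoke two standard facts from jet calculus: (i) the $m$-jet at $x_R$ of a local inverse is determined by the forward map's $m$-jet at $x_Q$ together with invertibility of its derivative there, obtained by differentiating $\phi_i\circ\phi_i^{-1}=\mathrm{id}$ up to order $m$ and solving the resulting triangular system for the derivatives of $\phi_i^{-1}$; and (ii) the $m$-jet of a composition depends only on the $m$-jets of its factors (Fa\`a di Bruno). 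Combining these gives $J_{x_R}\phi_1^{-1}=J_{x_R}\phi_2^{-1}$ and then $J_{x_R}F_1'=J_{x_R}F_2'$, so $J_{x,R}(Q,P)$ is well-defined.

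Finally, for \eqref{eq:still in H_0} I would note that $(Q,P)\in H_0(x)$ forces $F(x_Q)=P(x_Q)=x_Q^\perp$ (since $F$ and $P$ agree to order $m$ at $x_Q$, in particular in value), so $x\in G_Q(F)=G_R(F')$ locally, whence $F'(x_R)=x_R^\perp$. Evaluating the Taylor polynomial at its base point returns the function's value, giving $J_{x_R}F'(x_R)=F'(x_R)=x_R^\perp$ and hence $(R,J_{x_R}F')\in H_0(x)$. The main obstacle is the jet-calculus step in the third paragraph; while essentially textbook, making it fully rigorous requires checking that the triangular Fa\`a di Bruno-type system determining the jet of $\phi^{-1}$ has a unique solution in terms of $J_{x_Q}\phi$ alone, with no dependence on higher-order behavior of $F$.
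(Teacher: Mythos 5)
Your proposal is correct and follows essentially the same route as the paper: both reduce well-definedness to the observation that, after changing coordinates via the blocks of $R^TQ$ (resp.\ $Q^TR$), the derivatives of $F'$ at $x_R$ up to order $m$ are determined by $Q$, $R$, and $\nabla F(x_Q),\ldots,\nabla^m F(x_Q)$, i.e.\ by $P$ alone, and both verify \eqref{eq:still in H_0} by noting $x$ lies on the local graph. The only difference is organizational: you parametrize the graph explicitly and invoke the inverse function theorem together with standard jet calculus (jet of a local inverse and Fa\`a di Bruno), whereas the paper differentiates the implicit graph relation and runs an induction on the order of the derivative; these amount to the same computation.
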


\begin{proof}
Let $F\in C^m(\R^d,\R^{n-d})$ such that $J_{x_Q}F=P$. All $y\in G_Q(F)$ satisfy
\begin{equation}\label{eq:graph of F in Q}
    y_Q^\perp=F(y_Q).
\end{equation}

Since $G_Q(P)$ and $R$ are compatible near $x$, so are $G_Q(F)$ and $R$, as this property depends only on $\nabla F$ by Lemma \ref{lemma:classify compatbility}. Thus, there exist $\delta>0$ and $F'\in C^m(\R^d,\R^{n-d})$ such that $y\in G_Q(F)\cap B(x,\delta)$ satisfy
\begin{equation}\label{eq:graph of F' in R}
    y_R^\perp=F'(y_R).
\end{equation}

In particular,
\begin{equation*}
    x_R^\perp=F'(x_R)
\end{equation*}
since $x\in G_R(F')$. This establishes \eqref{eq:still in H_0}.

By \eqref{eq:def coords},
\begin{equation*}
    \begin{bmatrix}
        y_Q\\ y_Q^\perp
    \end{bmatrix}=Q^TR \begin{bmatrix}
        y_R\\ y_R^\perp
    \end{bmatrix}:=\begin{bmatrix}
        A_1 & A_2\\
        A_3& A_4
    \end{bmatrix}\begin{bmatrix}
        y_R\\ y_R^\perp
    \end{bmatrix},
\end{equation*}
thus \eqref{eq:graph of F in Q} becomes
\begin{equation}\label{eq:for taking derivatives}
    A_3y_R+A_4y_R^\perp=F(A_1y_R+A_2y_R^\perp).
\end{equation}

Let $S(k)$ represent the following statement:
\begin{equation*}
    \nabla^k F'(y_R)=f(Q,R,\nabla F(A_1y_R+A_2y_R^\perp),\ldots,\nabla^k F(A_1y_R+A_2y_R^\perp))
\end{equation*}
for $y_R$ in a neighborhood of $x_R$, where $f$ is infinitely differentiable.

Our goal is to prove $S(k)$ for $1\le k\le m$. We do so by induction on $k$.

Let $k=1$. Taking the derivative of \eqref{eq:for taking derivatives} with respect to $y_R$, one has
\begin{equation*}
    A_3+A_4\frac{\partial y_R^\perp}{\partial y_R}=\nabla F(A_1y_R+A_2y_R^\perp)(A_1+A_2\frac{\partial y_R^\perp}{\partial y_R}).
\end{equation*}

The above is a linear equation in $\nabla F'=\frac{\partial y_R^\perp}{\partial y_R}$, the solution having $C^\infty$ dependence on the parameters in the equation: $A_i$ (meaning $Q$ and $R$) and $\nabla F(A_1y_R+A_2y_R^\perp)$.

Suppose $S(k)$ holds. By a simple application of the multivariate Chain Rule, one sees that the $(k+1)$-st order derivatives of $F'$ depend only on $Q,R$, $\nabla F(A_1y_R+A_2y_R^\perp),\ldots,\nabla^{k+1} F(A_1y_R+A_2y_R^\perp)$, and $\nabla F'(A_1y_R+A_2y_R^\perp)$, which in turn depends only on $Q,R$, and $\nabla F(A_1y_R+A_2y_R^\perp)$ by the base case $k=1$. Thus, we have established $S(k+1)$ and by induction, $S(k)$ holds for all $k$ up to $m$.

In particular, $\nabla F'(x_R),\ldots,\nabla^m F'(x_R)$ depend only on $Q,R$, and $\nabla F(x_Q),\ldots,\nabla^m F(x_Q)$ (which in turn depend only on $P$), proving our claim.

\end{proof}

While we will not make direct use of the following corollary, we include it to shed light on the situation and begin to show that $(Q_1,P_1)\sim_x (Q_2,P_2)$ if $J_{x,Q_2}(Q_1,P_1)=(Q_2,P_2)$ is an equivalence relation on $O(n)\times \P^m(\R^d,\R^{n-d})$.

\begin{corollary}\label{cor:only cor?}
 Fix $x\in\R^n, P\in\P^m(\R^d,\R^{n-d}), Q\in O(n)$ such that $P(x_Q)=x_Q^\perp$. If $G_Q(P)$ and $R\in O(n)$ are compatible near $x$, then $J_{x,Q}(J_{x,R}(Q,P))=(Q,P)$.
\end{corollary}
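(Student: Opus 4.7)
The plan is to unwind the definition of jet realization and use the freedom to choose representative $C^m$ functions consistently on both sides. Start by picking any $F\in C^m(\R^d,\R^{n-d})$ with $J_{x_Q}F=P$. By Lemma \ref{lemma:classify compatbility}, compatibility of $G_Q(P)$ with $R$ near $x$ depends only on $\nabla P(x_Q)=\nabla F(x_Q)$, so $G_Q(F)$ is also compatible with $R$ near $x$. Hence there exist $\delta>0$ and $F'\in C^m(\R^d,\R^{n-d})$ with $G_Q(F)\cap B(x,\delta)=G_R(F')\cap B(x,\delta)$. By definition, $(R,P'):=J_{x,R}(Q,P)=(R,J_{x_R}F')$.

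Next, verify that $J_{x,Q}(R,P')$ is well-defined, i.e., that $G_R(P')$ is compatible with $Q$ near $x$. By Lemma \ref{lemma:classify compatbility} this only depends on $\nabla P'(x_R)=\nabla F'(x_R)$; and $G_R(F')$ agrees with $G_Q(F)$ near $x$, which is trivially compatible with $Q$ (it is literally the graph of $F$ in $Q$-coordinates). So Lemma \ref{lemma:jet realization well-defined} applies.

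To compute $J_{x,Q}(R,P')$, use $F'$ itself as the representative of $P'$ in $R$-coordinates (permissible since $J_{x_R}F'=P'$ and Lemma \ref{lemma:jet realization well-defined} guarantees the output is independent of this choice). The $C^m$ function whose graph in $Q$-coordinates locally coincides with $G_R(F')$ near $x$ is then, by construction, $F$ itself. Therefore $J_{x,Q}(R,P')=(Q,J_{x_Q}F)=(Q,P)$, as required.

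There is no real obstacle; the content of the corollary is that the two realization maps invert each other, and the only substantive input is the well-definedness provided by Lemma \ref{lemma:jet realization well-defined}. The proof essentially reduces to choosing representatives $F$ and $F'$ on the two sides coherently so that the local graphs coincide, after which the two invocations of $J_{x,\cdot}$ simply undo one another. The mild compatibility check in the middle step is automatic because both compatibility and the realization map only see first-order data at $x$, which is transported faithfully.
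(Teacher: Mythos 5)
Your proposal is correct and follows essentially the same route as the paper: realize $(Q,P)$ in $R$-coordinates via a local graph identity $G_Q(F)\cap B(x,\delta)=G_R(F')\cap B(x,\delta)$, then use the well-definedness from Lemma \ref{lemma:jet realization well-defined} to compute the return realization with the same pair, so that $J_{x,Q}(R,J_{x_R}F')=(Q,J_{x_Q}F)=(Q,P)$. The only cosmetic difference is that the paper takes the polynomial $P$ itself as the representative $F$, while you allow an arbitrary $F$ with $J_{x_Q}F=P$ and additionally spell out the compatibility check for the reverse realization, which the paper leaves implicit.
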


\begin{proof}
    By Lemma \ref{lemma:jet realization well-defined}, to compute $J_{x,R}(Q,P)$ we may choose $\delta>0$ and $F'\in C^m(\R^d,\R^{n-d})$ such that $G_R(F')\cap B(x,\delta)=G_Q(P)\cap B(x,\delta)$ so $J_{x,R}(Q,P)=(R,J_{x_R}F')$. (Here, we choose $P$ as a $C^m$ function $F$ such that $J_{x_Q}F=P$.)

    To compute $J_{x,Q}(R,J_{x_R}F')$ we may find $\delta'>0$ and $F''\in C^m(\R^d,\R^{n-d})$ such that $G_R(F')\cap B(x,\delta')=G_Q(F'')\cap B(x,\delta')$. However, we already know from our prior computation that taking $F''=P$ and $\delta'=\delta$ works. We observe $J_{x_Q}P=P$ to complete the proof.
\end{proof}

\begin{lemma}[Local Interpolants]\label{lemma:construct local interpolant}
    Let $(H(x))_{x\in E}$ be a proper, Glaeser-stable M-bundle. Let $Q\in O(n)$ and $x_0\in E$. 

If $P\in H^Q(x_0)$, then there exists $\delta>0$ and $F\in C^m(\R^d,\R^{n-d})$ such that $G_Q(F)\supset E\cap B(x_0,\delta)$. Furthermore, in this scenario, one may take $J_{(x_0)_Q}F=P$ and $J_{x_Q}F\in H^Q(x)$ for all $x\in E\cap B(x_0,\delta)$.
    
Conversely, if there exists $\delta>0$ and $F\in C^m(\R^d,\R^{n-d})$ such that $G_Q(F)\supset E\cap B(x_0,\delta)$, then $P\in H_{l^*}^Q(x_0)$.
\end{lemma}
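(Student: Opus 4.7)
The forward direction will reduce to the classical vector-valued Whitney extension theorem (Theorem \ref{thm:v-valued}) applied to a classical bundle on a subset of $\R^d$; the converse is a direct induction on the Glaeser refinement index, analogous to the argument in Lemma \ref{lemma:forward direction}.

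For the forward direction, the plan is to show that the projection $\pi_Q: x\mapsto x_Q$ is injective on $E\cap B(x_0,\delta)$ for sufficiently small $\delta>0$, and then transport the M-bundle to a classical bundle on $\pi_Q(E\cap \overline{B(x_0,\delta/2)})\subset\R^d$. To establish injectivity, apply Glaeser-stability of the M-bundle at $P\in H^Q(x_0)$ with small $\epsilon>0$: for $x_1,x_2\in E\cap B(x_0,\delta)$ there exist $P_i\in H^Q(x_i)$ with $|\partial^\alpha(P_1-P_2)((x_1)_Q)|\le\epsilon|(x_1)_Q-(x_2)_Q|^{m-|\alpha|}$; if $(x_1)_Q=(x_2)_Q$, the $|\alpha|=0$ case together with properness ($P_i((x_i)_Q)=(x_i)_Q^\perp$) forces $(x_1)_Q^\perp=(x_2)_Q^\perp$, hence $x_1=x_2$. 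Setting $K:=E\cap\overline{B(x_0,\delta/2)}$ and $\tilde E:=\pi_Q(K)$, the map $\pi_Q|_K:K\to\tilde E$ is a continuous bijection of compact sets, hence a homeomorphism, and I define a classical vector-valued bundle on $\tilde E$ by $\tilde H(y):=H^Q(\pi_Q^{-1}(y))$.

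Next, I verify that $(\tilde H(y))_{y\in\tilde E}$ is Glaeser-stable in the classical sense of Section \ref{sec:Fefferman solution}. The compatibility inequalities for $\tilde H$ at $y_0\in\tilde E$ are literally identical to those for $H^Q$ at $\pi_Q^{-1}(y_0)$; the only discrepancy is that the former uses a ball in $\R^d$ around $y_0$ while the latter uses a ball in $\R^n$ around $\pi_Q^{-1}(y_0)$, and uniform continuity of $\pi_Q^{-1}$ on the compact set $\tilde E$ translates between them. Theorem \ref{thm:v-valued} then yields $F\in C^m(\R^d,\R^{n-d})$ with $J_{(x_0)_Q}F=P$ and $J_yF\in\tilde H(y)$ for every $y\in\tilde E$; properness gives $F(x_Q)=J_{x_Q}F(x_Q)=x_Q^\perp$ for $x\in K$, so $K\subset G_Q(F)$, delivering the forward direction together with the ``furthermore'' claims.

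For the converse, let $P_x:=J_{x_Q}F$ for $x\in E\cap B(x_0,\delta)$ and prove by induction on $k\ge 0$ that $(Q,P_x)\in H_k(x)$. The base case $k=0$ is immediate from $F(x_Q)=x_Q^\perp$. For the inductive step, Taylor's theorem applied to $F$ (essentially repeating the argument in Lemma \ref{lemma:forward direction}, which proceeds by choosing $P_i=J_{(x_i)_Q}F$) produces, for any $\epsilon>0$, a neighborhood on which the jets $P_{x_i}$ satisfy the Glaeser-type bounds in the definition of M-bundle refinement, lifting membership from $H_k$ to $H_{k+1}$. Iterating up to $k=l^*$ yields $P:=J_{(x_0)_Q}F\in H_{l^*}^Q(x_0)$. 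The main technical hurdle is the injectivity step in the forward direction and the translation between the $\R^n$-ball and $\R^d$-ball formulations of Glaeser-stability; properness forces injectivity, and uniform continuity on compact sets bridges the two ball conditions, after which the classical Whitney theorem does all of the hard extension work.
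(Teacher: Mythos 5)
Your proposal is correct and follows the same overall strategy as the paper's proof: use Glaeser stability at $P\in H^Q(x_0)$ together with properness to show that $x\mapsto x_Q$ is injective on $E$ near $x_0$, transport $(H^Q(x))$ to a classical vector-valued bundle on the projected set, verify classical Glaeser stability there, and invoke Theorem \ref{thm:v-valued}; your converse, propagating Taylor compatibility of the jets $J_{x_Q}F$ through each round of refinement, is the paper's argument verbatim (it points back to Lemma \ref{lemma:forward direction}). The one genuine difference is how you pass between the $\R^n$-ball in the M-bundle refinement and the $\R^d$-ball in the classical one: the paper derives a quantitative Lipschitz-type bound $|y_i-y_0|\le C''\,|(y_i)_Q-(y_0)_Q|$ from properness and the differentiability of $P_0$ (its \eqref{eq:4.4.1}--\eqref{eq:4.4.3}) and then takes $\delta=\delta'/C''$, whereas you restrict to the compact set $K=E\cap\overline{B(x_0,\delta/2)}$, observe that $\pi_Q|_K$ is a continuous bijection of compacta, hence a homeomorphism, and use uniform continuity of $\pi_Q^{-1}$. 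This softer route works, and it has the incidental benefit that the base set of your classical bundle is visibly compact, as the setup of Theorem \ref{thm:v-valued} presumes (the paper's $E'$ is defined via an open ball). Two points you should make explicit: Theorem \ref{thm:v-valued} also requires every fiber $\tilde H(y)$ to be nonempty, which does follow from the very stability application you use for injectivity (the condition asserts the existence of some $P_i\in H^Q(x_i)$ for every nearby $x_i\in E$), and in the converse's inductive step the radius at each $x$ must be shrunk so that the test points remain inside $B(x_0,\delta)$, where the inductive hypothesis holds, exactly as in Lemma \ref{lemma:forward direction}.
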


\begin{proof}
Suppose $P\in H^Q(x_0)$. Letting $\epsilon=1$, there exists $\eta>0$ such that whenever $x_1,\ldots,x_\kb\in E\cap B(x_0,\eta)$, there exist $P_1,\ldots,P_\kb\in H^Q(x_i)$ satisfying
\begin{equation}\label{eq:P_i comp for L 4.4}
    |\da(P_i-P_j)((x_i)_Q)|\le |(x_i)_Q-(x_j)_Q|^{m-|\alpha|} \text{ for all } |\alpha|\le m, 0\le i,j\le \kb.
\end{equation}

Let $E'=\{x_Q:x\in E\cap B(x_0,\eta)\}$. We have shown the bundle $(H'(x_Q))_{x_Q\in E'}$, defined by
\begin{equation*}
    H'(x_Q)=\{P\in\P(\R^d,\R^{n-d}):P\in H^Q(x)\},
\end{equation*}
is nontrivial. Note this bundle is well-defined in the sense that if $x,x'\in E\cap B(x_0,\eta)$, then $x_Q=x'_Q$ implies $x=x'$. Suppose for the sake of contradiction that there exist $x,x'\in E\cap B(x_0,\eta)$ such that $x\neq x'$, yet $x_Q=(x')_Q$. However, if $P\in H^Q(x)$ and $P'\in H^Q(x')$, then $P(x_Q)=x_Q^\perp$ and $P'(x'_Q)=(x')_Q^\perp$, contradicting \eqref{eq:P_i comp for L 4.4} in the case $\alpha=0, P_i=P, P_j=P'$.

Our goal is to show that the Glaeser stability of $(H(x))_{x\in E}$ in the M-bundle sense implies the Glaeser stability of $(H'(x))_{x\in E'}$ in the usual sense.

Let $(y_0)_Q\in E', P_0\in H'((y_0)_Q)=H^Q(y_0)$ and $\epsilon>0$. Choose $(y_1)_Q,\ldots,(y_\kb)_Q\in E'\cap B((y_0)_Q,\delta)$ for $\delta>0$ to be determined. By Glaeser stability of $(H(x))_{x\in E}$ (in the M-bundle sense), there exists $0<\delta'<\eta$ such that if $y_1,\ldots,y_\kb\in E\cap B(y_0,\delta')$, there exist $P_1,\ldots,P_\kb\in \P(\R^d,\R^{n-d})$ such that
\begin{equation*}
    P_i\in H^Q(y_i) \text{ for all } 0\le i\le \kb
\end{equation*}
and
\begin{equation*}
    |\da(P_i-P_j)((y_i)_Q)|\le \epsilon|(y_i)_Q-(y_j)_Q|^{m-|\alpha|} \text{ for all } |\alpha|\le m, 0\le i,j\le \kb.
\end{equation*}

Letting $\alpha=0,\eps=1,$ and $j=0$, we have
\begin{equation}\label{eq:4.4.1}
    |(y_i)_Q^\perp-P_0((y_0)_Q)|\le |(y_i)_Q-(y_0)_Q|.
\end{equation}

Since $P_0$ is differentiable, we have
\begin{equation}\label{eq:4.4.2}
    |P_0((y_i)_Q)-(y_0)_Q^\perp|=|P_0((y_i)_Q)-P_0((y_0)_Q)|\le C|(y_i)_Q-(y_0)_Q|
\end{equation}
for $C$ depending only on $P_0$ and $\delta'$.

Thus, by the triangle inequality, \eqref{eq:4.4.1}, and \eqref{eq:4.4.2},
\begin{equation}\label{eq:4.4.3}
    |(y_i)_Q^\perp-(y_0)_Q^\perp|\le |(y_i)_Q^\perp-P_0((y_i)_Q)|+|P_0((y_i)_Q)-(y_0)_Q^\perp|\le C'|(y_i)_Q-(y_0)_Q|
\end{equation}
and by \eqref{eq:4.4.3},
\begin{equation*}
     |y_i-y_0|\le |(y_i)_Q^\perp-(y_0)_Q^\perp|+ |(y_i)_Q-(y_0)_Q|\le C'' |(y_i)_Q-(y_0)_Q|
\end{equation*}
where $C''$ depends only on $P_0$ and $\delta'$.

By taking $\delta=\delta'/C''$, we obtain that for any $y_1,\ldots,y_\kb\in E'\cap B((y_0)_Q,\delta)$, there exist $P_1,\ldots,P_\kb\in \P(\R^d,\R^{n-d})$ such that
\begin{equation*}
    P_i\in H'(y_i) \text{ for all } 0\le i\le \kb
\end{equation*}
and
\begin{equation*}
    |\da(P_i-P_j)((y_i)_Q)|\le \epsilon|(y_i)_Q-(y_j)_Q|^{m-|\alpha|} \text{ for all } |\alpha|\le m, 0\le i,j\le \kb.
\end{equation*}
Thus, $(H'(y))_{y\in E'}$ is Glaeser stable (in the usual sense).

Furthermore, since $(H(x))_{x\in E}$ is proper,
\begin{equation*}
    P(x_Q)=x_Q^\perp \text{ for all } x_Q \in E', P\in H'(x_Q).
\end{equation*}

By Theorem \ref{thm:v-valued}, there exists $F\in C^m(\R^d,\R^{n-d})$ such that $F(x_Q)=x_Q^\perp$ and $J_{x_Q}F\in H'(x_Q)=H^Q(x)$ for all $x_Q\in E'$; thus, $G_Q(F)\supset E\cap B(x_0,\delta)$. Furthermore, we may take $F$ such that $J_{(x_0)_Q}F=P$.

For the reverse direction, suppose there exists $\delta>0$ and $F\in C^m$ such that $G_Q(F)\supset E\cap B(x_0,\delta)$. For all $x\in E\cap B_{\delta}(x_0)$, $(Q,J_{x_Q}F)\in H_0(x)$ since $F(x_Q)=x_Q^\perp$. Since $F\in C^m(\R^d,\R^{n-d})$, the jets $J_{x_Q}F$ are Taylor compatible and the $(Q,J_{x_Q}F)$ survive each round of Glaeser refinement, remaining in $H_{l^*}(x_Q)$, as in the proof of Lemma \ref{lemma:forward direction}. In particular, $(Q,J_{(x_0)_Q}F)\in H_{l^*}(x_0)$.

\end{proof}

\begin{lemma}\label{lemma:diff Q okay}
    Let $E\subset \R^n$ be compact and $(H_{l^*}(x))_{x\in E}$ be as previously defined. Let $x\in E$ and $(Q,P)\in H_{l^*}(x)$. If $G_Q(P)$ and $R\in O(n)$ are compatible near $x$, then $J_{x,R}(Q,P)\in H_{l^*}(x)$. That is, $(H_{l^*}(x))_{x\in E}$ is consistent.
\end{lemma}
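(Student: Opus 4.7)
The plan is to reduce this consistency statement to Lemma \ref{lemma:construct local interpolant} (Local Interpolants) combined with Lemma \ref{lemma:classify compatbility} (on compatibility of coordinate systems) and the definition of $J_{x,R}(Q,P)$ from Lemma \ref{lemma:jet realization well-defined}. The key observation is that membership in $H_{l^*}^Q(x)$ is characterized by the existence of a local graph of a $C^m$ function (in $Q$ coordinates) containing $E$ near $x$, and this geometric condition is manifestly coordinate-invariant.

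First, because $(H_{l^*}(x))_{x\in E}$ is Glaeser-stable and proper, I apply the forward direction of Lemma \ref{lemma:construct local interpolant} to $(Q,P)\in H_{l^*}(x)$ to obtain $\delta>0$ and $F\in C^m(\R^d,\R^{n-d})$ with $J_{x_Q}F=P$ and $G_Q(F)\supset E\cap B(x,\delta)$. Since $J_{x_Q}F=P$ we have $\nabla F(x_Q)=\nabla P(x_Q)$, so the top $d\times d$ minor criterion of Lemma \ref{lemma:classify compatbility} evaluated at $x$ is identical for $G_Q(F)$ and $G_Q(P)$. By hypothesis $G_Q(P)$ is compatible with $R$ near $x$, hence so is $G_Q(F)$; applying Lemma \ref{lemma:classify compatbility} again produces $\delta_1\in(0,\delta]$ and $F'\in C^m(\R^d,\R^{n-d})$ with
\begin{equation*}
    G_Q(F)\cap B(x,\delta_1)=G_R(F')\cap B(x,\delta_1).
\end{equation*}
By the very definition of the jet realization in Lemma \ref{lemma:jet realization well-defined}, $J_{x,R}(Q,P)=(R,J_{x_R}F')$.

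Next, since $E\cap B(x,\delta_1)\subset G_Q(F)\cap B(x,\delta_1)=G_R(F')\cap B(x,\delta_1)$, we have $G_R(F')\supset E\cap B(x,\delta_1)$. Invoking the converse direction of Lemma \ref{lemma:construct local interpolant} with the coordinate system $R$ in place of $Q$ and the local graph $G_R(F')$, we conclude $(R,J_{x_R}F')\in H_{l^*}(x)$, which is $J_{x,R}(Q,P)\in H_{l^*}(x)$, as desired.

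I do not anticipate a genuine obstacle here: all the technical content has been packaged into Lemma \ref{lemma:construct local interpolant}, Lemma \ref{lemma:classify compatbility}, and Lemma \ref{lemma:jet realization well-defined}. The only minor point of care is to shrink $\delta$ to $\delta_1$ so that the compatibility of the two local graph descriptions holds on the same ball as the containment of $E$; this is automatic since both conditions hold on some positive-radius ball and one may take the minimum.
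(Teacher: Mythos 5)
Your proposal is correct and follows essentially the same route as the paper's own proof: apply Lemma \ref{lemma:construct local interpolant} to produce a local interpolant $F$ with $J_{x_Q}F=P$, transfer compatibility from $G_Q(P)$ to $G_Q(F)$ via Lemma \ref{lemma:classify compatbility}, rewrite the graph in $R$ coordinates, and invoke the converse direction of Lemma \ref{lemma:construct local interpolant} to place $(R,J_{x_R}F')=J_{x,R}(Q,P)$ in $H_{l^*}(x)$. Your added care about matching radii and about $\nabla F(x_Q)=\nabla P(x_Q)$ only makes explicit what the paper leaves implicit.
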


\begin{proof}
    The M-bundle $(H_{l_*}(x))_{x\in E}$ is proper by definition and Glaeser-stable by Lemma \ref{lemma:termination}. Thus, by Lemma \ref{lemma:construct local interpolant}, there exists $\delta>0$ and $F\in C^m(\R^d,\R^{n-d})$ such that $G_Q(F)\supset E\cap B(x_0,\delta)$. Since $G_Q(P)$ and $R$ are compatible near $x$, so are $G_Q(F)$ and $R$, as compatibility depends only on $\nabla F$ by Lemma \ref{lemma:classify compatbility}. Thus, there exist $\delta'>0$ and $F'\in C^m(\R^d,\R^{n-d})$ such that $G_Q(F)\cap B(x,\delta')=G_R(F')\cap B(x,\delta')$. By Lemma \ref{lemma:construct local interpolant} (this time in the opposite direction), $J_{x,R}(Q,P)=(R,J_{(x_R)}F')\in H_{l^*}(x)$.
\end{proof}

For $S\subset O(n)$, define
\begin{equation*}
    H_0^S(x)=\{(Q,P)\in S\times P^{m}(\R^d,\R^{n-d}): P(x_Q)=x_Q^\perp\}
\end{equation*}
and denote the fibers of its iterated Glaeser refinements by
\begin{equation*}
    H_k^S(x)=\bigcup_{Q\in S}H_k^Q(x).
\end{equation*}

The following theorem reduces the refinements of $(H^Q(x))_{x\in E}$ required in Theorem \ref{thm:main} from those involving all $Q\in O(n)$ to a finite number of $Q$'s.

\begin{theorem}\label{thm:finite number Q's}
    Fix integers $m,n,d$. There exists a finite $S_0\subset O(n)$ such that the following holds.
    
    Let $E\subset \R^n$ be compact. Then then there exists a $C^m$, $d$-dimensional, compact manifold with boundary $\M$ such that $E\subset\mathcal{M}\subset\R^n$ if and only if
\begin{enumerate}
    \item The M-bundle $\mathcal{H}^{S_0}_*=(H^{S_0}_{l^*}(x))_{x\in E}$ is nontrivial and
    \item The Gr-bundle $\widehat{\mathcal{H}^{S_0}_*}=(\hat{H}^{S_0}_{l^*}(x))_{x\in E}$ has a continuous section.
\end{enumerate}

Furthermore, if for $x\in E$, $\Pi\in \hat{H}^{S_0}_{l^*}(x)$ and $\widehat{(\mathcal{H}^{S_0})^*}$ has a section $f$ such that $f(x)=\Pi$, then for all $(Q,P)\in H^{S_0}_{l^*}(x)$ satisfying $Q\oplus \nabla P(x_Q)=\Pi$, one may take $\mathcal{M}$ to satisfy
\begin{equation*}
    \mathcal{M}\cap B(x,\delta)=G_Q(F)\cap B(x,\delta)
\end{equation*}
for some $\delta>0$ and $F\in C^m(\R^d,\R^{n-d})$ satisfying $J_{x_Q}F=P$.
\end{theorem}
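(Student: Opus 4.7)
The plan is to choose a finite $S_0 \subset O(n)$ rich enough that every $d$-plane in $\Gr(n,d)$ can be written as $Q \oplus A$ for some $Q \in S_0$, then observe that Glaeser refinement for M-bundles decouples across different $Q \in O(n)$, reducing the theorem to an application of Theorem \ref{thm:main}. For the choice of $S_0$: by Lemma \ref{lemma:classify compatbility}, for each $Q \in O(n)$ the set $U_Q := \{W \in \Gr(n,d) : W = Q \oplus A \text{ for some matrix } A\}$ is open in $\Gr(n,d)$ and contains $Q \oplus 0$. As $Q$ ranges over $O(n)$, the planes $Q \oplus 0$ exhaust $\Gr(n,d)$, so $\{U_Q\}_{Q \in O(n)}$ is an open cover, and compactness of $\Gr(n,d)$ yields a finite subcover indexed by $S_0 \subset O(n)$. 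For the decoupling: by the definition of Glaeser refinement for M-bundles, $\widetilde{H^Q}(x_0)$ depends only on $(H^Q(y))_{y \in E \cap B(x_0, \delta)}$ and not on $H^R(y)$ for $R \ne Q$. Induction on the refinement step gives $H^{S_0}_k(x) = \{(Q, P) \in H_k(x) : Q \in S_0\}$ for all $k \ge 0$, and in particular $\hat{H}^{S_0}_{l^*}(x) \subset \hat{H}_{l^*}(x)$.

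For the forward direction, suppose there exists a compact $C^m$, $d$-dimensional manifold with boundary $\mathcal{M} \supset E$. At each $x \in E$, pick $Q(x) \in S_0$ with $p(T_x \mathcal{M}) \in U_{Q(x)}$; Lemma \ref{lemma:classify compatbility} then furnishes $F_x \in C^m(\R^d, \R^{n-d})$ and $\delta > 0$ such that $\mathcal{M}$ is locally the graph of $F_x$ in $Q(x)$-coordinates near $x$. The furthermore clause of Lemma \ref{lemma:forward direction} yields $(Q(x), J_{x_{Q(x)}} F_x) \in H_{l^*}(x)$, and since $Q(x) \in S_0$ this lies in $H^{S_0}_{l^*}(x)$, proving condition (1). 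The map $x \mapsto p(T_x \mathcal{M})$ is continuous on $E$ (by continuity of $x \mapsto T_x\mathcal{M}$ into $\Graff(n,d)$ composed with the projection $p$) and equals $Q(x) \oplus \nabla F_x(x_{Q(x)}) \in \hat{H}^{S_0}_{l^*}(x)$, so it is a continuous section of $\widehat{\mathcal{H}^{S_0}_*}$, proving condition (2).

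For the backward direction, the inclusion $\hat{H}^{S_0}_{l^*}(x) \subset \hat{H}_{l^*}(x)$ established above shows that any section of $\widehat{\mathcal{H}^{S_0}_*}$ is also a section of $\widehat{\mathcal{H}_*}$, where $\mathcal{H}_* := (H_{l^*}(x))_{x \in E}$. The M-bundle $\mathcal{H}_*$ is nontrivial (since $H_{l^*}(x) \supset H^{S_0}_{l^*}(x) \ne \emptyset$), proper by construction, Glaeser-stable by Lemma \ref{lemma:termination}, and consistent by Lemma \ref{lemma:diff Q okay}, so Theorem \ref{thm:main} produces the desired compact manifold with boundary $\mathcal{M}$. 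The furthermore clause follows directly from the corresponding clause of Theorem \ref{thm:main} applied to any element $(Q, P) \in H^{S_0}_{l^*}(x) \subset H_{l^*}(x)$ with $Q \oplus \nabla P(x_Q) = \Pi$. The only nonroutine ingredients are the compactness argument for $S_0$ and the decoupling observation in the first paragraph; once those are in hand, the theorem reduces cleanly to Theorem \ref{thm:main}.
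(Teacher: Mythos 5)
Your proposal is correct, and its overall skeleton matches the paper's: the backward direction is exactly the paper's argument (the inclusions $H^{S_0}_{l^*}(x)\subset H_{l^*}(x)$ and $\hat{H}^{S_0}_{l^*}(x)\subset\hat{H}_{l^*}(x)$ plus Lemmas \ref{lemma:termination} and \ref{lemma:diff Q okay} reduce everything to Theorem \ref{thm:main}, including the furthermore clause), and the forward direction likewise repeats Lemma \ref{lemma:forward direction} once one knows that every tangent plane of $\M$ is a graph in $Q$-coordinates for some $Q\in S_0$. Where you genuinely diverge is in producing $S_0$: you take the chart domains $U_Q=\varphi_Q(\R^{(n-d)\times d})$, note they form an open cover of the compact space $\Gr(n,d)$, and extract a finite subcover, whereas the paper makes the explicit choice of $S_0$ equal to the set of $n\times n$ permutation matrices and verifies the covering property by a linear-algebra argument (a rank-$d$ matrix has $d$ rows forming an invertible $d\times d$ block, so the top minor of $Q^TA$ is invertible for a suitable permutation $Q$). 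Your route is shorter and cleaner but non-constructive: it tells you a finite $S_0$ exists without exhibiting one, which sits slightly at odds with the paper's emphasis on reducing the problem to a concrete, finite family of ordinary Glaeser refinements, while the paper's choice is explicit (and the paper remarks it can be trimmed further). Two small points to tighten: the openness of $U_Q$ deserves a word (it follows because membership in $U_Q$ is the invertibility of a $d\times d$ minor depending continuously on $W$, or simply because $\varphi_Q$ is a standard chart, as cited in Lemma \ref{lemma:sub-Grassmannians}), and the decoupling statement $H^{S_0}_k(x)=\{(Q,P)\in H_k(x):Q\in S_0\}$ is really built into the paper's definition $H^S_k(x)=\bigcup_{Q\in S}H^Q_k(x)$ together with the observation, already made in the proof of Lemma \ref{lemma:termination}, that refinement in the $Q$-slot never consults other slots; your induction is fine but is not doing new work.
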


\begin{proof}
Suppose both 1) and 2) above hold.

Since $H_{l^*}^{S_0}(x)\subset H_{l^*}(x)$ for all $x\in E$, the M-bundle $(H_{l^*}(x))_{x\in E}$ is nontrivial. $(H_{l^*}(x))_{x\in E}$ is by definition proper, by Lemma \ref{lemma:diff Q okay} consistent, and by Lemma \ref{lemma:termination} Glaeser-stable. Furthermore, since $\widehat{H^{S_0}_{l^*}}(x)\subset \widehat{H_{l^*}}(x)$ for all $x\in E$, the Gr-bundle $(\widehat{H_{l^*}}(x))_{x\in E}$ has a section. By Theorem \ref{thm:main}, there exists a $d$-dimensional, $C^m$ manifold $\M\supset E$.

Now suppose there exists a $d$-dimensional, $C^m$ compact manifold with boundary $\M\supset E$. We may mostly repeat the argument in the proof of Lemma \ref{lemma:forward direction}; we need only show there exists a finite set $S_0\subset O(n)$, chosen independently of $E$ and $\M$, satisfying the following property. For all $x\in \M$, there exists $Q\in S_0$ such that $T_x\M$ is the graph of a function in $Q$ coordinates.

Choose $S_0$ to be the set of $n\times n$ permutation matrices, that is, those matrices which have a single 1 per row and a single 1 per column with the remainder of the entries being 0. We claim $S_0$ satisfies the desired properties.

At the very least, $\M$ is locally the graph of a $C^m$ function in $Q_0$-coordinates, where the first $d$ columns of $Q_0$ span a $d$-plane parallel to $T_x\M$. By Lemma \ref{lemma:classify compatbility} we want to find $Q\in S_0$ such that the top $d\times d$ minor of $Q^TA$ is invertible, where
\begin{equation*}
    A=Q_0\begin{bmatrix}
    I_d\\ \nabla F(x_Q)
\end{bmatrix}.
\end{equation*}

The matrix $A$ is of rank $d$; therefore, there exist a subset $\{a_{i_1},\ldots,a_{i_d}\}$ of its rows ($i_1,\ldots,i_d$ all distinct) such that the $d\times d$ matrix with rows $a_{i_1},\ldots,a_{i_d}$ is invertible. The result follows by choosing $Q$ to have a 1 in the $i_1$-position of its first column, the $i_2$-position of its second column, up to putting a 1 in the $i_d$ position of the $d$-th column and filling out the rest of $Q$'s columns in any way such that it becomes a permutation matrix.

\end{proof}

In the above proof, we chose a particular choice of $S_0$ for which the conclusion of Theorem \ref{thm:finite number Q's} was particularly easy to verify. There exist smaller sets $S_0$ with the same property; for instance, one could remove permutation matrices from the $S_0$ used above which share the same first $d$ columns with another remaining element. However, we provide no further attempt to determine the smallest such $S_0$.

\section{Constructing Local Interpolants}\label{sec:local interpolants}

We now begin the proof of Theorem \ref{thm:main}. Suppose
\begin{enumerate}
    \item The M-bundle $\H=(H(x))_{x\in E}$ is nontrivial, Glaeser-stable, proper, and consistent;
    \item The Gr-bundle $(\hat{H}(x))_{x\in E}$ has a continuous section $f:E\to \Gr(n,d)$;
    \item For some fixed $x_0\in E$, $f(x_0)=W_0\in \hat{H}(x_0)$. Moreover, $Q_0\oplus \nabla P_0(x_0)=\Pi_0$, where $(Q_0,P_0)\in H(x_0)$.
\end{enumerate}

We define choices $\delta_x\in (0,\infty), R_x\in O(n)$, and $F_x\in C^m(\R^d,\R^{n-d})$ for all $x\in E$ as follows.

Since $f$ is continuous on the compact set $E$, it is uniformly continuous. Choose
\begin{equation}\label{eq:<1}
    0<\delta_f<1
\end{equation}
such that if $y,y'\in E$ and $|y-y'|<2\delta_f$, then $\dist(f(y),f(y'))<c_0/2$, where $c_0>0$ is a small constant to be chosen later. The constant $c_0$ will depend on $d,m,$, and $n$, but never $E$ nor any quantities derived directly from it.

Since $\H$ is proper, Glaeser-stable, and non-empty, by Lemma \ref{lemma:construct local interpolant}, there exists $0<\delta<\delta_f/6$ and $F\in C^m(\R^d,\R^{n-d})$ such that $G_{Q_0}(F)\supset E\cap B(x_0,4\delta)$. Furthermore, $J_{(x_0)_{Q_0}}F=P_0$ and
\begin{equation}\label{eq:A}
    J_{x_{Q_0}}F\in H^{Q_0}(x) \text{ for all } x\in E\cap B(x_0,4\delta).
\end{equation}

Let $R\in O(n)$ such that $W_0= \Pi R$, where $\Pi$ is as in \eqref{eq:define Pi}. Then, $\Pi_0$ and $R$ are compatible near $x_0$ since $W_0=G_{R}(h)$, where $h:\R^d\to\R^{n-d}$ is the zero function. Thus, taking $\delta$ smaller if needed, there exists $F'\in C^m(\R^d,\R^{n-d})$ such that 
\begin{equation}\label{eq:new_cite1}
    G_{R}(F')\cap B(x_0,24\delta)=G_{Q_0}(F)\cap B(x_0,24\delta).
\end{equation}

Since $G_R(F')$ and $G_{Q_0}(F)$ have the same tangent space at $x_0$, 
\begin{equation}\label{eq:87}
    \nabla F'((x_0)_R)=0.
\end{equation}

Again taking $\delta$ smaller if necessary, we may, by the uniform continuity of the tangent space of $G_{R}(F')$ on closed balls, assume that 
\begin{equation}\label{eq:tangent spaces close near x_0}
    \dist(R\oplus \nabla F'(x_R),R\oplus \nabla F'(y_{R}))<c_0/2
\end{equation}
for all $x,y\in B(x_0,\delta)$. In particular,
\begin{equation}\label{eq:tangent spaces close at x_0}
    \dist(R\oplus \nabla F'(x_{R}),W_0)<c_0/2
\end{equation}
for all $x\in B(x_0,\delta)$.

Let $\delta_{x_0}=\delta, F_{x_0}=F'$, and $R_{x_0}=R$, as above.

If $x\in B(x_0,2\delta_{x_0})\setminus\{x_0\}$, choose $Q\in O(n)$ such that $\Pi Q=R_{x_0}\oplus \nabla F_0'(x_{R_{x_0}})$ and $F$ such that 
\begin{equation}\label{eq:new_cite2}
    G_{R_{x_0}}(F_0')\cap B(x,24\delta)=G_Q(F)\cap B(x_0,24\delta)
\end{equation}
for some $0<\delta<\delta_{x_0}/12$. In particular,
\begin{equation}\label{eq:gradient_zero}
    \nabla F(x_Q)=0.
\end{equation}
By \eqref{eq:A} and the consistency of $(H(x))_{x\in E}$, $J_{y_Q}F\in H^Q(y)$ for all $y\in B(x,24\delta)$.

For such $x$, define $\delta_x=\delta, R_x=Q$, and $F_x=F$.

If $x\notin B(x_0,2\delta_0)$, choose $(Q,P)\in H(x)$ such that $Q\oplus \nabla P(x_Q)=f(x)$. By Lemma \ref{lemma:construct local interpolant}, there exists
\begin{equation}\label{eq:B_Iguess}
    0<\delta<\delta_{x_0}/20
\end{equation}
and $F\in C^m(\R^d,\R^{n-d})$ such that $G_{Q}(F)\supset E\cap B(x,\delta)$. Observe that by choice of $\delta$, $B(x,24\delta)$ and $B(x_0,\delta_{x_0})$ are disjoint.

As before, let $R\in O(n)$ such that 
\begin{equation}\label{eq:C}
    f(x)=\Pi R.
\end{equation}
Then, $f(x)$ and $R$ are compatible near $x$, so taking $\delta$ smaller if needed, there exists $F'\in C^m(\R^d,\R^{n-d})$ such that 
\begin{equation}\label{eq:another_one!}
    G_{R}(F')\cap B(x,24\delta)=G_{Q}(F)\cap B(x,24\delta).
\end{equation}
Observe that, as before,  \eqref{eq:C} implies
\begin{equation}\label{eq:last_one}
    \nabla F'((x_0)_R)=0.
\end{equation}

For such $x$, define $\delta_x=\delta, R_x=R$, and $F_x=F'$.

By making $\delta_x$ smaller if necessary, we have, for all $x\in E$,

\begin{enumerate}
    \item By \eqref{eq:new_cite1}, \eqref{eq:new_cite2}, and \eqref{eq:another_one!},
    \begin{equation}\label{eq:property 1.1}
        G_{R_x}(F_x)\cap B(x,24\delta_x)\supset E\cap B(x,24\delta_x)
    \end{equation} 
    \item The bound
    \begin{equation}\label{eq:property 1.2}
        |\nabla F_x(y_{R_x})|\le c_0
    \end{equation}
    holds for all $y_{R_x}\in B(x_{R_x},24\delta_x)\subset\R^d$, hence for all $y\in B(x,24\delta_x)\subset\R^n$. This is allowed because $\nabla F_x$ is continuous and $\nabla F_x(x_{R_x})=0$. (See \eqref{eq:87}, \eqref{eq:gradient_zero}, and \eqref{eq:last_one}.)
    \item If $x,x'\in E$ and $|x-x'|<\delta_f$, then 
    \begin{equation}\label{eq:property 1.3}
        \dist(\Pi R_x,\Pi R_{x'})<c_0
    \end{equation}
    \item If $x\in B(x_0,\delta_{x_0})$, then by \eqref{eq:new_cite2},
    \begin{equation}\label{eq:property 1.4}
        G_{R_x}(F_x)\cap B(x,24\delta_x)=G_{R_{x_0}}(F_{x_0})\cap B(x,24\delta_x).
    \end{equation}
    \item If $x\notin B(x_0,\delta_{x_0})$, then by \eqref{eq:B_Iguess},
    \begin{equation}\label{eq:property 1.5}
        B(x,\delta_x)\cap B(x_0,\delta_x)=\emptyset.
    \end{equation}
\end{enumerate}

The third property above requires some justification. If $x=x_0$ and $x'\notin B(x_0,\delta_{x_0})$ or $x,x'\notin B(x_0,\delta_{x_0})$, then $\Pi R_x=f(x)$ and $\Pi R_{x'}=f(x')$, so \eqref{eq:property 1.3} follows from the definition of $\delta_f$. If $x=x_0$ and $x'\in B(x_0,\delta_{x_0})$ or $x,x'\in B(x_0,\delta_{x_0})$, then it follows from \eqref{eq:tangent spaces close at x_0} and \eqref{eq:tangent spaces close near x_0}, respectively.

The last case is if $x\in B(x_0,\delta_{x_0})$ and $x'\notin B(x_0,\delta_{x_0})$. Here,
\begin{equation*}
    |x'-x_0|\le |x'-x|+|x-x_0|\le \delta_f+\delta_f=2\delta_f;
\end{equation*}
thus, by the definition of $\delta_f$,
\begin{equation*}
    \dist(\Pi R_{x'},W_0)< c_0/2.
\end{equation*}

By \eqref{eq:tangent spaces close at x_0},
\begin{equation*}
    \dist(\Pi R_{x},W_0)< c_0/2.
\end{equation*}

Thus, by the triangle inequality,
\begin{equation*}
    \dist(\Pi R_{x'},\Pi R_x)<c_0,
\end{equation*}
establishing \eqref{eq:property 1.3}.

We add one more condition to the $F_i$. Let
\begin{equation}\label{eq:define E_i}
    E_i:=\{(z)_{Q_i}:z\in E\cap B(z_i,3\delta_0)\}\subset \R^d.
\end{equation}

We require that $F_i$ be $C^\infty$ at $x$ whenever $d(x,E_i)>\delta_0$. This is attainable by a standard argument using a smooth partition of unity to paste together the original $F_i$ with a $C^\infty$ approximating function.

Since $E$ is compact, there exist $x_1,\ldots,x_M\in E$ such that
\begin{equation}\label{eq:x_i subcover}
    E\subset \bigcup_{1\le i\le M} B(x_i,\delta_{x_i}).
\end{equation}

Let $\delta_0=\min_{1\le i\le M}\delta_{x_i}>0$. Observe that  by \eqref{eq:<1},
\begin{equation}\label{eq:B}
    \delta_0<\delta_{x_0}/12<\delta_f/72<1;
\end{equation} else $E\subset B(x_0,\delta_{x_0})$ and we are done. Since $\{B(x,\delta_0):x\in E\}$ is an open cover of $E$, we again apply compactness to determine $y_1,\ldots,y_{M'}\in E$ such that 
\begin{equation*}
    E\subset \bigcup_{1\le j\le M'} B(y_j,\delta_0/3).
\end{equation*}

Next, we use the classical Vitali covering lemma (found in \cite{stein1993harmonic}, for example), stated below:
\begin{lemma}
    Let $B(x_1,r_1),\ldots,B(x_J,r_J) $ be balls in a metric space. Then there exist indices $j_1,\ldots j_m$ such that
    \begin{equation*}
        B(x_1,r_1)\cup \ldots \cup B(x_J,r_J)\subset B(x_{j_1},3r_{j_1})\cup \ldots \cup B(x_{j_m},3r_{j_m})
    \end{equation*}
    and $B(x_{j_1},r_{j_1}), \ldots, B(x_{j_m},r_{j_m})$ are disjoint.
\end{lemma}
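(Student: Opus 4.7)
The plan is to use the standard greedy selection argument. First, I would reorder the balls so that the radii are non-increasing: $r_1 \ge r_2 \ge \ldots \ge r_J$. Since the collection is finite, this is just a permutation of indices.

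Next, I would select the subcollection greedily. Set $j_1 = 1$, so the first chosen ball is the one with the largest radius. Inductively, having selected $j_1 < j_2 < \ldots < j_k$, let $j_{k+1}$ be the smallest index $i > j_k$ such that $B(x_i, r_i)$ is disjoint from $B(x_{j_l}, r_{j_l})$ for every $l \le k$; if no such index exists, stop. The process terminates in finitely many steps since $J$ is finite. By construction, the selected balls $B(x_{j_1}, r_{j_1}), \ldots, B(x_{j_m}, r_{j_m})$ are pairwise disjoint.

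It remains to verify the covering property. Fix any index $i \in \{1, \ldots, J\}$. If $i = j_k$ for some $k$, then $B(x_i, r_i)$ is trivially contained in $B(x_{j_k}, 3r_{j_k})$. Otherwise, $i$ was discarded at some stage, which means there exists some $j_k < i$ (hence $r_{j_k} \ge r_i$ by the initial sorting) with $B(x_i, r_i) \cap B(x_{j_k}, r_{j_k}) \neq \emptyset$. Picking a point $z$ in this intersection and applying the triangle inequality, for any $y \in B(x_i, r_i)$ one has
\begin{equation*}
    d(y, x_{j_k}) \le d(y, x_i) + d(x_i, z) + d(z, x_{j_k}) < r_i + r_i + r_{j_k} \le 3 r_{j_k},
\end{equation*}
so $B(x_i, r_i) \subset B(x_{j_k}, 3r_{j_k})$. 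This gives the desired covering.

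There is no real obstacle here; the only thing to watch is making sure the $3r$ constant is produced correctly by the triangle inequality, which follows because a discarded ball's center sits within distance $r_i + r_{j_k} \le 2 r_{j_k}$ of the chosen ball's center, leaving only one more $r_{j_k}$ of room for the ball's own radius.
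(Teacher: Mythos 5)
Your proof is correct. Note that the paper does not prove this lemma at all: it is quoted as the classical (finite) Vitali covering lemma with a citation to Stein, so there is no in-paper argument to compare against. Your greedy scheme---order the balls by non-increasing radius, select a maximal disjoint subfamily by scanning the indices in order, and use the triangle inequality to absorb a discarded ball into three times a selected one---is exactly the standard textbook proof of the cited result, and the constant $3$ comes out correctly since a discarded ball satisfies $r_i \le r_{j_k}$ and its center lies within $r_i + r_{j_k} \le 2r_{j_k}$ of $x_{j_k}$. The only step worth one explicit sentence is why a discarded index $i$ must meet some selected ball of smaller index: if $B(x_i,r_i)$ were disjoint from every selected ball with index below $i$, then when the algorithm scanned past the largest such selected index it would have selected $i$ (or some earlier index, contradicting maximality), so it could not have been discarded. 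With that remark the argument is complete.
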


Applying the Vitali covering lemma to $\{B(y_j,\delta_0/3): 1\le j\le M'\}$, there exist $z_1,\ldots,z_N\in E$ such that
\begin{equation*}
    E\subset \bigcup_{1\le j\le N} B(z_j,\delta_0)
\end{equation*}
where the $B(z_j,\delta_0/3)$ are disjoint. Here, $\{z_1,\ldots,z_N\}\subset \{y_1,\ldots,y_{M'}\}$ and the $z_j$ are distinct.

From this, we may deduce the existence of a constant $C=C(n)$ such that for all $1\le j\le N$,
\begin{equation*}
    |\{k:B(z_j,12\delta_0)\cap B(z_k,12\delta_0)\neq\emptyset\}|\le C.
\end{equation*}

We summarize the results of this section, replacing $F_j(\cdot)$ by $F_j(\cdot-(z_j)_{Q_j})-(z_j)_{Q_j}^\perp$.

There exist $0<\delta_0<1$, $z_1,\ldots,z_N\in E$, $Q_1,\ldots,Q_N\in O(n)$, and $F_1,\ldots,F_N\in C^m(\R^n,\R^{n-d})$ such that for all $1\le i,j\le N$,
\begin{enumerate}
    \item \begin{equation}\label{eq:property 2.0}
        E\subset\bigcup_{1\le i\le N} B(z_i,\delta_0)
    \end{equation}
    \item \begin{equation}\label{eq:property 2.1}
        F_i(0)=0;
    \end{equation}
    \item \begin{equation}\label{eq:property 2.2}
        z_i+G_{Q_i}(F_i)\supset E\cap B(z_i,24\delta_0);
    \end{equation}
    \item \begin{equation}\label{eq:property 2.3}
        |\nabla F_i(w)|\le c_0
    \end{equation}
    for $|w|\le 12\delta_0$;
    \item \begin{equation}\label{eq:close projections close}
        \dist(\Pi Q_i,\Pi Q_j)<c_0
    \end{equation}
    whenever $B(z_i,12\delta_0)\cap B(z_j,12\delta_0)\neq\emptyset$ by \eqref{eq:property 1.3} and \eqref{eq:B}.
    \item 
    \begin{equation}\label{eq:property 2.5}
    |\{k:B(z_i,12\delta_0)\cap B(z_k,12\delta_0)\neq\emptyset\}|\le C.
\end{equation}
    \item As follows from \eqref{eq:property 1.4}, if $z_i\in B(x_0,6\delta_0)$, then
    \begin{equation}\label{eq:property 2.6}
        [z_i+G_{Q_i}(F_i)]\cap B(z_i,\delta_0)=G_{R_{x_0}}(F_{x_0})\cap B(z_i,\delta_0).
    \end{equation}
    where $x_0, R_{x_0}, F_{x_0}$ are as at the beginning of the section.
    \item
    \begin{equation}\label{eq:property 2.7}
        F_i \text{ is } C^\infty \text{ at distance greater than } \delta_0 \text{ from } E_i,
    \end{equation}
    where $E_i$ is as in \eqref{eq:define E_i}.
\end{enumerate}

\section{Pasting Local Interpolants Together}\label{sec:pasting}

In this section, we write elements $z\in\R^n$ as $(x,y)\in\R^d\times \R^{n-d}$. Let $\Pi_d:\R^n\to\R^d$ send $z\in\R^n$ to its first $d$ coordinates, or equivalently, $x$. We write $B_d(p,r), B_{n-d}(p,r)$, and $B_n(p,r)$ to write the open ball of radius $r$ and center $p$ in $\R^d,\R^{n-d}$, and $\R^n$, respectively. Similarly, $B_d$, $B_{n-d}$, and $B_n$ will denote the unit balls in dimensions $d, n-d$, and $n$, respectively.

For $1\le i\le N$, let $o_i:\R^n\to\R^n$ be the rigid motion defined by
\begin{equation*}
    o_i(z)=Q_iz+z_i,
\end{equation*}
where $Q_i$ and $z_i$ are as in the end of Section \ref{sec:local interpolants}.

Let $\cyl=\delta_0(B_d\times B_{n-d})$, and for $r>0$ let $\cyl^r=r\delta_0(B_d\times B_{n-d})$. Define $\phi:\R^n\to\R$ by letting
\begin{equation*}
    \phi(z)=|y|^2
\end{equation*}
for $z=(x,y)\in \cyl^6$, and $\phi(z)=0$ otherwise. Let $\cyl_i=o_i(\cyl)$ and likewise $\cyl^r_i=o_i(\cyl^r)$ for $r>0$.

Observe that
\begin{equation}\label{eq:balls in cylinders}
B_n(z_i,r\delta_0)\subset\cyl_i^r\subset B_n(z_i,2r\delta_0)
\end{equation}
for all $r>0$.

In the language of this section, \eqref{eq:property 2.1}, \eqref{eq:property 2.2}, and \eqref{eq:balls in cylinders} imply 
\begin{equation}\label{eq:agreement_in_cylinders}
    z_i+G_{Q_i}(F_i)\supset E\cap \cyl_i^{12}.
\end{equation}

\begin{lemma}\label{lemma:it's a packet!}
    For $1\le i\le N$ and let $S_i=\{\cyl^6_{i_1},\ldots,\cyl^6_{i_{|S_i|}}\}$ be the set of cylinders $\cyl_i^6$ which intersect $\cyl^6_i$.

    Then, there exist rigid motions $U_{i_1},\ldots,U_{i_{|S_i|}}$ satisfying $U_{i_j}(z_{i_j})=z_{i_j}$ and translations $T_{i_1},\ldots,T_{i_{|S_i|}}$ such that for $1\le j\le |S_i|$
    \begin{enumerate}
        \item
        \begin{equation}\label{eq:6.1.1}
            T_{i_j}U_{i_j}\cyl^6_{i_j}\text{ is the translation of }\cyl^6_i \text{ by a vector contained in } \R^d;
        \end{equation}
        \item \begin{equation}\label{eq:6.1.2}
            |(I_n-U_{i_j})v|<C(n,d)c_0|v-z_{i_j}| \text{ for all } v\in\R^n,
        \end{equation}
        where $C(n,d)$ is as in Lemma \ref{lemma:Grass bases}; and
        \item \begin{equation}\label{eq:6.1.3}
            |T_{i_j}(0)|<24c_0\delta_0.
        \end{equation}
    \end{enumerate}
\end{lemma}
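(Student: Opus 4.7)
The plan is to use Lemma \ref{lemma:Grass bases} to produce an ambient rotation aligning the two $d$-planes $\Pi Q_{i_j}$ and $\Pi Q_i$, and then to compensate for the positional offset $z_{i_j}-z_i$ with a translation. The geometric input is \eqref{eq:close projections close}: whenever $\cyl^6_i\cap\cyl^6_{i_j}\neq\emptyset$ we have $\dist(\Pi Q_i,\Pi Q_{i_j})<c_0$. Assuming $c_0$ was chosen below the $\eps_0$ of Lemma \ref{lemma:Grass bases}, we obtain $A_{i_j}\in O(n)$ with $A_{i_j}(\Pi Q_{i_j})=\Pi Q_i$ and $\|A_{i_j}-I_n\|\le C(n,d)c_0$.

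I would then define the rigid motion $U_{i_j}(v):=A_{i_j}(v-z_{i_j})+z_{i_j}$. By construction $U_{i_j}(z_{i_j})=z_{i_j}$, and the identity $(I_n-U_{i_j})v=(I_n-A_{i_j})(v-z_{i_j})$ immediately gives \eqref{eq:6.1.2}. For the orientation claim in \eqref{eq:6.1.1}, write $U_{i_j}(\cyl^6_{i_j})=A_{i_j}Q_{i_j}\cyl^6+z_{i_j}$ and observe that the orthogonal transformation $Q_i^{-1}A_{i_j}Q_{i_j}$ preserves the splitting $\R^n=\R^d\oplus\R^{n-d}$, because $A_{i_j}Q_{i_j}$ carries $\R^d$ to $\Pi Q_i$ and so does $Q_i$. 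Hence $Q_i^{-1}A_{i_j}Q_{i_j}$ lies in the block-diagonal subgroup $O(d)\times O(n-d)$, and since $\cyl^6=6\delta_0(B_d\times B_{n-d})$ is invariant under this subgroup, $A_{i_j}Q_{i_j}\cyl^6=Q_i\cyl^6$. Therefore $U_{i_j}(\cyl^6_{i_j})=\cyl^6_i+(z_{i_j}-z_i)$.

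Next, $T_{i_j}$ should kill the component of $z_{i_j}-z_i$ transverse to $\Pi Q_i$. Writing $z_{i_j}-z_i=\tau_\parallel+\tau_\perp$ with $\tau_\parallel\in\Pi Q_i$ and $\tau_\perp\in(\Pi Q_i)^\perp$, set $T_{i_j}(v):=v-\tau_\perp$. Then $T_{i_j}U_{i_j}(\cyl^6_{i_j})=\cyl^6_i+\tau_\parallel$, which is precisely \eqref{eq:6.1.1} since $\tau_\parallel$ is a ``vector in $\R^d$'' with respect to the $Q_i$-coordinate system (i.e.\ in $Q_i(\R^d\times\{0\})=\Pi Q_i$). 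It remains to bound $|T_{i_j}(0)|=|\tau_\perp|$. The cylinder intersection combined with \eqref{eq:balls in cylinders} forces $|z_{i_j}-z_i|<24\delta_0$, and then \eqref{eq:property 2.2} produces $v\in\R^d$ with $z_{i_j}-z_i=Q_i(v,F_i(v))$. Projecting onto $(\Pi Q_i)^\perp$ identifies $\tau_\perp=Q_i(0,F_i(v))$, and the Mean Value Theorem together with $F_i(0)=0$ from \eqref{eq:property 2.1} and $|\nabla F_i|\le c_0$ from \eqref{eq:property 2.3} yields $|\tau_\perp|=|F_i(v)|\le c_0|v|\le 24c_0\delta_0$, giving \eqref{eq:6.1.3}.

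The main obstacle is purely a bookkeeping matter in this last step: one must confirm that the argument $v$ lies in the ball on which the stated gradient bound applies. Although \eqref{eq:property 2.3} is written for $|w|\le 12\delta_0$, the upstream version \eqref{eq:property 1.2} delivers the bound on the larger ball $B(x_{R_x},24\delta_x)\supset B\bigl((z_i)_{R_{z_i}},24\delta_0\bigr)$, which comfortably covers the $|v|<24\delta_0$ regime produced by the cylinder intersection; with this clarification the constants in the conclusion are honest. The case $i_j=i$ requires no separate treatment, as one may take $A_{i_j}=I_n$ and obtain $\tau=0$ trivially.
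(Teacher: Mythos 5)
Your proposal is correct and follows essentially the same route as the paper's proof: apply Lemma \ref{lemma:Grass bases} to the nearby planes guaranteed by \eqref{eq:close projections close}, rotate about $z_{i_j}$ to get $U_{i_j}$ and \eqref{eq:6.1.2}, use the block-diagonal invariance of the cylinder for \eqref{eq:6.1.1}, and bound the normal offset via \eqref{eq:property 2.2}, $F_i(0)=0$, and the gradient bound to get \eqref{eq:6.1.3}. Your explicit justification of the $O(d)\times O(n-d)$ invariance and of the radius on which the gradient bound applies (via \eqref{eq:property 1.2} rather than the $12\delta_0$ ball in \eqref{eq:property 2.3}) is, if anything, slightly more careful than the paper's write-up.
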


A collection of cylinders satisfying a conclusion similar to that of the above lemma, as well as a couple of additional properties, would be considered a cylinder packet in \cite{fefferman2016testing}. We omit these additional properties since they are not necessary for our purposes.

\begin{proof}
Without loss of generality, let $Q_i=I_n$ and $z_i=0$.

Let $\Pi_{i_j}$ denote the orthogonal projection onto $Q_{i_j}\R^d$.

    Fix $1\le i\le N$ and $1\le i_j\le |S_i|$. Since $\cyl^6_{i_j}$ intersects $\cyl^6_i$, by \eqref{eq:balls in cylinders}, $B_n(z_i,12\delta_0)$ intersects $B_n(z_{i_j},12\delta_0)$. Thus, by \eqref{eq:close projections close}, $\|\Pi_{i_j}-\Pi_d\|\le c_0$ By Lemma \ref{lemma:Grass bases} and taking $c_0$ sufficiently small if necessary, there exists $R_{i_j}\in O(n)$ such that $R_{i_j}Q_i\R^d=\R^d$ and $\|R_{i_j}-I_n\|\le C(n,d)c_0$. We obtain \eqref{eq:6.1.2} by letting
    \begin{equation*}
        U_{i_j}(z)=R_{i_j}(z-z_{i_j})+z_{i_j}.
    \end{equation*}
    Defining
    \begin{equation*}
        T_{i_j}(z)=z-(z_{i_j})_{I_n}^\perp=z-y_{i_j}
    \end{equation*}
    gives us \eqref{eq:6.1.1}, as
    \begin{align*}
        T_{i_j}U_{i_j}\cyl_{i_j}^6&=T_{i_j}U_{i_j}(Q_{i_j}\cyl_i^6+z_{i_j})\\
        &= T_{i_j}(\cyl_i^6+z_{i_j})\\
        &=\cyl_i^6+x_{i_j}.
    \end{align*}

    Lastly, observe that since $|z_{i_j}-z_i|<24\delta_0$, by \eqref{eq:property 2.2}, $z_{i_j}\in G_{I_n}(F_i)$. By \eqref{eq:property 2.3},
    \begin{equation*}
        |y_{i_j}|=|F_i(x_{i_j})|\le \|\nabla F_i\|\cdot |x_{i_j}|\le 24c_0\delta_0,
    \end{equation*}
   establishing \eqref{eq:6.1.3}.
\end{proof}

Given a $C^k$ manifold $\M$, one may define a function $D:\R^n\to\R$ by
\begin{equation*}
    D(x)=d(x,\M)^2,
\end{equation*}
where $d(x,A)=\inf_{y\in A}d(x,y)$ for any subset $A\subset\R^n$.

The following theorem is adapted from Theorem 13 and Observation 3 in \cite{fefferman2016testing}. It says, roughly, that given a $C^k$ function $F:\R^n\to\R$ satisfying similar properties as $D$ above, there exists a $C^{k-2}$ manifold $\mathcal{N}$ such that $F$ approximates the squared distance function from $\mathcal{N}$. We will also be interested in quantitative bounds on $\mathcal{N}$.

\begin{theorem}\label{thm:testing}
    Suppose $F:B_n\to\R$ is a $C^k$ function satisfying
    \begin{enumerate}
        \item \begin{equation*}
            |\partial^\alpha_{x,y}F(x,y)|\le C_0
        \end{equation*}
        for $(x,y)\in B_n$ and $|\alpha|\le k$.
        \item For $(x,y)\in\R^d\times \R^{n-d}$, we have
        \begin{equation*}
            c_1[|y|^2+\rho^2]\le F(x,y)+\rho^2\le C_1[|y|^2+\rho^2],
        \end{equation*}
        where $0<\rho<c$ and $c$ is a small enough constant depending only on $C_0,c_1,C_1,k,n$.
    \end{enumerate}
    Then, there exist constants $c_2$ and $C$, depending only on $C_0,c_1,C_1,k,n$ such that the following hold:
    \begin{enumerate}
        \item For $z\in B_n(0,c_2)$, let $N(z)$ be the subspace spanned by the eigenvectors of the Hessian $\nabla^2 F(z)$ corresponding to the largest $n-d$ eigenvalues and let $\Pi_{hi}(z):\R^n\to N(z)$ be the orthogonal projection from $\R^n$ onto $N(z)$. Then,
        \begin{equation*}
            |\partial^\alpha \Pi_{hi}(z)|\le C
        \end{equation*}
        for $z\in B_n(0,c_2)$ and $|\alpha|\le k-2$. In particular, $N(z)$ is a $C^{k-2}$ function of $z$.
        \item There exists a map $\Psi:B_d(0,c_2)\to B_{n-d}(0,c_2)$ such that
        \begin{equation*}
            \{z\in B_d(0,c_2)\times B_{n-d}(0,c_2)|\Pi_{hi}(z)\nabla^2 F(z)=0\}=\{(x,\Psi(x))|x\in B_d(0,c_2)\}.
        \end{equation*}
        Furthermore, $\Psi$ satisfies the quantitative bounds
         \begin{equation}\label{eq:control on Psi graph}
            |\Psi(0)|\le C\rho; |\partial^\alpha\Psi|\le C^{|\alpha|}
        \end{equation}
        on $B_d(0,c_2)$ and for $1\le |\alpha|\le k-2$.

        In particular, $\{z\in B_d(0,c_2)\times B_{n-d}(0,c_2)|\Pi_{hi}(z)\nabla^2 F(z)=0\}$ is a $C^{k-2}$ graph.
        \item Any $z\in B_n(0,c_2)$ may be expressed uniquely in the form $(x,\Psi(x))+v$ where $x\in B_d(0,c_2)$ and $v\in \Pi_{hi}(x,\Psi(x))\R^n\cap B_n(0,c_2)$. Furthermore, $x$ and $v$ are both $C^{k-2}$ functions of $z\in B_n(0,c_2)$ with derivatives up to order $k-2$ bounded in absolute value by a constant $C$.
    \end{enumerate}
\end{theorem}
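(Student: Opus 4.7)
The plan is to exploit the two-sided comparison $c_1(|y|^2+\rho^2)\le F(x,y)+\rho^2\le C_1(|y|^2+\rho^2)$, which forces $F$ to behave, to leading order, like a squared distance function to a graph close to $\R^d\times\{0\}$; the Hessian $\nabla^2 F$ will inherit a spectral gap, the top eigenspace will approximate the normal bundle, and the sought manifold will be recovered as the locus where the normal component of $\nabla F$ vanishes. (I read the symbol $\nabla^2$ in the defining equation of item (2) as $\nabla$, since setting an $(n-d)\times n$ matrix identically to zero would overdetermine the problem, whereas $\Pi_{hi}(z)\nabla F(z)=0$ gives exactly $n-d$ independent equations and, for a true squared distance function, cuts out the manifold.)

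First I would show that $\nabla^2 F(0)$ has the block structure
\begin{equation*}
\nabla^2 F(0)=\begin{pmatrix} O(\rho) & O(\rho) \\ O(\rho) & 2H_{yy}+O(\rho)\end{pmatrix},
\end{equation*}
with $H_{yy}$ positive definite and with spectrum in $[2c_1,2C_1]$. Taylor-expand $F$ at the origin and use the uniform bound $|\partial^\alpha F|\le C_0$ to control the remainder at an intermediate scale larger than $\rho$ but much smaller than $1$; the comparison along the slice $\{y=0\}$ then forces $F(0)$, $\partial_x F(0)$, and the $x$--$x$ block of the Hessian to all be of size $O(\rho)$, while the comparison along $\{x=0\}$ pins down $H_{yy}$. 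By $C^{k-2}$ continuity of $\nabla^2 F$, this block structure persists on some small ball $B_n(0,c_2)$, so $\nabla^2 F(z)$ has $n-d$ eigenvalues clustered near $[2c_1,2C_1]$ and $d$ eigenvalues of size $O(c_2+\rho)$, with a uniform spectral gap. This already yields (1): $\Pi_{hi}(z)$ is defined by the Riesz projection
\begin{equation*}
    \Pi_{hi}(z)=\frac{1}{2\pi i}\oint_{\gamma}(\lambda I-\nabla^2 F(z))^{-1}\,d\lambda,
\end{equation*}
with $\gamma$ a contour enclosing only the top $n-d$ eigenvalues, and the required $C^{k-2}$ bounds on $\Pi_{hi}$ follow by differentiating under the integral.

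For (2), I would apply the implicit function theorem to $G(x,y):=\Pi_{hi}(x,y)\nabla F(x,y)$, viewed as a map whose $n-d$ independent components we set to zero. The comparison bounds (again via Taylor's theorem at an intermediate scale) give $|\nabla F(0)|=O(\rho)$, hence $|G(0)|=O(\rho)$, while $\partial_y G(0)$ is close to $2I_{n-d}$ by the first step and therefore invertible with quantitative bounds. Standard IFT then produces $\Psi:B_d(0,c_2)\to B_{n-d}(0,c_2)$ whose graph is exactly $\{G=0\}$, with $|\Psi(0)|\le C\rho$ (from $|G(0)|=O(\rho)$) and $|\partial^\alpha\Psi|\le C^{|\alpha|}$ for $1\le|\alpha|\le k-2$ (from iterating IFT together with the derivative bounds on $\Pi_{hi}$ and $\nabla F$). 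For (3), I would apply IFT once more to the map $\Phi:(x,v)\mapsto (x,\Psi(x))+v$ with $v\in N(x,\Psi(x))$ parameterized by a smoothly chosen basis obtained from $\Pi_{hi}$; since $N(0,\Psi(0))$ is close to $\R^{n-d}$ and $(x,\Psi(x))$ is a graph over $\R^d$, the Jacobian of $\Phi$ at $(0,0)$ is close to $I_n$, so $\Phi$ is a $C^{k-2}$ diffeomorphism onto a neighborhood of $0$ in $\R^n$ with the claimed derivative estimates.

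The hard part is the spectral-gap step, because the hypotheses only control the \emph{values} of $F$, not any of its derivatives. Producing the Hessian estimate demands a careful Taylor-expansion argument at an intermediate scale chosen so that the comparison is informative (scale larger than $\rho$) yet the cubic remainder supplied by the $C^k$ bound is negligible (scale much smaller than $1$); getting the constants to line up so that the spectral gap is opened uniformly on $B_n(0,c_2)$ is the only genuinely delicate point. Everything downstream—the Riesz projection and the two IFT applications—is then routine.
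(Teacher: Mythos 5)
You should first note that the paper itself contains no proof of this statement: Theorem \ref{thm:testing} is imported as a black box, ``adapted from Theorem 13 and Observation 3'' of \cite{fefferman2016testing}, so the only comparison available is with that source. Your sketch is essentially a reconstruction of the argument used there: force a uniform spectral gap for $\nabla^2F$ (so that the top $n-d$ eigenvalues are comparable to $c_1,C_1$ and the remaining $d$ are small), get $C^{k-2}$ control of $\Pi_{hi}$ from the Riesz/contour-integral form of the spectral projection, and cut out the putative manifold by a quantitative implicit function theorem applied to $\Pi_{hi}(z)\nabla F(z)=0$, with tubular-neighborhood coordinates for part (3); your reading of the $\nabla^2$ in item (2) (and in the definition of $\M_{put}$) as $\nabla$ is the correct interpretation and matches \cite{fefferman2016testing}. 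Two small caveats, neither fatal: the claim that the $x$--$x$ and mixed blocks of $\nabla^2F(0)$ are $O(\rho)$ is stronger than what the value comparison plus the $C^k$ bound actually gives at the intermediate-scale Taylor step (one gets fractional powers such as $O(\rho^{1/2})$ or $O(\rho^{2/3})$ unless extra derivatives are available), but this is harmless since the argument only needs those blocks to be small relative to $c_1$, and the one genuinely $O(\rho)$ quantity you need for $|\Psi(0)|\le C\rho$ is $|\nabla F(0)|$, which does follow from nonnegativity of $F+\rho^2$ together with the bound on $\nabla^2F$; and for part (3) the near-horizontality of the graph of $\Psi$ and its transversality to $N(x,\Psi(x))$ must be verified uniformly over $B_d(0,c_2)$ (via $\nabla\Psi=-(\partial_yG)^{-1}\partial_xG$ and the gap), not only at the origin, and the whole argument tacitly uses $k\ge3$, which holds in the paper's application $k=m+2$. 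With those points tightened, your outline is a correct proof along the same lines as the cited source.
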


Let $\theta\in C^\infty(\R^d,[0,1])$ be a bump function strictly positive on $B_d(0,1)$, equal to 0 on the complement of $B_d(0,1)$, and satisfying the bounds
\begin{equation}\label{eq:theta def}
    |\da\theta(x)|\le C
\end{equation}
for $x\in\R^d,|\alpha|\le k$.

Define $G:\bigcup_i \cyl^5_i\to\R$ by
    \begin{equation}\label{eq:def F^o}
        G(z)=\frac{\sum_{\cyl_i\ni z}\phi(o_i^{-1}(z))\theta (\Pi_d(o_i^{-1}(z))/(6 \delta_0))}{\sum_{\cyl_i\ni z}\theta (\Pi_d(o_i^{-1}(z))/(6 \delta_0))}
    \end{equation}

Let
\begin{equation*}
    \mathcal{A}_i:=o_i(\delta_0 B_d)\subset \cyl_i;
\end{equation*}
\begin{equation*}
    \mathcal{A}^r_i:=o_i(r\delta_0 B_d)\subset \cyl^r_i
\end{equation*}
for $r>0$;
\begin{equation*}
    \mathcal{A}^r:=\bigcup_i \mathcal{A}^r_i;
\end{equation*}
and, given $R>0$,
\begin{equation*}
    \mathcal{A}^r_R:=\{z\in\R^n:\inf_{\bar{z}\in\mathcal{A}^r}|z-\bar{z}|<R\}.
\end{equation*}

The following is based on Lemma 16 in \cite{fefferman2016testing}.

\begin{lemma}\label{lemma:16 equivalent}
    Let $G$ be as in \eqref{eq:def F^o}. Then, for all $w\in \mathcal{A}^4$, the function $G_w:B_n\to\R$ by
    \begin{equation}\label{eq:def G_w}
        G_w(z)=\frac{G(w+\delta_0Q_i(z))}{\delta_0^2}
    \end{equation}
    satisfies
     \begin{enumerate}
        \item \begin{equation}\label{eq:57.5}
            |\partial^\alpha_{x,y}G_w(x,y)|\le C
        \end{equation}
        for $(x,y)\in B_n$, $|\alpha|\le k$, and a constant $C<\infty$.
        \item For $(x,y)\in B_n$, we have
        \begin{equation}\label{eq:used to have rho's}
            c_1[|y|^2+c_0^2]\le G_w(x,y)+c_0^2\le C_1[|y|^2+c_0^2],
        \end{equation}
        where $c_0$ is independent of any of $C_0,c_1,C_1,k,d,n$.
        \end{enumerate}
\end{lemma}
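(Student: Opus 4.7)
The plan is to work in local coordinates near $z_{i_0}$, where $i_0$ is any index with $w \in \mathcal{A}^4_{i_0}$. Writing $w = o_{i_0}(\bar x, 0)$ for some $\bar x \in \R^d$ with $|\bar x| < 4\delta_0$, the point $u := w + \delta_0 Q_{i_0} z$ for $z = (x, y) \in B_n$ satisfies $o_{i_0}^{-1}(u) = (\bar x + \delta_0 x, \delta_0 y) \in 5\delta_0 B_d \times \delta_0 B_{n-d}$, so $u \in \cyl^5_{i_0}$ and $G(u)$ is defined. Indices $j$ contributing to the defining sum of $G(u)$ satisfy $|u - z_j| \le \delta_0\sqrt n$, whence $|z_j - z_{i_0}| \le (5 + \sqrt n)\delta_0$; by the disjointness of $\{B(z_j, \delta_0/3)\}$ from the Vitali construction, there are at most $C(n)$ such indices, and in particular $j = i_0$ is among them.

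For each such $j$, set $R_j := Q_j^T Q_{i_0} \in O(n)$ and write it in block form $R_j = \bigl(\begin{smallmatrix} A_j & B_j \\ C_j & D_j \end{smallmatrix}\bigr)$ with $A_j$ of size $d\times d$. Since $B(z_{i_0}, 12\delta_0) \cap B(z_j, 12\delta_0) \ne \varnothing$, \eqref{eq:close projections close} yields $\dist(\Pi Q_{i_0}, \Pi Q_j) < c_0$. A direct projection computation (analogous to the proof of Lemma \ref{lemma:Grass bases}) then gives $\|B_j\|, \|C_j\| \le \sqrt d\, c_0$, and the orthogonality of $R_j$ forces $\|A_j^T A_j - I_d\|, \|D_j^T D_j - I_{n-d}\| \le d c_0^2$, so $A_j, D_j$ are near-orthogonal. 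By \eqref{eq:agreement_in_cylinders} and $|z_j - z_{i_0}| < 24\delta_0$, we can write $z_j = z_{i_0} + Q_{i_0}(v_j, F_{i_0}(v_j))$ with $|v_j| \le 24\delta_0$ and $|F_{i_0}(v_j)| \le 24 c_0 \delta_0$ (using \eqref{eq:property 2.3} together with $F_{i_0}(0)=0$). A direct computation then gives the last $n - d$ components of $o_j^{-1}(u)$ as
\begin{equation*}
   C_j(\bar x + \delta_0 x - v_j) + D_j(\delta_0 y - F_{i_0}(v_j)),
\end{equation*}
whose norm is bounded by $C \delta_0(c_0 + |y|)$. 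Consequently $\phi(o_j^{-1}(u))/\delta_0^2 \le C_1(|y|^2 + c_0^2)$ for every $j$, and since $G_w$ is a convex combination of these quantities, the upper bound in \eqref{eq:used to have rho's} follows.

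For the lower bound, the $j = i_0$ term contributes $\phi(o_{i_0}^{-1}(u))/\delta_0^2 = |y|^2$ exactly, with weight $\theta((\bar x + \delta_0 x)/(6\delta_0))$. Since $|\bar x + \delta_0 x|/(6\delta_0) \le 5/6 < 1$, this argument lies in the compact set $(5/6)\overline{B_d}$ on which $\theta$ attains a positive minimum $c_\theta$. Combined with the a priori upper bound $\sum_j \theta(\Pi_d(o_j^{-1}(u))/(6\delta_0)) \le C(n)\|\theta\|_\infty$ from the cardinality bound, we deduce $G_w(x, y) \ge c' |y|^2$ for some $c' > 0$, so the lower bound in \eqref{eq:used to have rho's} holds after choosing $c_1 \le \min(c', 1)$.

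The derivative bound follows from Leibniz and the quotient rule. On the relevant domain, $\phi \circ o_j^{-1}$ is a quadratic polynomial with $|\partial^\beta(\phi \circ o_j^{-1})| \le C\delta_0^{2-|\beta|}$ for $|\beta| \le 2$ and vanishing for $|\beta| \ge 3$, while $|\partial^\beta \theta(\Pi_d(o_j^{-1}(\cdot))/(6\delta_0))| \le C\delta_0^{-|\beta|}$, so Leibniz yields $|\partial_u^\alpha(\phi\cdot\theta)| \le C\delta_0^{2-|\alpha|}$ per term. Summing over the $C(n)$ relevant indices and dividing by a denominator bounded below by $c_\theta$ (itself with $|\partial^\alpha D| \le C \delta_0^{-|\alpha|}$), the quotient rule produces $|\partial_u^\alpha G(u)| \le C\delta_0^{2-|\alpha|}$; the chain-rule factor $\delta_0^{|\alpha|}$ from the rescaling $z \mapsto w + \delta_0 Q_{i_0} z$ cancels against the $1/\delta_0^2$ prefactor to give $|\partial_z^\alpha G_w| \le C$ uniformly for $|\alpha| \le k$. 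The main technical step is the block decomposition of $R_j$: one must carry several $O(c_0\delta_0)$ error terms (from the $C_j$-terms and from $F_{i_0}(v_j)$) without absorbing the leading $\delta_0|y|$ contribution of $D_j(\delta_0 y)$, which remains the dominant term in the pointwise bounds precisely because $D_j$ is near-orthogonal.
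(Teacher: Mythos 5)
Your proof is correct and reaches both estimates by the same overall strategy as the paper: write $G$ as a convex combination of per-cylinder quadratic terms, show each term equals $|y|^2$ up to $O(c_0)$ errors, obtain the lower bound from the fact that the central weight is bounded below (the argument of $\theta$ stays in the $5/6$-scale ball), and obtain the derivative bounds from Leibniz/quotient rule plus the bounded-overlap count \eqref{eq:property 2.5}. The one genuine difference is the geometric step: the paper compares $d(z,6\mathcal{A}_{i_j})$ with $d(z,6\mathcal{A}_i)$ by invoking the rigid motions of Lemma \ref{lemma:it's a packet!}, each of which moves points of $\cyl^6_{i_j}$ by at most $Cc_0\delta_0$, whereas you re-derive that comparison from scratch via the block decomposition of $R_j=Q_j^TQ_{i_0}$, using \eqref{eq:close projections close} to make the off-diagonal blocks $O(c_0)$ and \eqref{eq:property 2.2}--\eqref{eq:property 2.3} to place $z_j$ on the graph of $F_{i_0}$ with $|F_{i_0}(v_j)|\le 24c_0\delta_0$. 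This buys self-containedness (you never need the packet lemma here) at the cost of redoing, in coordinates, essentially the computation that lemma packages. Two small points to tidy: the diameter of a contributing cylinder is $O(\delta_0)$ with a dimension-free constant (e.g.\ $\cyl^6=6\delta_0(B_d\times B_{n-d})$ has diameter $12\sqrt{2}\,\delta_0$), so replace your $\delta_0\sqrt{n}$ bound --- as written, $(5+\sqrt{n})\delta_0<24\delta_0$ would fail for large $n$, and you need $|z_j-z_{i_0}|<24\delta_0$ both to apply \eqref{eq:property 2.2} and \eqref{eq:close projections close}; and your claim that the index $i_0$ itself contributes to the sum presumes that the sum in \eqref{eq:def F^o} runs over the cylinders $\cyl^6_i$ containing the point (the reading the paper's own proof uses), not the unit-scale ones.
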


\begin{proof}
By standard application of the chain rule, to prove the first claim it suffices to show
\begin{equation}\label{eq:derivative bounds on G}
    |\da G(z)|\le C \delta_0^{2-|\alpha|}
\end{equation}
for all $|\alpha|\le m+2$ and $z\in B(w,\delta_0)$, thus $z_0\in \cup_i \cyl_i^5$.

By definition,
\begin{equation*}
    |\da \phi(o_i^{-1}(z))|\le \delta_0^{2-|\alpha|}
\end{equation*}
whenever $\theta (\Pi_d(o_i^{-1}(z'))/(6 \delta_0))$ is not uniformly zero for $z'$ in a neighborhood of $z$ (that is, whenever $\phi(o_i^{-1}(z))$ is relevant for computations of $G$ and $\da G$).

Define, for $z\in \cup_i\cyl^6_i,$
\begin{equation*}
    \theta_i(z)=\frac{\theta (\Pi_d(o_i^{-1}(z))/(6 \delta_0))}{\sum_{\cyl_j\ni z}\theta (\Pi_d(o_j^{-1}(z))/(6 \delta_0))}
\end{equation*}
so that
\begin{equation}\label{eq:rewrite G}
    G(z)=\sum_{\cyl_i}\phi(o_i^{-1}(z))\theta_i(z).
\end{equation}

By repeated applications of the chain rule,
\begin{equation*}
    |\da \theta_i(z)|\le C\delta_0^{-|\alpha|};
\end{equation*}
thus, by \eqref{eq:property 2.5} and multiple applications of the product rule to \eqref{eq:rewrite G}, we obtain \eqref{eq:derivative bounds on G}.

To prove the second claim, let $(x,y)\in B_n$ and $z'=w+\delta_0Q_i(x,y)\in \cyl_i^6$ for some $1\le i \le N$; thus $G_w(x,y)=\delta_0^{-2}G(z')$. Observe that $|y|=d(z',6\mathcal{A}_i)/\delta_0$.

Observe that, for $z\in \cyl_i^6$, $G(z)$ is a convex combination of $d(z,6\mathcal{A}_{i})^2,d(z,6\mathcal{A}_{i_1})^2,\ldots,d(z,6\mathcal{A}_{i_{|S_i|}})^2$, where $S_i=\{\cyl^6_{i_1},\ldots,\cyl^5_{i_{|S_i|}}\}$ is the set of cylinders which intersect $\cyl^6_i$ and $i_1=i$.

It follows from Lemma \ref{lemma:it's a packet!} that for each $1\le j\le |S_i|$, there is a rigid motion of $\cyl^6_{i_j}$ such that the image of $5\mathcal{A}_{i_j}$ is contained in $o_i(\R^d)$; furthermore, by \eqref{eq:6.1.2} and \eqref{eq:6.1.3}, this rigid motion moves no point of $\cyl^6_{i_j}$ more than $Cc_0\delta_0$, where $C=C(n,d)$. Thus, $|d(z,6\mathcal{A}_{i_j})-d(z,6\mathcal{A}_i)|\le Cc_0\delta_0$ for all $1\le j\le |S_i|$.

We now see that, for some $\lambda_1,\ldots,\lambda_{|S_i|}\in [0,1]$ satisfying $\sum_j\lambda_j=1$,
\begin{align*}
    G(z')&= \sum_j \lambda_j d(z',6\mathcal{A}_{i_j})^2\\
    &\le \sum_j\lambda_j [d(z',6\mathcal{A}_i)+Cc_0\delta_0]^2\\
    &\le [\delta_0|y|+Cc_0\delta_0]^2.
\end{align*}
Thus,
\begin{align*}
    G_w(x,y)&\le [|y|+Cc_0]^2\le 4|y|^2+Cc_0^2\\
    G_w(x,y)+c_0^2&\le C(|y|^2+c_0^2),
\end{align*}
establishing the right-hand-side of \eqref{eq:used to have rho's}.

Since $z_0\in\cyl^5_i$ (not just $\cyl_i^6$) and $|S_i|\le C$ by \eqref{eq:property 2.5}, $\lambda_1=\theta_1(z')$ is bounded below by a constant and
\begin{equation*}
    G(z_0)\ge \frac{1}{C'}[2\delta_0|y|]^2,
\end{equation*}
hence
\begin{equation*}
    G_w(x,y)\ge \frac{1}{C''}|y|^2
\end{equation*}
and
\begin{equation*}
    G_w(x,y)+c_0^2\ge \frac{1}{C''}(|y|^2+c_0^2)
\end{equation*}
establishing the left-hand-side of \eqref{eq:used to have rho's}.
    
\end{proof}

The following lemma is based on Lemma 15 in \cite{fefferman2016testing}.
\begin{lemma}\label{lemma:reach bound}
Let
    \begin{equation*}
        \mathcal{M}_{put}:=\{z\in \mathcal{A}^4_{c_2\delta_0/2}|\Pi_{hi}(z)\nabla^2 G(z)=0\}.
    \end{equation*}
    Then, $\M_{put}$ is a $d$-dimensional, $C^{m+2}$ submanifold of $\R^n$ such that the following holds: If $z\in \cyl_i^4$ for some $i$ and $d(z,\M_{put})<c_2\delta_0/4$, then $z$ has a unique closest point in $\M_{put}$.

    Furthermore, if $w\in \mathcal{A}^4$, then $d(z,\M_{put})<c_2\delta_0/2$.
\end{lemma}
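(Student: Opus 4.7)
The plan is to apply Theorem \ref{thm:testing} locally at each base point $w\in \mathcal{A}^4$ via the rescaled functions $G_w$ of Lemma \ref{lemma:16 equivalent}, then stitch together the conclusions in the original coordinates. Since $G$ is a finite sum of products of the polynomial $\phi$ with smooth cutoffs $\theta_i$ precomposed with rigid motions, it is $C^\infty$, so Lemma \ref{lemma:16 equivalent} applies for any $k$; take $k=m+4$ throughout, which will yield a $C^{m+2}$ manifold from Theorem \ref{thm:testing}(2). The crucial observation is that the defining condition $\Pi_{hi}(z)\nabla^2 G(z)=0$ is preserved under the rescaling: under $z=w+\delta_0 Q_i\zeta$ one computes $\nabla^2 G_w(\zeta)=Q_i^T\nabla^2 G(z)Q_i$, so the top $(n-d)$-dimensional eigenspace of $\nabla^2 G_w$ is just $Q_i^T$ applied to that of $\nabla^2 G$.

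With this identification, Theorem \ref{thm:testing}(2) applied to $G_w$ (for $w\in\mathcal{A}^4_i$) represents $\mathcal{M}_{put}$ near $w$ as the image under $\zeta\mapsto w+\delta_0 Q_i\zeta$ of the $C^{m+2}$ graph $\{(x,\Psi_w(x)):x\in B_d(0,c_2)\}$. These local graph descriptions automatically agree on overlaps because they all describe the single set $\{\Pi_{hi}\nabla^2 G=0\}$, so $\mathcal{M}_{put}$ is a $d$-dimensional $C^{m+2}$ submanifold of $\R^n$. For the distance bound, the estimate $|\Psi_w(0)|\le C\rho=Cc_0$ from \eqref{eq:control on Psi graph} places the point $w+\delta_0 Q_i(0,\Psi_w(0))\in\mathcal{M}_{put}$ within distance $Cc_0\delta_0$ of $w$; since the constant $C$ depends only on $d,m,n$, we may enforce $Cc_0<c_2/2$ by choosing $c_0$ sufficiently small at the outset of Section \ref{sec:local interpolants}, yielding $d(w,\mathcal{M}_{put})<c_2\delta_0/2$.

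For uniqueness of the nearest point, suppose $z\in\cyl_i^4$ with $d(z,\mathcal{M}_{put})<c_2\delta_0/4$. Any nearest point $\bar z$ satisfies $\bar z\in \mathcal{A}^4_{c_2\delta_0/2}$, so there exists $w\in\mathcal{A}^4$ with $|w-\bar z|<c_2\delta_0/2$, and then $|w-z|<3c_2\delta_0/4$. Thus both $z$ and $\bar z$ lie in the ball of radius $c_2\delta_0$ around $w$ on which Theorem \ref{thm:testing}(3) provides a unique decomposition of any element as a graph point plus a vector normal to the manifold; the closest point of $\mathcal{M}_{put}$ to $z$ within this neighborhood is therefore unique. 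Any alternate candidate $\bar z'$ would satisfy $|z-\bar z'|<c_2\delta_0/4$, hence $|w-\bar z'|<c_2\delta_0$, landing in the same Theorem \ref{thm:testing} neighborhood and forcing $\bar z'=\bar z$. The main technical obstacle is bookkeeping these constants so that the Theorem \ref{thm:testing} neighborhood genuinely contains every candidate nearest point; the precise choice of thickening $c_2\delta_0/2$ in the definition of $\mathcal{M}_{put}\subset \mathcal{A}^4_{c_2\delta_0/2}$ is exactly what makes this work.
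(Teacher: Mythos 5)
Your proposal follows essentially the same route as the paper's proof: apply Lemma \ref{lemma:16 equivalent} and Theorem \ref{thm:testing} to the rescaled $G_w$ at each $w\in\mathcal{A}^4$, read off the local graph description of $\{\Pi_{hi}\nabla^2G=0\}$, get the distance bound from $|\Psi(0)|\le C\rho$ with $\rho=c_0$ chosen small, and get uniqueness of the nearest point from the unique decomposition in Theorem \ref{thm:testing}(3) after checking all candidate nearest points land in the same theorem neighborhood. The only real deviation is your choice $k=m+4$ instead of the paper's $k=m+2$, which is legitimate (since $G$ is built from smooth pieces the derivative bounds hold at any finite order) and in fact delivers the stated $C^{m+2}$ regularity more cleanly.
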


\begin{proof}
    We first show that $\M_{put}$ is indeed a manifold. Given $z'\in \M_{put}$, there exists $w\in \mathcal{A}$ such that $z'=w+\delta_0Q_i(z)$ for some $z\in B_n$. Since $\mathcal{A}^4\subset \cup_i\cyl_i^4$, $z'\in \cup_i\cyl_i^5$.
    
    Define $G_w$ as in \eqref{eq:def G_w}. By taking a rigid motion in $\R^n$, we may assume $w=0\in\mathcal{A}_i$ and $Q_i=I_n$. By Lemma \ref{lemma:16 equivalent}, $G_w$ satisfies the hypotheses of Theorem \ref{thm:testing} with $\rho=c_0$ and $k=m+2$; thus, applying \eqref{eq:def G_w} to the conclusion of Theorem \ref{thm:testing}, there exists

    \begin{equation*}
        \Psi:B_d(0,c_2\delta_0)\to B_{n-d}(0,c_2\delta_0)
    \end{equation*}
    such that
    \begin{equation}\label{eq:control on Psi}
            \{z\in B_d(0,c_2\delta_0)\times B_{n-d}(0,c_2\delta_0)|\Pi_{hi}(z)\nabla^2 G(z)=0\}=\{(x,\Psi(x))|x\in B_d(0,c_2\delta_0)\}
        \end{equation}
and
         \begin{equation*}
            |\Psi(0)|\le Cc_0\delta_0^2\le Cc_0\delta_0; |\partial^\alpha\Psi|\le C^{|\alpha|}\delta_0^{2-|\alpha|}
        \end{equation*}
        on $B_d(0,c_2\delta_0)$ and for $1\le |\alpha|\le m$. Here, $C$ and $c_2$ depend only on $c_1,n,d,k$, and the constants $C$ in \eqref{eq:57.5} and \eqref{eq:used to have rho's}.

        We take $c_0$ small enough that $Cc_0<c_2/4$ so that $|\Psi(0)|<c_2\delta_0^2/4<c_2\delta_0/4$ and
        \begin{equation}\label{eq:M close to A}
            d(z,\mathcal{A}^4)<c_2\delta_0/4 \text{ for all }z\in \M_{put}.
        \end{equation}

        Furthermore, any $z\in B_n(0,c_2\delta_0/2)$ may be expressed uniquely in the form $(x,\Psi(x))+v$ where $x\in B_d(0,c_2\delta_0)$ and $v\in \Pi_{hi}(x,\Psi(x))\R^n\cap B_n(0,c_2\delta_0)$. Furthermore, $x$ and $v$ are both $C^{m}$ functions of $z\in B_n(0,c_2\delta_0)$ with derivatives $\da$ up to order $m$ bounded in absolute value by $C\delta_0^{2-|\alpha|}$.
        
        If $z\in \cyl_i^4$ and $d(z,\M_{put})<c_2\delta_0/4$, by \eqref{eq:M close to A}, $d(z,\mathcal{A}^4)<c_2\delta_0/2$. Thus, $z$ has a unique closest point in the graph of $\Psi$, and therefore on $\M_{put}$.

        Since $\mathcal{A}_{c_2\delta_0/2}$ is open, there exists $\eta>0$ such that $B_n(z',\eta)\subset \mathcal{A}_{c_2\delta_0/2}$. We have shown that $\M_{put}$ agrees with the graph of $\Psi$ on $B_n(z',\eta)$; thus, $\M_{put}$ is a manifold.

        Let $w\in \mathcal{A}^4$. By repeating the application of Theorem \ref{thm:testing} to $G_w$, we find that $\M_{put}$ is locally of the form
        \begin{equation*}
            \{w+Q_i(x,\Psi'(x)):x\in B_d(0,c_2\delta_0)\}
        \end{equation*}
        where $|\Psi'(0)|<c_2\delta_0/2$. Thus, $d(w,\M_{put})<c_2\delta_0/2$.

\end{proof}

Let $\M\subset\R^n$ be a $C^{m+2}$, $d$-dimensional manifold and $D$ be an open set containing $\M$. We say a $C^{m+2}$ map $\pi:D\to \M$ is a \textit{disc bundle} if for any $z\in \M$, $\pi(z)=z$ and $D_z:=\pi^{-1}(z)$ is isometric to $B_{n-d}(0,r)$, where $r$ is independent of $z$.

We call a $C^m$ map $s:\M\to D$ a \textit{section} (or \textit{global section}) of $D$ if for any $z\in \M, s(z)\in D_z$. We say $s$ is a \textit{local section} if it is from $U\to D$, where $U\subset \M$ is an open set.

\begin{proof}[Proof of Theorem \ref{thm:main}]
    
We are now ready to complete the proof of Theorem \ref{thm:main}. Let 
\begin{equation*}
    D_{put}=\{z+rv:-1\le r\le 1, z\in \M_{put},v\in N(z), \|v\|\le \frac{1}{2}c_2\delta_0\},
\end{equation*}
where $N(z)$ is the $(n-d)$ dimensional subspace of $\R^n$ consisting of vectors orthogonal to $T_z\M_{put}$. 

Consider the graphs
\begin{equation*}
    G_i:=\{z_i+Q_i(x,F_i(x)):x\in B_d(0,4\delta_0)\}\subset\cyl_i^4, 1\le i\le N.
\end{equation*}
By \eqref{eq:property 2.3} and taking $c_0$ sufficiently small,
\begin{equation}\label{eq:G_i close to A_i}
    d(z,\mathcal{A}^4_i)<c_0\delta_0<c_2\delta_0/4 \text{ for all }z\in G_i, 1\le i\le N.
\end{equation}

By Lemma \ref{lemma:reach bound}, each point of $\mathcal{A}^4_i$ lies within distance $c_2\delta_0/4$ of $\M_{put}$. By the triangle inequality and \eqref{eq:G_i close to A_i},
\begin{equation*}
    d(z,\M_{put})<c_2\delta_0/2 \text{ for all }z\in G_i, 1\le i\le N..
\end{equation*}

Thus, each point of $G_i$ has a unique closest point on $\M_{put}$, so $G_i$ is the graph of a local section $J_i$ of the disc bundle $D_{put}$ on an open subset of $\M_{put}$ by Lemma \ref{lemma:reach bound}.

We construct a smooth partition of unity on $\M_{put}$ as follows. Let
\begin{equation*}
    \tilde{\theta}_j(z)=\begin{cases}
        \theta\left(\frac{\Pi_d(o_j^{-1}z)}{3\delta_0}\right) & z\in \cyl^3_j\\
        0 & \text{else},
    \end{cases}
\end{equation*}
where $\theta$ is as in \eqref{eq:theta def}. Define
\begin{equation}\label{eq:REF'}
    \theta_j(z)=\frac{\tilde{\theta}_j(z)}{\sum_j \tilde{\theta}_j(z)}.
\end{equation}

Define
\begin{equation}\label{eq:pasting of local sections}
    J(x):=\sum_j\theta_j(x)J_i(x).
\end{equation}
Ideally, $J$ would be a global section of $D_{put}$; however, it may not be well-defined near the boundary of $\M_{put}$ (as all $\tilde{\theta}_j$ may be 0). Instead, we take $J$ as a local section of $D_{put}$ on $\M_{put}\cap(\cup_i\cyl_i^3)$ and observe that
\begin{equation*}
    G:=\{x+J(x):x\in \M_{put}\cap(\cup_i\cyl_i^3)\}
\end{equation*}
is a $C^m$ manifold.

Let $z\in E$ and write $z=x+v$, where $x\in \M_{put}$ and $v\in N(x)$. If $J_i$ is defined on $x$, $z\in G_i$, as $G_i$ agrees with $E$ on $B_n(z_i,20\delta_0)\supset \cyl_i^3$. Thus, $J_i(x)=v$, $J(x)=v$, and $z\in G$. In summary, $E\subset G$.

Similarly, since by \eqref{eq:property 2.6} each $G_i$ must agree with $G_{R_{x_0}}(F_{x_0})$ on $B(x_0,\delta_0)$, $G$ must as well, establishing \eqref{eq:REF_tilde}.

Let $\one_i$ denote the indicator function of $\cyl_i^2$ and fix $\varphi\in C^{\infty}(\R^n,\R)$ such that $0\le \varphi\le 1$, $\varphi\equiv1$ on $B(0,\delta_0/100)$ and $\varphi\equiv 0$ on $\R^n\setminus B(0,\delta_0/50)$.

Define $I=\sum_i\one_i$. The convolution $I*\varphi$ satisfies the following properties:
\begin{itemize}
    \item $I*\varphi\in C^\infty$.
    \item $I*\varphi\equiv 1$ on $\cup_i\cyl^{99/50}_i$.
    \item $I*\varphi\equiv0$ on $\R^n\setminus(\cup_i\cyl_i^{101/50})$.
\end{itemize}

Thus, by Sard's theorem, there exists $0<t<1$ such that $R:=\varphi^{-1}([t,1])$ satisfies the following properties:
\begin{itemize}
    \item $\cup_i \cyl^2_i\subset R\subset \cup_i\cyl_i^{101/50}$.
    \item $R$ has $C^\infty$ boundary $\d R$.
\end{itemize}

To see the first point, observe that $\varphi^{-1}((0,1])$ is an open set which contains the closure of $\cup_i\cyl_i^2$. By taking $t$ small enough (which we can since Sard's theorem says the $C^\infty$ boundary holds for almost every $t$), we can ensure this first condition.

We would like to take $\mathcal{M}\cap R$ as the desired $C^m$ manifold with boundary. Clearly, $E\subset \mathcal{M}\cap R$ since $E\subset \cup_i\cyl_i\subset R$. However, we still need to ensure that $\M\cap R$ has a $C^m$ boundary. This will hold if for all $z\in \M\cap \partial R$, $T_z\M+T_z \partial R=\R^n$.

There is no guarantee that this holds in general. However, we may slightly modify the construction of $R$ as follows to ensure that it does.

Observe that $\M_{put}$ is actually a $C^\infty$ manifold; thus, by \eqref{eq:property 2.7} pasting together local sections via a $C^\infty$ partition of unity creates a global section whose graph is a $C^\infty$ manifold at a distance at least $2\delta_0$ from $E$ (in particular at the intersection with $\partial R$).

By Theorem \ref{thm:a.e. is transverse}, there exists arbitrarily small $v\in \R^n$ such that $v+\partial R$ has transverse intersection with $\M$. Thus, by Theorem \ref{thm:transverse intersection is manifold}, for such a $v$, $\M\cap (v+R)$ is the desired $C^m$ manifold with boundary, as its boundary $\M\cap (v+\partial R)$ is a $C^\infty$ manifold.

\end{proof}

\begin{proof}[Proof of Theorem \ref{thm:m=1 case}]
Recall the end of the proof of Theorem \ref{thm:main}, when the local sections of $\M_{put}$ are pasted together (\eqref{eq:pasting of local sections}). We are given a finite number of $F_i\in C^m(\R^d,\R^{n-d})$, each such that $(Q_i,J_{x_{Q_i}}F_i)\in H(x)$ whenever $x\in E\cap (z_i,\delta_0)$. To prove Theorem \ref{thm:m=1 case}, we simply must show that the pasting process preserves this property; the case of $Q\neq Q_i$ is taken care of by the consistency of $(H(x))_{x\in E}$.

Let $z_0\in E$ and consider the pasting of local sections via the smooth partition of unity at $z_0$. By a rigid motion, we may assume that the closest point of $\M_{put}$ to $z_0$ is the origin $0$ in $\R^n$ and that
\begin{equation}\label{eq:tangent space at origin}
    T_{0}\M_{put}=\R^d.
\end{equation}
The section $J(x)$ may be viewed as the graph of a $C^1$ function $F:\R^d\to \R^{n-d}$. Our goal is to compute $\nabla F(0)$ and show the resulting jet lies in $H(z_0)$.

Let $X(z)$ denote the closest point of $\M_{put}$ and $V(z)=z-X(z)$, as in Theorem \ref{thm:testing}. Then we may write
\begin{equation*}
    (w,F(w))=X((w,F(w))+V((w,F(w)).
\end{equation*}
Choose $w_i$ so that 
\begin{equation}\label{eq:def w_i's}
    (w_i,F_i(w_i))=X((w,F(w))+V((w_i,F_i(w_i)).
\end{equation}

From the pasting process,
\begin{equation}\label{eq:sum v_i's}
    V((w,F(w))=\sum_{i=1}^I\Theta_i(w_i)V((w_i,F_i(w_i)),
\end{equation}
where $F_1,\ldots, F_I$ are the (possibly relabeled) local interpolants on the cylinders containing $z_0$, and $(\Theta_i)_{i=1}^I$ is part of a smooth partition of unity
defined by
\begin{equation*}
    \Theta_i(w_i)=\theta_i(X((w_i,F_i(w_i)))),
\end{equation*}
where the $\theta_i$ are as in \eqref{eq:REF'}. Note that by construction, $F(0)+\nabla F_i(0)\cdot x\in H^{I_n}(0)$.

By \eqref{eq:tangent space at origin} and the twice differentiablity of $\M_{put}$, we have, for $z=(x,y)$ near the origin,
\begin{equation}\label{eq:O(thing^2)}
    X(z)=(x,0_{n-d})+O(|z|^2) \text{ and } V(z)=(0_d,y)+O(|z|^2),
\end{equation}

Observe that applying \eqref{eq:O(thing^2)} to \eqref{eq:def w_i's}, we have
\begin{equation*}
    w_i=w+O(|w_i|^2),
\end{equation*}
or
\begin{equation}\label{eq:approx w_i}
    w=w_i+O(|w|^2)
\end{equation}
by the inverse function theorem. Similarly,
\begin{equation*}
    \Theta_i(w_i)=\Theta_i(w)+O(|w|^2).
\end{equation*}

Thus, by \eqref{eq:sum v_i's}, the differentiability of $F$ and the $F_i$'s, and \eqref{eq:approx w_i}
\begin{align*}
    F(w)&=\sum_{i=1}^I\Theta_i(w)F_i(w) + O(|w|^2)+O\left(\sum_{i=1}^I|w_i|^2\right)\\
    &= \sum_{i=1}^I\Theta_i(w)F_i(w) + O(|w|^2).
\end{align*}

By a standard application of the product rule, and the fact that $\sum_{i=1}^I \nabla \Theta_i(0)=0$
\begin{equation}
    \nabla F(0)=\sum_{i=1}^I \Theta_i(0)\nabla F_i(0).
\end{equation}

The jet $F(0)+\nabla F(0)\cdot x\in H^{I_n}(0)$, as $H^{I_n}(0)$ is an affine space, hence closed under convex combinations.
\end{proof}

\section{On Continuous Selections}\label{sec:topologicla condition}

Given a set $A$, we let $2^A$ denote its power set. If $X$ and $Y$ are topological spaces, we say a map $\varphi:X\to 2^Y$ is \textit{lower semi-continuous} when for every open $V\subset Y$, the set $\{x\in X:\varphi(x)\cap V\neq\emptyset\}$ is an open subset of $X$. We call a function $f:X\to Y$ satisfying $f(x)\in \varphi(x)$ for all $x\in X$ a \textit{selection} for $\varphi$ and $g:A\to Y$ satisfying $f(x)\in\varphi(x)$ for all $x\in A\subset X$ a selection for $\varphi$ on $A$.

Given a topological space $X$, and a cover $\mathcal{U}$ of $X$, a \textit{refinement} of $\mathcal{U}$ is a cover $\mathcal{V}$ such that for all $V\in \mathcal{V}$, there exists $U\in \mathcal{U}$ such that $V\subset U$. The \textit{covering dimension} of $X$, denoted $\dim X$, is the smallest number $n$ such that every finite open covering $\mathcal{U}$ of $X$ has a finite, open refinement $\mathcal{V}$ such that each $x\in X$ is in at most $n+1$ elements of $\mathcal{V}$. For a closed subset $A\subset X$, we let the covering dimension of $X\setminus A$ in $X$, denoted $\dim_X(X\setminus A)$, be the smallest $n$ such that $\dim C\le n$ for all closed $C\subset X\setminus A$.

We will use the fact that covering dimension is a topological invariant, that $\dim Y\le\dim X$ for closed $Y\subset X$, that $[0,1]^d$ has covering dimension $d$ (a result known as Lebesgue's covering theorem \cite{Lebesgue}), and the countable sum theorem (see Theorem 3.6 of \cite{charalambous2019dimension}, for instance).

\begin{theorem}[Countable Sum Theorem]\label{thm:countable sum}
    Suppose $X=\cup_{i=1}^\infty X_i$, where $X$ is a normal space and each $X_i$ is a closed subspace of $X$ with $\dim X_i\le n$. Then $\dim X\le n$.
\end{theorem}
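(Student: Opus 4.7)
The plan is to use the standard equivalent formulation of covering dimension: $\dim X \le n$ iff every finite open cover of $X$ admits an open refinement of order at most $n+1$, meaning no point lies in more than $n+1$ refining sets. Since the hypothesis handles countable closed covers while the definition involves finite open covers, the strategy is to fix an arbitrary finite open cover $\mathcal{U} = \{U_1,\ldots,U_s\}$ of $X$ and construct a single order-$(n+1)$ open refinement by inductively applying the hypothesis $\dim X_i \le n$ on the $X_i$ one at a time.

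I would build open covers $\mathcal{V}_i$ of $X$, each refining $\mathcal{U}$, with the property that the order of $\mathcal{V}_i$ at every point of $X_1 \cup \ldots \cup X_i$ is at most $n+1$. Start with $\mathcal{V}_0 := \mathcal{U}$. For the inductive step, restrict $\mathcal{V}_{i-1}$ to $X_i$ to get an open cover of $X_i$, and use $\dim X_i \le n$ to refine it (in $X_i$) to an open cover $\mathcal{W}_i$ of order at most $n+1$. Using normality of $X$ and closedness of $X_i$, thicken each $W \in \mathcal{W}_i$ to an open $\tilde W \subset X$ with $\tilde W \cap X_i = W$ and $\tilde W$ lying inside the parent element from $\mathcal{V}_{i-1}$. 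Choose an open neighborhood $N_i$ of $X_i$ on which the $\tilde W$'s already cover $X$, and retain the original elements of $\mathcal{V}_{i-1}$ (suitably shrunk away from $X_i$) outside $N_i$; the combination is $\mathcal{V}_i$.

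The main obstacle, and the reason the countable sum theorem is substantively harder than its finite analogue, lies in producing a single cover $\mathcal{V}$ from the infinite sequence $(\mathcal{V}_i)$ that simultaneously (i) refines $\mathcal{U}$, (ii) is open, and (iii) has order $\le n+1$ everywhere on $X = \bigcup_i X_i$. A naive union $\bigcup_i \mathcal{V}_i$ destroys the order bound, while a naive intersection $\bigcap_i \mathcal{V}_i$ destroys openness. The standard resolution is to arrange the inductive modifications so that the difference between $\mathcal{V}_i$ and $\mathcal{V}_{i-1}$ is confined to a progressively small open neighborhood of $X_i$ that avoids an appropriately chosen closed swelling of $X_1 \cup \ldots \cup X_{i-1}$ on which the order bound has already been achieved. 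Because every point of $X$ belongs to some $X_{i_0}$, the sequence $\mathcal{V}_i$ stabilizes on a neighborhood of that point after finitely many stages, and the pointwise limit is a genuine open refinement of $\mathcal{U}$ with order $\le n+1$. Executing this neighborhood-separation argument carefully enough to propagate both the refinement property and the order bound simultaneously through all stages is the delicate heart of the proof and the reason one invokes normality repeatedly.
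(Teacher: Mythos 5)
The paper itself offers no proof of this statement; it simply cites it as a classical result (Theorem 3.6 of the Charalambous reference), so your sketch has to stand on its own. Its overall strategy --- reduce to a fixed finite open cover $\mathcal{U}$, handle the $X_i$ one at a time, and pass to a locally stabilizing limit --- is viable and close to the classical argument, but two specific steps fail as written, and they sit exactly where you locate ``the delicate heart.'' First, thickening each $W\in\mathcal{W}_i$ to an open $\tilde{W}\subset X$ with $\tilde{W}\cap X_i=W$ controls only the trace on $X_i$: it yields order $\le n+1$ at points of $X_i$ and nowhere else. For a finite open family the set of points of order $\ge n+2$ is open (a finite union of $(n+2)$-fold intersections of open sets), so the set where your bound holds is closed; there is no reason it should contain a neighborhood, i.e.\ a ``closed swelling,'' of $X_1\cup\cdots\cup X_i$. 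Yet your limiting argument protects precisely such a swelling ``on which the order bound has already been achieved,'' so as written the induction has nothing to protect. The standard repair is missing from the sketch: pass to a closed (in $X_i$) shrinking of the order-$(n+1)$ cover of $X_i$ and, using normality, swell it to open subsets of $X$ with the same nerve; combined with cutting the retained old sets off a closed neighborhood of $X_i$, this does give the order bound on an open neighborhood.

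Second, because the new sets are indexed by $\mathcal{W}_i$ rather than by the fixed index set of $\mathcal{U}$, your $\mathcal{V}_i$ is a refinement, not a shrinking, of $\mathcal{V}_{i-1}$, and pointwise order is not monotone under refinement: many new sets inside a single parent can meet at one point, so a bound achieved at earlier stages can be destroyed later (the new $\tilde{W}$'s will in general intrude on $X_1\cup\cdots\cup X_{i-1}$, since the $X_j$ overlap). The classical proof works with shrinkings over the fixed index set $\{1,\dots,s\}$ exactly to get this monotonicity. Relatedly, the stabilization claim --- that $\mathcal{V}_i$ is eventually constant near each $x$ because $x\in X_{i_0}$ --- does not follow merely from confining stage-$i$ changes to a neighborhood of $X_i$: later $X_i$'s may intersect or accumulate at $x$, so changes can occur arbitrarily close to $x$ at infinitely many stages. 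It becomes correct only once you maintain an increasing chain of closed neighborhoods $C_i\supset X_1\cup\cdots\cup X_i$ on which the order bound holds (possible after the two repairs above) and require all stage-$(i+1)$ modifications to avoid $C_i$; then the limit cover, defined on each $\operatorname{int} C_j$ by the stabilized value, is open, covers $X$, refines $\mathcal{U}$, and has order $\le n+1$. With those repairs your outline closes; the cited textbook proof instead uses same-index shrinkings and a slightly different bookkeeping for the limit.
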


We say $X$ is $n$-\textit{connected} if every continuous image of $S^m$ ($m\le n$) in $X$ is contractible. In other words, $\pi_m(X)=\{0\}$ for all $m\le n$, where $\pi_m(X)$ refers to the $m$-th homotopy group of $X$. Given $\mathcal{S}\subset 2^X$, we say $\mathcal{S}$ is equi-$LC^n$ if for every $y\in \cup_{S\in\mathcal{S}}S$ and open $U\ni y$, there exists open $V\ni y$ such that, for every $S\in \mathcal{S}$, every continuous image of an $m$-sphere ($m\le n$) in $S\cap V$ is contractible in $S\cap U$.

The following theorem follows from Theorem 1.2 in \cite{michael1956continuous}.

\begin{theorem}\label{thm:Michael II}
    Let $X$ be a paracompact space, $A\subset X$ closed with $\dim_X(X\setminus A)\le D+1$, $Y$ a complete metric space, $\mathcal{S}\subset 2^Y$ equi-$LC^D$, and $\varphi:X\to 2^Y$ lower semi-continuous.

    If every $S\in\mathcal{S}$ is $D$-connected, then given any selection $f:A\to Y$ for $\varphi$ on a closed $A\subset X$, there exists a selection $F:X \to Y$ for $\varphi$ such that $F(x)=f(x)$ for $x\in A$.
\end{theorem}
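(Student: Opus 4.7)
The plan is to invoke Michael's selection theorem directly, after reformulating the extension problem as a pure existence problem by encoding the prescribed partial selection into the set-valued map. Specifically, define $\psi:X\to 2^Y$ by
\begin{equation*}
\psi(x)=\begin{cases} \{f(x)\} & x\in A \\ \varphi(x) & x\in X\setminus A. \end{cases}
\end{equation*}
If $\psi$ admits a continuous selection $F:X\to Y$, then by construction $F(x)=f(x)$ for $x\in A$ and $F(x)\in\varphi(x)$ for $x\in X\setminus A$, giving the required extension.

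The next step is to verify that $\psi$ satisfies the hypotheses of Theorem 1.2 in \cite{michael1956continuous}. Paracompactness of $X$, completeness of $Y$, and the dimension bound $\dim_X(X\setminus A)\le D+1$ transfer verbatim. The family of fibers of $\psi$ sits inside $\mathcal{S}\cup\{\{y\}:y\in Y\}$; since singletons are contractible and therefore trivially $D$-connected and trivially equi-$LC^D$ when adjoined to any equi-$LC^D$ family, the enlarged family is still equi-$LC^D$ with $D$-connected members.

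The only nontrivial verification is lower semi-continuity of $\psi$. Fix open $V\subset Y$ and $x_0$ with $\psi(x_0)\cap V\neq\emptyset$. If $x_0\in X\setminus A$, then since $X\setminus A$ is open and $\varphi$ is lower semi-continuous with $\varphi(x_0)\cap V\neq\emptyset$, a neighborhood of $x_0$ in $X\setminus A$ maps into $\{x:\psi(x)\cap V\neq\emptyset\}$. If $x_0\in A$ then $f(x_0)\in V$; continuity of $f$ on $A$ provides a neighborhood $U$ of $x_0$ with $f(U\cap A)\subset V$, and since $f(x_0)\in\varphi(x_0)\cap V$, lower semi-continuity of $\varphi$ provides a (possibly smaller) neighborhood $U'$ with $\varphi(x)\cap V\neq\emptyset$ for all $x\in U'$. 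On $U\cap U'$, $\psi(x)\cap V\neq\emptyset$ in both cases.

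The main obstacle is thus entirely in this lower semi-continuity check at the boundary of $A$, where both the continuity of $f$ and the compatibility $f(x_0)\in\varphi(x_0)$ are essential; once $\psi$ is shown to be lower semi-continuous, Michael's Theorem 1.2 applied to $\psi$ produces the required selection $F$ and completes the proof.
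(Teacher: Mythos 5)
Your reduction has a genuine gap at the dimension hypothesis. The statement you are trying to prove is precisely the \emph{relative} form of Michael's Theorem 1.2 (this is in fact all the paper does: it cites that theorem, whose conclusion is already the extension of a selection given on a closed set $A$). You instead try to derive it from an \emph{absolute} existence theorem by collapsing the fibers over $A$ to the singletons $\{f(x)\}$. That trick does repair the fiberwise hypotheses over $A$: your verification that $\psi$ is lower semi-continuous is correct, singletons are $D$-connected, and adjoining singletons to an equi-$LC^D$ family keeps it equi-$LC^D$ (and since $f(x)\in\varphi(x)$, no new points are added to the union of the family). But the dimension hypothesis in the absolute selection theorem is a condition on the \emph{domain}, not on the fibers: with no closed set in play, applying it to $\psi$ requires $\dim X\le D+1$, i.e.\ a covering-dimension bound on all of $X$ including $A$. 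The theorem you must prove only assumes $\dim_X(X\setminus A)\le D+1$, and nothing bounds the dimension of $A$. So the claim that ``the dimension bound $\dim_X(X\setminus A)\le D+1$ transfers verbatim'' is exactly where the argument breaks: after your reformulation there is no closed set left to excise, and the needed bound on $\dim X$ is simply not available. Collapsing fibers to points does not lower the covering dimension of the domain, which is what the nerve/skeleton construction behind the absolute theorem uses.

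Concretely, your argument proves the statement only under the stronger hypothesis $\dim X\le D+1$; it does not yield the stated theorem, whose whole point (in Michael's formulation) is that the dimension restriction is imposed only off $A$, because over $A$ the selection is already prescribed. If instead you invoke Michael's Theorem 1.2 in its full relative form (paracompact $X$, closed $A$, $\dim_X(X\setminus A)\le D+1$, closed values lying in an equi-$LC^D$ family of $D$-connected sets), then the conclusion is immediate for $\varphi$ and $f$ themselves and the auxiliary map $\psi$ is unnecessary. (Incidentally, for the use made of this theorem later in the paper one has $A=\emptyset$ and $\dim E\le d=D+1$, so there the absolute version would suffice; but as a proof of the theorem as stated, the gap above is real.)
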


\begin{lemma}\label{lemma:dim le d}
    Let $E\subset\R^n$ be compact and let $(H(x))_{x\in E}$ be a nontrivial, Glaeser-stable, proper M-bundle. Then,
    \begin{equation*}
        \dim E\le d.
    \end{equation*}
\end{lemma}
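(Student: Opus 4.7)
The plan is to show that $E$ is locally contained in a $d$-dimensional $C^m$ manifold, and then glue local dimension bounds together using the Countable Sum Theorem. The key input is Lemma \ref{lemma:construct local interpolant}, which applies because the hypotheses on the M-bundle are precisely what that lemma needs.

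More concretely, I would proceed as follows. Fix an arbitrary $x_0 \in E$. By nontriviality, pick any $(Q, P) \in H(x_0)$. Since $(H(x))_{x\in E}$ is proper and Glaeser-stable, Lemma \ref{lemma:construct local interpolant} produces $\delta = \delta(x_0) > 0$ and $F \in C^m(\R^d, \R^{n-d})$ such that
\begin{equation*}
    G_Q(F) \supset E \cap B(x_0, \delta).
\end{equation*}
The projection map $x \mapsto x_Q$ sends $G_Q(F)$ homeomorphically onto $\R^d$, so it restricts to a homeomorphism of $E \cap B(x_0, \delta)$ onto a subset of $\R^d$. Since covering dimension is a topological invariant, and since any subset of $\R^d$ has covering dimension at most $d$ (a consequence of Lebesgue's covering theorem together with monotonicity of $\dim$ under taking closed subsets of separable metric spaces), we conclude that
\begin{equation*}
    \dim (E \cap \overline{B(x_0, \delta/2)}) \le d.
\end{equation*}

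Now I would use compactness of $E$ to extract a finite cover: there exist $x_1, \ldots, x_N \in E$ with corresponding radii $\delta_i = \delta(x_i)$ such that $E \subset \bigcup_{i=1}^N B(x_i, \delta_i / 2)$. Setting $E_i = E \cap \overline{B(x_i, \delta_i / 2)}$, each $E_i$ is closed in $E$ (which is a normal space, being compact metric), and $E = \bigcup_{i=1}^N E_i$ with $\dim E_i \le d$ for each $i$. Applying Theorem \ref{thm:countable sum} (the Countable Sum Theorem) to this finite decomposition yields $\dim E \le d$.

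I do not anticipate a serious obstacle: the lemma is essentially a bookkeeping consequence of the fact that the hypotheses on the M-bundle force $E$ to locally fit inside a $d$-dimensional graph. The only subtlety worth double-checking is the monotonicity of covering dimension for (non-closed) subsets of $\R^d$, but this is standard in the separable metric setting, and in any case one can arrange closed subsets directly by taking the closed balls $\overline{B(x_i, \delta_i/2)}$ inside the open balls on which Lemma \ref{lemma:construct local interpolant} produces the local graph.
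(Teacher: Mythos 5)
Your proposal is correct and follows essentially the same route as the paper: local containment of $E$ in a $d$-dimensional graph via Lemma \ref{lemma:construct local interpolant}, a finite cover by compactness, Lebesgue's covering theorem with topological invariance of $\dim$, and the Countable Sum Theorem. The only (harmless) difference is bookkeeping: you apply the sum theorem directly to $E=\bigcup_i E_i$ with each compact piece $E_i$ embedded in $\R^d$, while the paper bounds the dimension of the union $\tilde{E}$ of closures of the graph pieces and then uses monotonicity for the closed subset $E\subset\tilde{E}$.
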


\begin{proof}
Similar to the start of the proof of Theorem \ref{thm:main}, we may apply Lemma \ref{lemma:construct local interpolant} to obtain, for each $x\in X$, $\delta_x>0$, $Q_x\in O(n)$, $F_x\in C^m(\R^d,\R^{n-d})$ such that
\begin{equation*}
    G_{Q_x}(F_x)\cap B_n(x,2\delta_x)\supset E\cap B_n(x,2\delta_x).
\end{equation*}

By compactness of $E$, there exist $x_1,\ldots, x_n\in E$ such that
\begin{equation}\label{eq:Etilde}
    E\subset=\bigcup_{1\le i\le n} G_{Q_i}(F_i)\cap B_n(x_i,\delta_i)\subset \bigcup_{1\le i\le n} \overline{G_{Q_i}(F_i)\cap B_n(x_i,\delta_i)} := \tilde{E},
\end{equation}
where $Q_i=Q_{x_i}, F_i=F_{x_i}$, $\delta_i=\delta_{x_i}$, and $\overline{A}$ denotes the closure of the set $A$.

By Lebesgue's covering theorem and the fact covering dimension is a topological invariant, 
\begin{equation}\label{eq:dim is d}
    \dim [\overline{G_{Q_i}(F_i)\cap B_n(x_i,\delta_i)}]=d.
\end{equation}

By \eqref{eq:dim is d} and Theorem \ref{thm:countable sum} applied to \eqref{eq:Etilde}, $\dim\tilde{E}=d$; thus, $\dim E\le \dim \tilde{E}\le d$.
\end{proof}

\begin{lemma}\label{lemma:lsc}
    Let $E\subset\R^n$ be compact and let $(H(x))_{x\in E}$ be a nontrivial, Glaeser-stable, proper M-bundle. The mapping $\varphi:E\to 2^{\Gr(n,d)}$ by $x\mapsto \hat{H}(x)$ is lower semi-continuous.
\end{lemma}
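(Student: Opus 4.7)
The plan is to verify lower semi-continuity directly from the definition: given an open $V \subset \Gr(n,d)$, we must show that $U := \{x \in E : \hat{H}(x) \cap V \neq \emptyset\}$ is open in $E$. Fix any $x_0 \in U$, pick $W_0 \in \hat{H}(x_0) \cap V$, and find $(Q,P) \in H(x_0)$ with $Q \oplus \nabla P((x_0)_Q) = W_0$. It suffices to produce a neighborhood $E \cap B(x_0,\delta')$ on which each point admits a tangent-space candidate in $V$.

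The key input is Lemma \ref{lemma:construct local interpolant}: because $(H(x))_{x \in E}$ is nontrivial, Glaeser-stable, and proper, there exist $\delta > 0$ and $F \in C^m(\R^d,\R^{n-d})$ such that $J_{(x_0)_Q}F = P$ and
\begin{equation*}
J_{x_Q} F \in H^Q(x) \quad \text{for all } x \in E \cap B(x_0,\delta).
\end{equation*}
In particular, for each such $x$, the plane $W(x) := Q \oplus \nabla F(x_Q) \in \hat{H}(x)$. Since $F \in C^m$, the map $x \mapsto Q \oplus \nabla F(x_Q)$ is continuous $\R^n \to \Gr(n,d)$ (the $\oplus$ operation is continuous, and $\nabla F$ is continuous), and $W(x_0) = W_0 \in V$.

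By continuity, there exists $0 < \delta' \le \delta$ with $W(x) \in V$ for all $x \in E \cap B(x_0,\delta')$; hence $\hat{H}(x) \cap V \neq \emptyset$ on this neighborhood, so $E \cap B(x_0,\delta') \subset U$. As $x_0 \in U$ was arbitrary, $U$ is open in $E$, proving lower semi-continuity. I do not anticipate any real obstacle here — Lemma \ref{lemma:construct local interpolant} is essentially tailored to this purpose, and the only subtlety is to note that continuity in the Grassmannian metric \eqref{eq:Grass dist} follows from continuity of $\nabla F$ together with continuity of the $\oplus$ construction (this can be seen by inspecting that $\Pi_{Q \oplus A}$ depends continuously on $A$ via standard linear algebra applied to the column space of $Q\begin{bmatrix} I_d \\ A \end{bmatrix}$).
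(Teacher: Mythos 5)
Your proposal is correct and follows essentially the same route as the paper: invoke Lemma \ref{lemma:construct local interpolant} to get a local interpolant $F$ with $J_{x_Q}F\in H^Q(x)$ near $x_0$, then use continuity of $\nabla F$ together with continuity of $A\mapsto Q\oplus A$ into $\Gr(n,d)$ to keep the candidate planes inside $V$ on a small neighborhood. The only cosmetic difference is that the paper splits the continuity step into two explicit $\epsilon$--$\eta$ stages, while you state it as continuity of the composite map $x\mapsto Q\oplus\nabla F(x_Q)$; the content is the same.
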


\begin{proof}
        Let $O\subset \Gr(n,d)$ be open and suppose $O\cap \hat{H}(x)\neq\emptyset$ for some $x\in E$. In particular, let $W_0\in O\cap \hat{H}(x)$.
    
    Since $W_0\in \hat{H}(x)$, there exists $(Q,P)\in H(x)$ such that $W_0=Q\oplus \nabla P(x_Q)$ and choose $\epsilon>0$ such that
    \begin{equation*}
        B_{\Gr}(W_0,\epsilon):=\{W\in \Gr(n,d):\dist(W,W_0)<\epsilon\}\subset O.
    \end{equation*}
    
    By Lemma \ref{lemma:construct local interpolant}, there exist $\delta>0$ and $F\in C^m(\R^d,\R^{n-d})$ such that $G_Q(F)\supset E\cap B(x,\delta)$ and $J_{x_Q}F=P$, thus $W_0=Q\oplus \nabla F(x_Q)$. Furthermore,
    \begin{equation}\label{eq:sec 7 jet containment}
        J_{y_Q}F\in H^Q(y) \text{ for all }y\in B(x,\delta).
    \end{equation}
    The mapping which takes an $(n-d)\times d$ matrix $M$ to $W_M:=Q\oplus M\in \Gr(n,d)$ is continuous in $M$. Thus, there exists $\eta>0$ such that $\|M-\nabla F(x_Q)\|<\eta$ implies $W_M\in B_{\Gr}(W_0,\epsilon)$.

Since $\nabla F(x_Q)$ is continuous in $x_Q$, there exists $0<\delta'<\delta$ such that $|x_Q'-x_Q|<\delta'$ implies $\|\nabla F(x_Q)-\nabla F(x'_Q)\|<\eta$ for $x'\in E$. Observe that $(Q,\nabla F(x'_Q))\in H(x')$ by \eqref{eq:sec 7 jet containment}, so $Q\oplus \nabla F(x'_Q)\in \hat{H}(x')$.

Suppose $|x'-x|<\delta'$ for $x'\in E$. Then, $|x_Q'-x_Q|<\delta$, so by the above reasoning there exists $W'\in \hat{H}(x')$ (namely, $Q\oplus \nabla F(x'_Q)$) such that $W'\in B(W,\epsilon)\subset O$. In particular, $\hat{H}(x')\cap O\neq\emptyset$, proving our claim.   
\end{proof}

In the following lemma, we restrict ourselves to M-bundles arising naturally from manifold extension, with no further constraints. However, one may be able to replicate our analysis for other M-bundles analogously.

\begin{lemma}\label{lemma:sub-Grassmannians}
    Let $E\subset\R^n$ be compact and consider the M-bundle $(H(x))_{x\in E}$, where $H(x)=H_{l^*}(x)$, the $l^*$-th Glaeser refinement of \eqref{eq:def H_0}.
    
    Let $x\in E$. Then, there exist orthonormal $v_1,\ldots,v_l\in \R^n$ such that
\begin{equation}\label{eq:sub-Grass conclusion}
\hat{H}(x)=\{W\in \Gr(n,d):v_1,\ldots,v_l\in W\}.
\end{equation}    
    
     In particular, $\hat{H}(x)$ is a submanifold of $\Gr(n,d)$ which is homeomorphic to $\Gr(n-l,d-l)$.
\end{lemma}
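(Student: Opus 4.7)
Define $V(x) := \bigcap_{W \in \hat{H}(x)} W$, a linear subspace of $\R^n$, and set $l := \dim V(x)$; choose an orthonormal basis $v_1, \ldots, v_l$ of $V(x)$. The inclusion $\hat{H}(x) \subset \{W : v_1, \ldots, v_l \in W\}$ is immediate from the definition. Once the reverse inclusion is established, the final homeomorphism with $\Gr(n-l,d-l)$ follows from the standard identification of the Schubert cycle $\{W : V(x) \subset W\}$ with $\Gr(n-l,d-l)$ via $W \mapsto W/V(x)$. Since $\{W : V(x) \subset W\}$ is connected and $\hat{H}(x)$ is nonempty, I reduce the reverse inclusion to showing that $\hat H(x)$ is both open and closed inside $\{W : V(x) \subset W\}$.

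The key geometric input is that every accumulation direction of $E$ at $x$ lies in $V(x)$: for any sequence $x_k \to x$ in $E$ and any $W' \in \hat{H}(x)$, Lemma \ref{lemma:construct local interpolant} yields a local $C^m$ manifold $\M_{W'}$ with tangent $x+W'$ containing $\{x_k\}_{k\gg 1}$, so any limit of $(x_k - x)/|x_k - x|$ lies in $W'$; intersecting over $W' \in \hat{H}(x)$ places it in $V(x)$. Closedness of $\hat H(x)$ will follow from a compactness argument: given a sequence $W_k \to W$ with $W_k \in \hat{H}(x)$, pick representatives $(Q_k, P_k) \in H(x)$; pass to a subsequence with $Q_k \to Q$ (using compactness of $O(n)$) and with a suitably normalized subsequence of the $P_k$ converging to some $(Q, P) \in H(x)$, giving $Q \oplus \nabla P(x_Q) = W \in \hat{H}(x)$.

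For openness at $W_0 \in \hat{H}(x)$: choose $Q_0$ with $\Pi Q_0 = W_0$ and apply Lemma \ref{lemma:construct local interpolant} to obtain a local interpolant $F_0 \in C^m(\R^d,\R^{n-d})$ with $\nabla F_0(x_{Q_0}) = 0$ and $G_{Q_0}(F_0) \supset E \cap B(x,\delta_0)$. Let $V' \subset \R^d$ be the image of $V(x)$ in $Q_0$-coordinates (a subspace of dimension $l$). For $W' = Q_0 \oplus M'$ near $W_0$ with $M' V' = 0$, my plan is to construct $F' = F_0 + G$ with $G \equiv 0$ on $\tilde E := \Pi_{\R^d}(E \cap B(x,\delta_0))$ and $\nabla G(x_{Q_0}) = M'$. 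Then $G_{Q_0}(F')$ still contains $E$ near $x$ and has tangent $x + W'$ at $x$, so $W' \in \hat{H}(x)$ by the converse direction of Lemma \ref{lemma:construct local interpolant}. To build $G$, I apply the classical vector-valued Whitney extension theorem (Theorem \ref{thm:v-valued}) to a bundle on $\tilde E \cup \{x_{Q_0}\}$ with prescribed jets encoding $G \equiv 0$ on $\tilde E$ and a jet at $x_{Q_0}$ with linear part $M'$.

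The main obstacle is verifying Glaeser-stability of this bundle at $x_{Q_0}$. The prescribed linear part $M'$ is nonzero while the prescribed jets on $\tilde E$ force $G$ to vanish there, creating a potential first-order discrepancy; however, $M' V' = 0$ together with the directional input forces $|M'(y - x_{Q_0})| = |M'(y - x_{Q_0})_{V'^\perp}| = o(|y - x_{Q_0}|)$ as $y \to x_{Q_0}$ in $\tilde E$, supplying the needed compatibility at $\alpha = 0$ in the case $m = 1$. For $m \ge 2$ and higher derivative orders, I would prescribe the higher-order terms of the jet at $x_{Q_0}$ (beyond the linear part $M'$) so that they absorb the corresponding higher-order errors arising from the finer shape of $\tilde E$ relative to $V'$; this book-keeping is where the full force of the Glaeser stability of $\mathcal H$ and the $C^m$ smoothness of $F_0$ are needed. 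Once Glaeser compatibility is established through order $m$, Theorem \ref{thm:v-valued} produces $G$, completing the openness step and hence the proof.
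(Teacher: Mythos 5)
There is a genuine gap, and it sits exactly where the real content of the lemma lies. Your openness step reduces the reverse inclusion to the claim that every $M'$ with $M'V'=0$ arises as $\nabla G(x_{Q_0})$ for some $G\in C^m(\R^d,\R^{n-d})$ vanishing on $\tilde E$, and you propose to produce $G$ from Theorem \ref{thm:v-valued} after verifying Glaeser compatibility of an auxiliary bundle on $\tilde E\cup\{x_{Q_0}\}$. But your verification only checks compatibility between the prescribed jet at the center and \emph{individual} nearby points $y\in\tilde E$, using the fact that accumulation directions of $\tilde E$ at $x_{Q_0}$ lie in $V'$ (so $|M'(y-x_{Q_0})|=o(|y-x_{Q_0}|)$). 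Glaeser refinement also imposes conditions among $\bar k$-tuples of points of $\tilde E$, and these are not controlled by accumulation directions at the center: one can have pairs $y_i,y_j\to x_{Q_0}$ with $|y_i-y_j|$ far smaller than their distance to $x_{Q_0}$ and with difference direction transverse to $V'$ (for instance the pairs $(1/j,0)$ and $(1/j,e^{-j})$ in $\R^2$, whose only accumulation direction at $0$ is $e_1$). Since compatibility with the center jet forces $\nabla P_i\approx M'$ at order $|\alpha|=1$, the $|\alpha|=0$ condition $|(P_i-P_j)(y_i)|\le\eps|y_i-y_j|$ then fails for such pairs unless $M'$ also annihilates these difference directions. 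So what you actually need is that $(V')^{\perp}$ is contained in the space of gradients at $x_{Q_0}$ of $C^m$ functions vanishing on $\tilde E$; equivalently, that the ``pinned'' directions determined by Fefferman's ideal $I(x)$, with their canonical lifts, lie in every $W\in\hat{H}(x)$, in every chart. That statement is essentially the lemma itself. The paper obtains it from the structural fact that Glaeser-stable fibers of bundles built as in \eqref{eq:def H_0} are translates of a common ideal (Lemma 2.1 of \cite{F06}), applied componentwise, followed by a chart-agreement argument; nothing in your sketch substitutes for that input, and your explicit deferral of the $m\ge2$ case (``prescribe higher-order terms to absorb the errors'') is part of the same unproved step.

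The closedness half of your open-and-closed scheme is also unjustified: you extract $Q_k\to Q$ and assert that a ``suitably normalized'' subsequence of $P_k\in H^{Q_k}(x)$ converges to some $P$ with $(Q,P)\in H(x)$. Nothing established in the paper gives upper semicontinuity of $Q\mapsto H^{Q}(x)$, nor boundedness of the full jets $P_k$ (convergence $W_k\to W$ only controls their gradients), so the limit jet need not exist, and even if it does there is no argument that it lies in $H^{Q}(x)$. The paper avoids this issue entirely: the ideal structure gives the full Schubert description within each chart $\varphi_Q(\R^{(n-d)\times d})$ with $H^Q(x)\neq\emptyset$, and consistency plus the overlap argument transports the same spanning vectors $v_1,\ldots,v_l$ to every chart meeting $\hat{H}(x)$, so no topological connectedness or compactness argument in $\Gr(n,d)$ is needed.
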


\begin{proof}
For $Q\in O(n)$ define
    \begin{equation}\label{eq:define hat H^Q}
        \hat{H}^Q(x):=\{Q\oplus\nabla P(x_Q):P\in H^Q(x)\}\subset\hat{H}(x).
    \end{equation}

Define $\varphi_Q:\R^{(n-d)\times d}\to \Gr(n,d)$ by
\begin{equation*}
    \varphi_Q(M)=Q\oplus M,
\end{equation*}
the graph of the linear map represented by $M$ in $Q$-coordinates. This is a well known coordinate chart for $\Gr(n,d)$ (see Example 1.36 of \cite{lee2012smooth}, for example).

We claim that
\begin{equation*}
    \hat{H}^Q(x)=\hat{H}(x)\cap \varphi_Q(\R^{(n-d)\times d}).
\end{equation*}

By definition, $\hat{H}^Q(x)\subset \varphi_Q(\R^{(n-d)\times d})$, so by \eqref{eq:define hat H^Q}, $\hat{H}^Q(x)\subset\hat{H}(x)\cap \varphi(\R^{(n-d)\times d})$.

Now let $W\in \hat{H}(x)\cap \varphi_Q(\R^{(n-d)\times d})$ and choose $(Q',P')\in H(x)$ such that $Q'\oplus \nabla P'(x_{Q'})=W$. Since $(H(x))_{x\in E}$ is consistent and $W$ is the graph of a function in $Q$-coordinates, $(Q,P):=J_{x,Q}(Q',P')\in H(x)$ and by Remark \ref{rmk:only remark (so far)}, $Q\oplus \nabla P(x_Q)=W$. Thus, $W\in \hat{H}^Q(x)$, proving our claim.

Fix $Q\in O(n)$ such that $H^Q(x)$ is nonempty. Since $H^Q(x)$ is an affine space and evaluation of the gradient of a polynomial at $x_Q$ is a linear map, $\varphi_Q^{-1}(\hat{H}(x))\subset \R^{(n-d)\times d}$ is an affine space. This is consistent with \eqref{eq:sub-Grass conclusion}, but we will need more.

As in the beginning of the proof of Lemma \ref{lemma:construct local interpolant}, choose $\delta>0$ so that $y\mapsto y_Q$ is injective on $E\cap B_n(x,\delta)$. Let $E'=\{y_Q\in \R^d: y\in B_n(x,\delta)\}$ and $f=(f_1,\ldots,f_{n-d})$, where $f:E'\to \R^{n-d}$ by $y_Q\mapsto y_Q^\perp$.

Observe that Glaeser refinement of M-bundles treats the different components $P_1,\ldots, P_{n-d}$ of polynomials $P=(P_1,\ldots, P_{n-d})\in \p^m(\R^d,\R^{n-d})$ independently.

That is, one may compute $H^Q(x)$ from $H^Q_0(x)$ through separate Glaeser refinements of the M-bundles $(H^j(t))_{t\in E'}$ ($1\le j\le D-d$), where
\begin{equation*}
    (H^j)^Q(y):=\{P\in\p^m(\R^d,\R):P(y_Q)=f_j(y_Q)\}.
\end{equation*}

It is a standard result that the fibers of (usual) bundles defined in this way are translates of ideals in $\p(\R^d,\R)$, and that this property is preserved under Glaeser refinement, provided the fiber remains nonempty (see Lemma 2.1 in \cite{F06}). Furthermore, this ideal depends only on $E'$. This result carries over trivially to the case of M-bundles, thus
\begin{equation}\label{eq:def H^j}
    (H^j)^Q(x)=g_j(x)+I(x),
\end{equation}
where $g_j(x)\in \p(\R^d,\R)$ and $I(x)\subset \p(\R^d,\R)$ is an ideal.

Let
\begin{equation*}
    I'(x)=\{v\in \R^d:\exists P\in I(x), \nabla P(x)=v\}.
\end{equation*}

$I'(x)$ is a subspace of $\R^d$; thus, there exist orthonormal $w_1,\ldots,w_l\in \R^d$ such that
\begin{equation*}
    I'(x)=\{v\in \R^d:\nabla P(x)\cdot w_i=0 \text{ for all } P\in I(x), 1\le i\ le l\}.
\end{equation*}
By rotation in $\R^d$, we may assume $w_1=e_1,\ldots,w_l=e_l$.

Thus,
\begin{equation}\label{eq:use Pi_l}
   I'(x)=\{v\in \R^d:\Pi_lv=0\},
\end{equation}
where $\Pi_l:\R^d\to\R^l$ is the orthogonal projection onto the first $l$ coordinates.

Combining \eqref{eq:def H^j} with \eqref{eq:use Pi_l},
\begin{equation*}
    \{\nabla P:P\in \bar{H}^j(x)\}=\{v\in \R^d:\Pi_l v=(\partial_1 g_j(x),\ldots,\partial_l g_j(x))\}.
\end{equation*}

Putting it all together,
\begin{equation*}
    \hat{H}(x)=\{I_d\oplus M: M\in\R^{(n-d)\times d},\text{ the first } l \text{ columns of } M \text{ are } (\partial_1g_j(x))_{j=1}^{n-d},\ldots (\partial_lg_j(x))_{j=1}^{n-d}\}.
\end{equation*}

Thus, \eqref{eq:sub-Grass conclusion} holds with
\begin{equation*}
    v_i=\begin{bmatrix}
        e_i\\
        \partial_ig_j(x))_{j=1}^{n-d}
    \end{bmatrix}, 1\le i\le l
\end{equation*}
for elements of $\varphi_Q(\R^{(n-d)\times d})$; it remains to extend the conclusion to all of $\hat{H}(x)$.

Let $W_0\in \hat{H}(x)\setminus \varphi_Q(\R^{(n-d)\times d})$. Choose $(Q',P')\in H(x)$ such that $W_0=Q'\oplus \nabla P'(x_{Q'})$.


Choose $W\in \hat{H}(x)\cap \varphi_Q(\R^{(n-d)\times d})\cap \varphi_{Q'}(\R^{(n-d)\times d})$ and an open set $U\subset \varphi_Q(\R^{(n-d)\times d})\cap \varphi_{Q'}(\R^{(n-d)\times d})$. By our prior reasoning,
\begin{equation*}
    \hat{H}^{Q}(x)=\{W\in \varphi_{Q'}(\R^{(n-d)\times d}): v_1,\ldots,v_{l}\in W\}
\end{equation*}
and
\begin{equation*}
    \hat{H}^{Q'}(x)=\{W\in \varphi_{Q'}(\R^{(n-d)\times d}): v_1',\ldots,v_{l'}'\in W\}
\end{equation*}
for $v_1,\ldots,v_{l},v_1',\ldots,v_{l'}'\in \R^n$.

Since both sets must agree on $U$, the span of $v_1,\ldots,v_{l}$ is equal to the span of $v_1',\ldots,v_{l'}'$ and
\begin{equation*}
    \hat{H}^{Q}(x)=\{W\in \varphi_{Q'}(\R^{(n-d)\times d}): v_1,\ldots,v_{l}\in W\}.
\end{equation*}

Since $W_0$ and $Q'$ were arbitrary,
\begin{equation*}
    \hat{H}(x)=\{W\in \Gr(n,d): v_1,\ldots,v_{l}\in W\}.
\end{equation*}

One may take $v_1,\ldots,v_l$ to be orthonormal by the Gram-Schmidt process.





\end{proof}

\begin{lemma}\label{lemma:equi-LC}
    Let $E\subset\R^n$ be compact and let $(H(x))_{x\in E}$ be a nontrivial, Glaeser-stable, proper, consistent M-bundle. Then, $(\hat{H}(x))_{x\in E}$ is equi-$LC^{d-1}$.
\end{lemma}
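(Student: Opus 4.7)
The plan is to exploit Lemma \ref{lemma:sub-Grassmannians}: each fiber $\hat{H}(x)$ is the set of $d$-planes in $\R^n$ containing a fixed orthonormal system $v_1^x,\ldots,v_{l(x)}^x$. The key geometric observation is that in the standard affine chart on $\Gr(n,d)$ centered at any chosen base point, every such sub-Grassmannian appears as the trace of an affine subspace of the coordinate space $\R^{(n-d)\times d}$. Intersecting with a Euclidean ball in the chart therefore yields a convex (hence contractible) set, providing the required uniform local contractibility, and the bound $d-1$ will in fact not even be sharp.

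Concretely, fix $W_0 \in \bigcup_{x\in E} \hat{H}(x)$ and an open $U\ni W_0$. After a rotation of $\R^n$ we may assume $W_0 = \Col\begin{bmatrix} I_d \\ 0 \end{bmatrix}$, and consider the chart $\varphi:\R^{(n-d)\times d} \to \Gr(n,d)$ defined by $\varphi(M) = I_n \oplus M$, using the $\oplus$ operator from Section \ref{sec:mfld GR}. This is a homeomorphism onto an open set $\mathcal{O}\ni W_0$ satisfying $\varphi(0) = W_0$. Decomposing each $v_i^x = (a_i^x, b_i^x) \in \R^d \times \R^{n-d}$, the condition $v_i^x \in \varphi(M)$ is equivalent to the linear equation $M a_i^x = b_i^x$, so
\begin{equation*}
    \varphi^{-1}\bigl(\hat{H}(x) \cap \mathcal{O}\bigr) = \{M \in \R^{(n-d)\times d} : M a_i^x = b_i^x,\ 1 \le i \le l(x)\}
\end{equation*}
is an affine subspace of $\R^{(n-d)\times d}$ for every $x \in E$, independently of whether $W_0$ itself lies in $\hat{H}(x)$.

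Now choose $r > 0$ small enough that $V := \varphi(B_r(0)) \subset U$, where $B_r(0)$ is the Euclidean ball of radius $r$ in $\R^{(n-d)\times d}$. Then $V$ is an open neighborhood of $W_0$ in $\Gr(n,d)$, and for any $x\in E$ the preimage $\varphi^{-1}(\hat{H}(x)\cap V)$ is the intersection of an affine subspace with an open Euclidean ball: either empty or convex (hence contractible). Pulling back through $\varphi$, the set $\hat{H}(x)\cap V$ is itself either empty or contractible, so any continuous map $S^m \to \hat{H}(x)\cap V$ is null-homotopic within $\hat{H}(x)\cap V \subset \hat{H}(x)\cap U$ for every $m$, and in particular for $m \le d-1$.

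The only subtlety to watch is the case $W_0 \notin \hat{H}(x)$ with $\hat{H}(x)\cap V$ nonempty: here the relevant linear system $M a_i^x = b_i^x$ is inhomogeneous and the affine subspace does not pass through the chart origin. Nevertheless, an inhomogeneous consistent linear system still cuts out an affine subspace, and the intersection of any affine subspace with an open ball is convex, so the contractibility argument proceeds without modification. This is the point that most obviously needed attention, and it is neatly handled by the uniform affine-subspace description coming from Lemma \ref{lemma:sub-Grassmannians}.
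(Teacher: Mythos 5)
Your argument is essentially the paper's: pass to the affine chart centered at $W_0$, observe that each fiber meets the chart in an affine subspace of $\R^{(n-d)\times d}$, and intersect with a small ball to get a convex, hence contractible, set; you even treat explicitly the fibers with $W_0\notin\hat{H}(x)$, a case the paper's write-up glosses over. The one point to repair is your appeal to Lemma \ref{lemma:sub-Grassmannians}: that lemma is stated and proved only for the particular M-bundle $(H_{l^*}(x))_{x\in E}$ obtained by refining \eqref{eq:def H_0}, whereas Lemma \ref{lemma:equi-LC} is asserted for an arbitrary nontrivial, Glaeser-stable, proper, consistent M-bundle, so the description $\hat{H}(x)=\{W\in\Gr(n,d): v_1^x,\ldots,v_{l(x)}^x\in W\}$ is not available in the stated generality (and need not hold for a general consistent M-bundle; only a chartwise affine structure is guaranteed). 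The fix is to use only the weaker fact your argument actually needs, which follows from consistency alone and is what the paper extracts from the \emph{proof} of Lemma \ref{lemma:sub-Grassmannians}: for any $Q\in O(n)$, consistency together with Remark \ref{rmk:only remark (so far)} gives $\hat{H}(x)\cap \varphi_Q(\R^{(n-d)\times d})=\{Q\oplus\nabla P(x_Q):P\in H^Q(x)\}$, and since $H^Q(x)$ is an affine space and $P\mapsto\nabla P(x_Q)$ is linear, $\varphi_Q^{-1}(\hat{H}(x))$ is an affine subspace of $\R^{(n-d)\times d}$ (possibly empty). With that substitution your convexity and contractibility argument goes through verbatim, and, as you note, it yields equi-$LC^{k}$ for every $k$, not just $k=d-1$.
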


\begin{proof}
Let $W_0\in \Gr(n,d)$ and let $U\ni W_0$ be an open subset of $\Gr(n,d)$. Choose $Q\in O(n)$ such that $W_0=Q(\R^d)=Q\oplus 0$.

As in the proof of Lemma \ref{lemma:sub-Grassmannians}, we have a coordinate chart $\varphi_Q:\R^{(n-d)\times d}\to \Gr(n,d)$. Pick $V=\varphi_Q(B_{(n-d)\times d}(0,\delta))$ where $\delta>0$ is chosen small enough that $B_{(n-d)\times d}(0,\delta)\subset \varphi_Q^{-1}(U)$.

Let $x\in E$ such that $W_0\in \hat{H}(x)$. We would like to show that every continuous image of $S^m$ ($m\le n$) in $\hat{H}(x)\cap V$ is contractible in $\hat{H}(x)\cap U$. Since $\varphi_Q$ is a homeomorphism and $V\subset U$, it suffices to show that every continuous image of $S^m$ ($m\le n$) in $\varphi_Q^{-1}(\hat{H}(x))\cap B_{(n-d)\times d}(0,\delta))$ is contractible in $\varphi_Q^{-1}(\hat{H}(x))\cap B_{(n-d)\times d}(0,\delta))$. Again returning to the proof of Lemma \ref{lemma:sub-Grassmannians}, we know that $\varphi_Q^{-1}(\hat{H}(x))$ is an affine space; thus the desired property holds for its intersection with a ball.
\end{proof}

Totalling the results of this section we have removed the second condition from Theorem \ref{thm:main} in the case $d=1$ and proven the following: 

\begin{theorem}\label{thm:easy extension}
    Fix positive integers $m,n$ and let $E\subset \R^n$ be compact. Let $\mathcal{H}_{l^*}=(H_{l^*}(x))_{x\in E}$ be the $l^*$-th Glaeser refinement of \eqref{eq:def H_0}.
    
    Then there exists a $C^m$, 1-dimensional manifold $\M$ such that $E\subset\mathcal{M}\subset\R^n$ if and only if $\mathcal{H}_{l^*}$ is nontrivial.

Furthermore, if $\Pi\in \hat{H}_{l^*}(x)$ and $\widehat{\mathcal{H}_{l^*}}$ has a section $f$ such that $f(x)=\Pi$, then for all $(Q,P)\in H_{l^*}(x)$ satisfying $Q\oplus \nabla P(x_Q)=\Pi$, one may take $\mathcal{M}$ such that
\begin{equation*}
    \mathcal{M}\cap B(x,\delta)=G_Q(F)\cap B(x,\delta)
\end{equation*}
for some $\delta>0$ and $F\in C^m(\R^1,\R^{n-1})$ satisfying $J_{x_Q}F=P$.
\end{theorem}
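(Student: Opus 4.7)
The plan is to reduce Theorem \ref{thm:easy extension} to Theorem \ref{thm:main} by showing that in the case $d=1$ the topological condition of Theorem \ref{thm:main} is always satisfied. The converse direction (manifold extension implies nontrivial $\mathcal{H}_{l^*}$) will follow directly from Lemma \ref{lemma:forward direction}, so the real task is to produce a continuous section of $\widehat{\mathcal{H}_{l^*}}$ whenever $\mathcal{H}_{l^*}$ is nontrivial. To do this I will apply Michael's selection theorem (Theorem \ref{thm:Michael II}) with $X = E$, $Y = \Gr(n,1)$, $\varphi(x) = \hat{H}_{l^*}(x)$, $\mathcal{S} = \{\hat{H}_{l^*}(x) : x \in E\}$, and $D = 0$; for the furthermore clause I will take $A = \{x\}$ with the prescribed value $f(x) = \Pi$, and for bare existence $A = \emptyset$.

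Three of the four hypotheses of Theorem \ref{thm:Michael II} will fall out of results already established: $E$ is compact metric, hence paracompact, and $\Gr(n,1) \cong \mathbb{RP}^{n-1}$ is a complete metric space; lower semi-continuity of $\varphi$ is Lemma \ref{lemma:lsc}; the equi-$LC^0$ property comes from Lemma \ref{lemma:equi-LC} specialized to $d = 1$; and the dimension hypothesis $\dim_X(X \setminus A) \le D + 1 = 1$ is bounded by $\dim E$, which is at most $1$ by Lemma \ref{lemma:dim le d}.

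The remaining input, and the crux of the argument, is that each fiber $\hat{H}_{l^*}(x)$ be $0$-connected. Here I will invoke Lemma \ref{lemma:sub-Grassmannians}, which identifies the fiber with $\Gr(n-l, 1-l)$ for some integer $l \in \{0, 1\}$: when $l = 1$ the fiber is a single point, and when $l = 0$ the fiber is $\Gr(n,1) \cong \mathbb{RP}^{n-1}$, which is path-connected (with $n \ge 2$ forced by the standing assumption $d < n$). With these hypotheses in hand, Theorem \ref{thm:Michael II} furnishes the desired continuous section of $\widehat{\mathcal{H}_{l^*}}$, and Theorem \ref{thm:main} then delivers the $C^m$, $1$-dimensional compact manifold with boundary containing $E$, including the local graph description in the furthermore clause.

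The main obstacle is conceptual rather than technical: the argument mostly chains together previously proved lemmas, and its real content is the observation that the same strategy breaks down for $d \ge 2$. There one would need $(d-1)$-connectedness of each fiber, but fibers such as $\Gr(n,d)$ itself can fail to be $1$-connected (for instance $\pi_1(\Gr(n,1)) = \mathbb{Z}/2$ for $n \ge 2$), which is why the topological condition in Theorem \ref{thm:main} cannot be discarded in higher dimensions. The $d = 1$ case escapes this obstruction precisely because Michael's theorem only requires $0$-connectedness, which Lemma \ref{lemma:sub-Grassmannians} supplies for free.
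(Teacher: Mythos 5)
Your proposal follows essentially the same route as the paper: it verifies the hypotheses of Theorem \ref{thm:Michael II} via Lemmas \ref{lemma:dim le d}, \ref{lemma:lsc}, \ref{lemma:equi-LC}, and \ref{lemma:sub-Grassmannians} (the fibers being points or copies of $\Gr(n,1)\cong\mathbb{RP}^{n-1}$, hence $0$-connected), and then feeds the resulting section of $\widehat{\mathcal{H}_{l^*}}$ into Theorem \ref{thm:main}, with the converse given by Lemma \ref{lemma:forward direction}. The only deviation is your use of $A=\{x\}$ in Michael's theorem to prescribe $f(x)=\Pi$ for the furthermore clause, which is a harmless (indeed slightly stronger) variant of the paper's choice $A=\emptyset$, since the theorem as stated already hypothesizes such a section.
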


\begin{proof}
    We are in nearly in position to apply Theorem \ref{thm:main}; all that is left is to show that $(\hat{H}_{l^*}(x))_{x\in E}$ has a section. To do this, we would like to apply Theorem \ref{thm:Michael II} in the case where $X=E, A=\emptyset, D=d-1, Y=\Gr(n,d)$, and $\phi:x\mapsto \hat{H}(x)$.

    By Lemma \ref{lemma:sub-Grassmannians}, each $\hat{H}_{l^*}(x)$ is homeomorphic to either $\Gr(n,1)$ (which is in turn homeomorphic to the real projective space, $\mathbb{RP}^{n-1}$) or $\Gr(n,0)$ (a point). In either case, $\hat{H}_{l^*}(x)$ is path-connected so $S^0$ is contractible in it. Therefore, every $\hat{H}_{l^*}(x)$ is 0-connected.

    The remainder of the hypotheses are satisfied directly by Lemmas \ref{lemma:dim le d}, \ref{lemma:lsc}, and \ref{lemma:equi-LC}.

\end{proof}

Theorem \ref{thm:Michael II} only provides sufficient conditions for a selection to exist. There are plenty of circumstances in which there is a manifold extension and the hypotheses of Theorem \ref{thm:Michael II} do not apply, for instance, if $E$ is already a manifold of dimension at least 2. It may be interesting to determine necessary and sufficient conditions for $(\hat{H}(x))_{x\in E}$ to have a section.\\

\noindent \textbf{Acknowledgments.} The author would like to thank Allen Hatcher for helpful communications regarding this work and Shmuel Weinberger for comments on an earlier draft.

\bibliographystyle{plain}
\bibliography{Whitney-bib}

\begin{thebibliography}{10}

\bibitem{OpenProbsList}
Second workshop on whitney problems open problems list.
\newblock \url{https://cms-math.net.technion.ac.il/open-problems-whitney/}.
\newblock Accessed: 2023-07-24.

\bibitem{BMP03}
Edward Bierstone, Pierre~D. Milman, and Wiesław Pawłucki.
\newblock Differentiable functions defined in closed sets. {A} problem of {W}hitney.
\newblock {\em Invent. Math.}, 151(2):329--352, 2003.

\bibitem{BS01}
Yuri Brudnyi and Pavel Shvartsman.
\newblock {W}hitney's extension problem for multivariate {${C}^{1,\omega}$}-functions.
\newblock {\em Trans. Amer. Math. Soc.}, 353(6):2487--2512 (electronic), 2001.

\bibitem{charalambous2019dimension}
Michael~G. Charalambous.
\newblock {\em Dimension theory}, volume~7 of {\em Atlantis Studies in Mathematics}.
\newblock Springer, Cham, 2019.
\newblock A selection of theorems and counterexamples.

\bibitem{F05-J}
Charles Fefferman.
\newblock A generalized sharp {W}hitney theorem for jets.
\newblock {\em Rev. Mat. Iberoam.}, 21(2):577--688, 2005.

\bibitem{F05-Sh}
Charles Fefferman.
\newblock A sharp form of {W}hitney's extension theorem.
\newblock {\em Ann. of Math. (2)}, 161(1):509--577, 2005.

\bibitem{F06}
Charles Fefferman.
\newblock {W}hitney's extension problem for {$ {C^m} $}.
\newblock {\em Ann. of Math. (2)}, 164(1):313--359, 2006.

\bibitem{F09-Int}
Charles Fefferman.
\newblock {W}hitney's extension problems and interpolation of data.
\newblock {\em Bull. Amer. Math. Soc. (N.S.)}, 46(2):207--220, 2009.

\bibitem{FI20-book}
Charles Fefferman and Arie Israel.
\newblock {\em Fitting Smooth Functions to Data}.
\newblock {CBMS} Regional Conference Series in Mathematics. American Mathematical Society, 2020.

\bibitem{Intrinsic1}
Charles Fefferman, Sergei Ivanov, Yaroslav Kurylev, Matti Lassas, and Hariharan Narayanan.
\newblock Reconstruction and interpolation of manifolds. {I}: {T}he geometric {W}hitney problem.
\newblock {\em Found. Comput. Math.}, 20(5):1035--1133, 2020.

\bibitem{Intrinsic2}
Charles Fefferman, Sergei Ivanov, Matti Lassas, and Hariharan Narayanan.
\newblock Reconstruction of a {R}iemannian manifold from noisy intrinsic distances.
\newblock {\em SIAM J. Math. Data Sci.}, 2(3):770--808, 2020.

\bibitem{fefferman2022fitting}
Charles Fefferman, Sergei Ivanov, Matti Lassas, and Hariharan Narayanan.
\newblock Fitting a manifold of large reach to noisy data, 2022.

\bibitem{FL14}
Charles Fefferman and Garving~K. Luli.
\newblock The {B}renner-{H}ochster-{K}ollár and {W}hitney problems for vector-valued functions and jets.
\newblock {\em Rev. Mat. Iberoam.}, 30(3):875--892, 2014.

\bibitem{fefferman2016testing}
Charles Fefferman, Sanjoy Mitter, and Hariharan Narayanan.
\newblock Testing the manifold hypothesis.
\newblock {\em J. Amer. Math. Soc.}, 29(4):983--1049, 2016.

\bibitem{G58}
Georges Glaeser.
\newblock Étude de quelques algèbres tayloriennes.
\newblock {\em J. Analyse Math.}, 6:1--124, 1958.

\bibitem{greenblatt2015introduction}
Charlotte Greenblatt.
\newblock An introduction to transversality.
\newblock {\em Accessed online at http://schapos. people. uic. edu/MATH549\_Fall2015\_files}, page~1, 2015.

\bibitem{Lebesgue}
Henri Lebesgue.
\newblock Sur la non-applicabilit\'{e} de deux domaines appartenant respectivement \`a des espaces \`a {$n$} et {$n+p$} dimensions.
\newblock {\em Math. Ann.}, 70(2):166--168, 1911.

\bibitem{lee2012smooth}
John~M. Lee.
\newblock {\em Introduction to smooth manifolds}, volume 218 of {\em Graduate Texts in Mathematics}.
\newblock Springer, New York, second edition, 2013.

\bibitem{michael1956continuous}
Ernest Michael.
\newblock Continuous selections ii.
\newblock {\em Annals of mathematics}, 64(3):562--580, 1956.

\bibitem{stein1993harmonic}
Elias~M. Stein.
\newblock {\em Harmonic analysis: real-variable methods, orthogonality, and oscillatory integrals}, volume~43 of {\em Princeton Mathematical Series}.
\newblock Princeton University Press, Princeton, NJ, 1993.
\newblock With the assistance of Timothy S. Murphy, Monographs in Harmonic Analysis, III.

\bibitem{thom}
Ren\'{e} Thom.
\newblock Quelques propri\'{e}t\'{e}s globales des vari\'{e}t\'{e}s diff\'{e}rentiables.
\newblock {\em Comment. Math. Helv.}, 28:17--86, 1954.

\bibitem{W34-2}
Hassler {W}hitney.
\newblock Differentiable functions defined in closed sets. {I}.
\newblock {\em Trans. Amer. Math. Soc.}, 36(2):369--387, 1934.

\end{thebibliography}

\end{document}